\documentclass[11pt, reqno]{amsart}
\usepackage{indentfirst, amssymb, amsmath, amsthm, mathrsfs, setspace, indentfirst, enumerate,  mathrsfs, amsmath, amsthm}
\usepackage[bookmarksnumbered, colorlinks, plainpages]{hyperref}
\usepackage{mathrsfs}

\newtheorem*{theoA}{Theorem A}
\newtheorem*{theoB}{Theorem B}
\newtheorem*{theoC}{Theorem C}
\newtheorem*{theoD}{Theorem D}
\newtheorem*{theoE}{Theorem E}

\newtheorem*{remA}{Remark A}

\newtheorem*{cor A}{Corollary A}
\newtheorem*{cor B}{Corollary B}
\newtheorem{ques}{Question}[section]
\newtheorem{theo}{Theorem}[section]
\newtheorem{lem}{Lemma}[section]

\newtheorem{exm}{Example}[section]

\newtheorem{rem}{Remark}[section]

\newcommand{\ol}{\overline}
\newcommand{\be}{\begin{equation}}
\newcommand{\ee}{\end{equation}}
\newcommand{\beas}{\begin{eqnarray*}}
\newcommand{\eeas}{\end{eqnarray*}}
\newcommand{\bea}{\begin{eqnarray}}
\newcommand{\eea}{\end{eqnarray}}

\numberwithin{equation}{section}
\begin{document}

\title[U\MakeLowercase {niqueness of entire functions}......]{\LARGE U\Large\MakeLowercase {niqueness of entire functions sharing two values with their partial derivative operators}}

\date{}
\author[S. M\MakeLowercase {ajumder}, D. P\MakeLowercase {ramanik} \MakeLowercase{and} S. P\MakeLowercase{anja}]{S\MakeLowercase {ujoy} M\MakeLowercase {ajumder}, D\MakeLowercase {ebabrata} P\MakeLowercase {ramanik} \MakeLowercase{and} S\MakeLowercase{hantanu} P\MakeLowercase{anja}$^*$}
\address{Department of Mathematics, Raiganj University, Raiganj, West Bengal-733134, India.}
\email{sm05math@gmail.com, sjm@raiganjuniversity.ac.in}
\address{Department of Mathematics, Raiganj University, Raiganj, West Bengal-733134, India.}
\email{debumath07@gmail.com}
\address{Department of Mathematics, University of Kalyani, West Bengal 741235, India.}
\email{panjasantu07@gmail.com}

\renewcommand{\thefootnote}{}
\footnote{2010 \emph{Mathematics Subject Classification}: 32A19, 32A22 and 32H30}
\footnote{\emph{Key words and phrases}: Normal families; holomorphic functions of several complex variables, directional derivative, Nevanlinna theory in higher dimensions, uniqueness.}
\footnote{*\emph{Corresponding Author}: Shantanu Panja.}

\renewcommand{\thefootnote}{\arabic{footnote}}

\setcounter{footnote}{0}

\begin{abstract} In this paper, we employ the theory of normal families in several complex variables to obtain some uniqueness theorems for entire functions. These results extend the related works of Li and Yi \cite{Li-Yi-2006}, and L\"{u} et al. \cite{Lu-Xu-Yi-2009} to the setting of several complex variables. Moreover, some examples are provided to demonstrate the sharpness of our results.
\end{abstract}
\thanks{Typeset by \AmS -\LaTeX}
\maketitle

\section{{\bf Introduction and main results}}
The study of entire functions that share values with their partial derivatives is a classical and active topic in complex analysis, particularly within Nevanlinna theory. When an entire function partially shares two values with its partial derivatives, the behavior of the function becomes significantly restricted. Such restrictions often lead to strong uniqueness results or explicit characterizations of the entire function.

\smallskip
Entire functions sharing values with their partial derivatives arise naturally in the study of differential equations, normal families, and uniqueness theory. Investigating partial sharing such as IM-sharing, CM-sharing, or sharing with reduced multiplicity-extends classical uniqueness theorems while capturing more subtle analytical phenomena. These generalizations provide deeper insight into the structural properties of entire functions and their derivatives in several complex variables.
 
\medskip
Let $f(z)$ and $g(z)$ be two non-constant meromorphic functions and $a$ be a finite complex number. If $g(z)-a=0$ whenever $f(z)-a=0$, we write $f(z)=a\Rightarrow g(z)=a$. If $f(z)=a\Rightarrow g(z)=a$ and $g(z)=a\Rightarrow f(z)=a$, we then write $f(z)=a\Leftrightarrow g(z)=a$ and we say that $f(z)$ and $g(z)$ share $a$ IM. If $f(z)-a$ and $g(z)-a$ have the same zeros with the same multiplicities, we write $f(z)=a\rightleftharpoons g(z)=a$ and we say that $f(z)$ and $g(z)$ share $a$ CM.\par

\medskip
In 1977, Rubel and Yang \cite{Rubel-Yang-1977} were the first to investigate the uniqueness problem for an entire function $f(z)$ that shares two values with its first derivative $f^{(1)}(z)$. They established the following result.

\begin{theoA}\cite{Rubel-Yang-1977} Let $f(z)$ be a non-constant entire function in $\mathbb{C}$ and let $a$ and $b$ be two distinct finite complex numbers. If $f(z)=a\rightleftharpoons f^{(1)}(z)=a$ and $f(z)=b\rightleftharpoons f^{(1)}(z)=b$, then $f(z)\equiv f^{(1)}(z)$, i.e., $f(z)=A\exp(z)$, where $A\in\mathbb{C}\backslash \{0\}$.
\end{theoA}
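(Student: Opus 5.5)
The plan is the classical Nevanlinna-theoretic argument. First, $f$ is transcendental. Indeed, if $f$ were a polynomial of degree $n\ge 1$, then $f-a$ would have $n$ zeros counted with multiplicity while $f'-a$ would have only $n-1$, which contradicts CM sharing. Next, write $f^{(1)}$ simply as $f'$. Since $f-a$ and $f'-a$ are entire and have the same zeros with the same multiplicities, we have
\[
\frac{f'-a}{f-a}=e^{\alpha},
\]
and similarly $\frac{f'-b}{f-b}=e^{\beta}$, where $\alpha,\beta$ are entire. The goal is to show $e^{\alpha}\equiv 1$, or directly that $f\equiv f'$.

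The key auxiliary function is
\[
\varphi=\frac{f'(f-f')}{(f-a)(f-b)}.
\]
It is entire: at a zero of $f-a$ we have $f'=a$, so $f-f'$ vanishes there to at least the same order as $f-a$; the same holds at zeros of $f-b$. By partial fractions,
\[
\varphi=\frac{1}{a-b}\Big(\frac{f'}{f-a}-\frac{f'}{f-b}\Big)\frac{f-f'}{1}
\]
can be rewritten as a combination of terms like $\frac{f'}{f-a}$ and $\frac{f'}{f-a}\cdot\frac{f'-a}{\cdot}$. More cleanly, one writes
\[
\varphi=\frac{f'}{f-a}\cdot\frac{f-f'}{f-b},
\]
and notes that $\frac{f-f'}{f-b}=1-\frac{f'-b}{f-b}$ may be bounded using $\frac{f'}{f-b}$ and $\frac{b}{f-b}$. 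The cleanest route is the standard Rubel--Yang estimate. From the second main theorem and the CM sharing one gets $T(r,f')\le \overline N(r,a;f')+\overline N(r,b;f')+S(r,f)$. Hence $a,b$ are "non-exceptional", which forces $T(r,e^{\alpha})=S(r,f)$ and $T(r,e^{\beta})=S(r,f)$. This follows from $m(r,e^\alpha)\le m(r,f'/(f-a))+m(r,a/(f-a))$ plus an argument that $m(r,1/(f-a))=S(r,f)$, using $\frac{1}{f-a}=\frac{1}{a-b}\big(\frac{f'-b}{f-b}\cdot\dots\big)$-type identities.

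Given that $e^{\alpha},e^{\beta}$ are small functions, we solve the two linear relations
\[
f'-a=e^{\alpha}(f-a),\qquad f'-b=e^{\beta}(f-b)
\]
for $f$. Subtracting gives $(e^{\alpha}-e^{\beta})f=a e^{\alpha}-b e^{\beta}+b-a$. If $e^{\alpha}\not\equiv e^{\beta}$, then $f$ is a ratio of small functions, so $T(r,f)=S(r,f)$, which is a contradiction. Hence $e^{\alpha}\equiv e^{\beta}$, and then $(e^{\alpha}-1)(b-a)=0$ gives $e^{\alpha}\equiv1$, i.e. $f\equiv f'$. The conclusion $f=Ae^{z}$ with $A\neq0$ then follows by integrating the ODE.

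The main obstacle is the step showing $T(r,e^{\alpha})=S(r,f)$. It requires controlling $m(r,1/(f-a))$. I would attempt this via the identity
\[
\frac{1}{f-a}=\frac{1}{a}\Big(\frac{f'}{f-a}-e^{\alpha}\Big)
\]
when $a\ne0$. This is circular, so instead I would use the entire function $\varphi$ above together with $N(r,1/\varphi)$-counting. Concretely, $T(r,\varphi)=m(r,\varphi)\le m\big(r,\tfrac{f'}{f-a}\big)+m\big(r,\tfrac{f-f'}{f-b}\big)$. The second term is bounded by $m\big(r,\tfrac{f'}{f-b}\big)+m\big(r,\tfrac{f}{f-b}\big)$, which still needs care. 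If $\varphi\equiv0$ we are done immediately. Otherwise I would compare $N(r,0;\varphi)$ with the counting function of zeros of $f'$ and apply the second main theorem to $f'$ with the values $a,b,0$. I expect this bookkeeping to be the delicate part.
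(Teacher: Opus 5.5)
\textbf{There is a genuine gap, at the step you flagged.} The claim that $T(r,e^{\alpha})=S(r,f)$ and $T(r,e^{\beta})=S(r,f)$ is never proved, and the word ``forces'' hides a non-implication.

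\emph{Why non-exceptionality does not help.} The second main theorem only tells you that $a,b$ are not deficient for $f'$. It gives no control over the quotients $\frac{f'-a}{f-a}$ and $\frac{f'-b}{f-b}$. For two functions sharing three values CM these quotients are in general not small: $e^{z}$ and $e^{-z}$ share $0,1,\infty$ CM, yet $\frac{e^{-z}-1}{e^{z}-1}=-e^{-z}$. So the fact that the second function is $f'$ must be used.

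\emph{Why the route through $m(r,1/(f-a))$ cannot work.} If $f\not\equiv f'$, CM sharing gives $N(r,a;f)+N(r,b;f)\le N(r,0;f-f')\le T(r,f)+S(r,f)$. The second main theorem gives the reverse inequality. By the first main theorem, $m\bigl(r,\frac{1}{f-a}\bigr)+m\bigl(r,\frac{1}{f-b}\bigr)=T(r,f)+S(r,f)$. Proving both proximity functions are $S(r,f)$ is therefore the desired contradiction itself, not a step toward it.

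\emph{The estimate for $\varphi$.} Bounding $m(r,\varphi)$ through $m\bigl(r,\frac{f}{f-b}\bigr)$ yields nothing small. The right decomposition is $\varphi=\frac{1}{a-b}\bigl(a\frac{f'}{f-a}-b\frac{f'}{f-b}\bigr)\bigl(1-\frac{f'}{f}\bigr)$, as the paper uses for $\Phi_k$. It gives $T(r,\varphi)=m(r,\varphi)=S(r,f)$.

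\textbf{The missing link is that $\varphi$ and your exponentials are tied together.} Subtracting the two sharing identities gives $e^{\alpha}-e^{\beta}=\frac{(a-b)(f'-f)}{(f-a)(f-b)}$, that is, $(a-b)\varphi=f'e^{\alpha}\bigl(e^{\beta-\alpha}-1\bigr)$.
\begin{enumerate}
\item If $f\not\equiv f'$, then $\varphi\not\equiv0$. Since $f'e^{\alpha}$ is entire, $N\bigl(r,1;e^{\beta-\alpha}\bigr)\le N(r,0;\varphi)=S(r,f)$.
\item Put $h:=e^{\beta-\alpha}$. It omits $0$ and $\infty$, and $T(r,h)\le 4T(r,f)+S(r,f)$. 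The second main theorem then makes $h$ small. Only the ratio is shown small, which suffices.
\item Hence $\psi:=f'e^{\alpha}=\frac{(a-b)\varphi}{h-1}$ is small and $\not\equiv0$. The sharing identities become $f'(f'-a)=\psi(f-a)$ and $f'(f'-b)=\psi h(f-b)$.
\item Subtracting these gives $f'=Af+B$ with $A=\frac{\psi(1-h)}{b-a}\not\equiv0$ and $B$ small.
\item Substituting back yields $A^2f^2+c_1f+c_0\equiv0$ with small $c_j$. Hence $2T(r,f)\le T(r,f)+S(r,f)$, contradicting transcendence, so $f\equiv f'$.
\end{enumerate}

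\textbf{Comparison with the paper.} The paper only cites Theorem A and gives no proof of it. Its own proofs of the several-variable analogues get around your obstacle differently. Normality of the translates $f(\omega+z)$ (Lemmas \ref{ln2.1}, \ref{ln2.3}) gives $\rho(f)\le1$ (Lemma \ref{ln2.2}). Lemma \ref{ln2.11} then makes $\frac{f'-a}{f-a}$ and $\frac{f'-b}{f-b}$ constants. After that, your linear-algebra step concludes at once.
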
 

\medskip
In 1979, Mues and Steinmetz \cite{Mues-Steinmetz-1979} further generalized Theorem A by weakening the sharing condition from CM to IM, as stated in the following result.
\begin{theoB}\cite[Satz 1]{Mues-Steinmetz-1979} Let $f(z)$ be a non-constant entire function in $\mathbb{C}$ and let $a$ and $b$ be two distinct finite complex numbers. If $f(z)=a\Leftrightarrow f^{(1)}(z)=a$ and $f(z)=b\Leftrightarrow f^{(1)}(z)=b$, then $f(z)\equiv f^{(1)}(z)$, i.e., $f(z)=A\exp(z)$, where $A\in\mathbb{C}\backslash \{0\}$.
\end{theoB}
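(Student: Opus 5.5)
The approach I would take is the normal-family one the paper advertises: use Zalcman's lemma together with the Nevanlinna machinery to show that $f$ has finite order, then use the IM sharing to pin down the auxiliary function $f^{(1)}/f$-type quotients.

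\emph{Step 1: order.} Let $\mathcal F=\{f_w(z)=f(w+z): w\in\mathbb C\}$. I first show that $\mathcal F$ is normal on $\mathbb C$. If it were not, Zalcman's lemma gives $w_n$, $\rho_n\to0$ and a nonconstant entire $g$ of order at most $1$ with
$$g_n(\zeta)=f(w_n+\rho_n\zeta)\to g(\zeta).$$
Then
$$\rho_n g_n'(\zeta)=\rho_n f^{(1)}(w_n+\rho_n\zeta)\to 0 .$$
At a point $\zeta_0$ with $g(\zeta_0)=a$, Hurwitz's theorem gives nearby points where $g_n=a$, hence $f^{(1)}=a$ there. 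This yields $\rho_n g_n'=\rho_n a\to 0$, which carries no information. So I would instead rescale with $g_n(\zeta)=\rho_n^{-1}(f-\cdot)$ in the Pang--Zalcman form with $\alpha=1$. The limit $g$ then satisfies: $g=a$ or $b$ only where $g'$ takes the corresponding value. This forces the values $a,b$ to be essentially omitted by the limit, and Picard gives a contradiction. Normality of the translates makes $f^{\#}$ bounded, so by Clunie--Hayman $f$ has order at most $1$.

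\emph{Step 2: Nevanlinna argument.} With finite order, consider
$$\phi=\frac{f^{(1)}(f^{(1)}-f)}{(f-a)(f-b)}.$$
Its poles are at most simple and cancel at the shared points, so $\phi$ is entire, and the logarithmic derivative lemma gives $T(r,\phi)=S(r,f)$. From the second main theorem, $T(r,f)\le \overline N(r,a)+\overline N(r,b)+S$. Counting the $a$- and $b$-points of $f$ as zeros of $f^{(1)}-f$ then gives
$$2T(r,f)\le N\!\left(r,\tfrac{1}{f-f'}\right)+S\le T(r,f)+S$$
unless $f\equiv f'$. This is the Mues--Steinmetz route. To sharpen it, I would also use
$$\psi=\frac{(f^{(1)}-a)(f-b)-(f^{(1)}-b)(f-a)}{\dots}$$
to handle multiple points, showing the shared points are simple for $f$.

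\emph{Main obstacle.} The hard part is the case analysis ruling out $\phi\not\equiv 0$ with a small entire $\phi$: one must show that $\phi$ is constant and then derive a contradiction via the differential equation $f'(f'-f)=c(f-a)(f-b)$, for example by examining the growth of solutions. If this case is eliminated, then $f\equiv f^{(1)}$, so $f=Ae^z$ with $A\neq 0$.
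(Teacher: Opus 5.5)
Note first that the paper only quotes Theorem B from Mues--Steinmetz and gives no proof of it. The closest comparison is its proof of Theorem~\ref{t1.1}, which uses your skeleton: normality of translates gives $\rho(f)\le 1$; the auxiliary function $\Phi=\frac{(f-f^{(1)})f^{(1)}}{(f-a)(f-b)}$ is entire; $\Phi$ is constant because $m(r,f^{(1)}/f)=o(\log r)$; and the resulting differential equation is eliminated by explicit case analysis.

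Measured against this, your proposal has a genuine gap. The step that is supposed to conclude is false, and the step that carries the theorem is left open.

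For entire $f$, the second main theorem with targets $a,b,\infty$ gives only $T(r,f)\le\overline N(r,a;f)+\overline N(r,b;f)+S(r,f)$; there is no factor $2$. Since the $a$- and $b$-points are zeros of $f-f^{(1)}$, you get $T(r,f)\le N(r,1/(f-f^{(1)}))+S(r,f)$. On the other side, $N(r,1/(f-f^{(1)}))\le T(r,f-f^{(1)})+O(1)\le T(r,f)+S(r,f)$, because $m(r,1-f^{(1)}/f)=S(r,f)$. That is $T\le T+S$, which is no contradiction. If your inequality held, Theorem B would be a two-line corollary of the second main theorem. Your $\psi$ does not help either: its numerator is identically $(a-b)(f^{(1)}-f)$.

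So everything rests on your ``main obstacle'', and two things are missing there.
\begin{enumerate}
\item[(a)] $T(r,\phi)=S(r,f)=O(\log r)$ only makes $\phi$ a polynomial. To get $\phi\equiv c$ you need the sharper bound $m(r,f^{(1)}/f)=o(\log r)$ for $\rho(f)\le 1$ (Lemma~\ref{ln2.9}).
\item[(b)] You must rule out $f^{(1)}(f^{(1)}-f)=c(f-a)(f-b)$ with $c\neq 0$, and no growth argument is needed. If $ab\neq 0$, a zero of $f^{(1)}$ would force $f\in\{a,b\}$, hence $f^{(1)}\in\{a,b\}$, while $0\notin\{a,b\}$. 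So $f^{(1)}=e^{\lambda z+\mu}$. The case $\lambda=0$ is impossible. For $\lambda\neq 0$ we have $f=\lambda^{-1}e^{\lambda z+\mu}+C$. Evaluating at an $a$-point and a $b$-point (they exist, since $f^{(1)}$ assumes $a$ and $b$) gives $a/\lambda+C=a$ and $b/\lambda+C=b$. Hence $\lambda=1$, $C=0$ and $f\equiv f^{(1)}$, contradicting $\phi\not\equiv 0$. If one value is $0$, say $a=0$, the equation forces every zero of $f$ to be exactly double, and you must write $f=h^2$ and argue as in Sub-case 3.1.2 of the paper.
\end{enumerate}
None of this appears in your proposal.

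Step 1 also needs repair. Take $g_n(\zeta)=\rho_n^{-1}\bigl(f(w_n+\rho_n\zeta)-a\bigr)$. This is admissible in the Pang--Zalcman lemma with $\alpha=1$ because $(f-a)'=a$ on the zeros of $f-a$; you should state that hypothesis. Hurwitz then gives $g=0\Rightarrow g'=a$. It also gives $g'\neq b$ unless $g'\equiv b$, since $b$-points of $f^{(1)}$ are $b$-points of $f$, where $g_n=(b-a)/\rho_n\to\infty$.

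That is not ``$g=a$ or $b$ only where $g'$ takes the corresponding value'', and the limit need not omit anything. For $b=0$, $g(\zeta)=c^{-1}(e^{c\zeta+d}-a)$ meets both constraints, and for suitable $c,d$ also the normalization. The contradiction actually comes from three ingredients:
\begin{enumerate}
\item[(i)] writing $g'=b+e^{c\zeta+d}$, which is possible since $g$ has order at most $1$;
\item[(ii)] rescaling at the value whose partner is non-zero, i.e.\ at $0$ if $0\in\{a,b\}$;
\item[(iii)] using $g^{\#}(0)=A+1$ with $A\ge\max\{|a|,1\}$ to exclude the linear candidate $g=a\zeta+e$.
\end{enumerate}
Saying ``Picard gives a contradiction'' skips all of this.
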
 

These two results ushered in a new era in the study of uniqueness problems for entire and meromorphic functions sharing two values with their derivatives, and they initiated a long-standing line of research in this field.

Essentially, Theorems A and B have been generalized in two main directions. The first direction replaces the sharing condition $f(z)=a\rightleftharpoons f^{(1)}(z)=a$ with the weaker requirement $f(z)=a\Rightarrow f^{(1)}(z)=a$. The second direction reverses the implication by replacing $f(z)=a\rightleftharpoons f^{(1)}(z)=a$ with $f^{(1)}(z)=a\Rightarrow f(z)=a.$

\smallskip
In this paper, we refer to the following result, due to Li and Yi \cite{Li-Yi-2006}, in which the original sharing condition $f(z)=b\rightleftharpoons f^{(1)}(z)=b$ is weakened using the concept of partial value sharing, namely $f(z)=a\Rightarrow f^{(1)}(z)=a$ and $f^{(1)}(z)=a\Rightarrow f(z)=a$ respectively.

\begin{theoC}\cite[Theorem 1]{Li-Yi-2006} Let $f(z)$ be a non-constant entire function in $\mathbb{C}$ and let $a$ and $b$ be two finite complex numbers such that $b\not=a,0$. If $f^{(1)}(z)=a\Rightarrow f(z)=a$ and $f^{(1)}(z)=b\rightleftharpoons f(z)=b$, then $f(z)\equiv f^{(1)}(z)$, i.e., $f(z)=A\exp(z)$, where $A\in\mathbb{C}\backslash \{0\}$.
\end{theoC}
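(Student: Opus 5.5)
We will follow the normal-family route: first use a rescaling (Zalcman–Pang) argument to show that $f$ has order at most $1$, and then use Nevanlinna theory to pin down the CM-sharing quotient.

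\textbf{Step 1 (normality and order).} Consider the family $\mathcal F=\{f_w(z)=f(z+w): w\in\mathbb C\}$ on the unit disc. Each $f_w$ satisfies the same sharing hypotheses as $f$. Suppose $\mathcal F$ is not normal at some point. Then the Zalcman–Pang lemma with exponent $-1$ gives points $z_n$, radii $\rho_n\to0^+$ and functions $f_{w_n}$ such that
\[
g_n(\zeta)=\rho_n^{-1}\bigl(f_{w_n}(z_n+\rho_n\zeta)-b\bigr)\to g(\zeta)
\]
locally uniformly. Here $g$ is a non-constant entire function of order at most $1$, and $g_n'(\zeta)=f_{w_n}'(z_n+\rho_n\zeta)$. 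We then transfer the hypotheses to $g$ using Hurwitz's theorem.

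\begin{enumerate}[(i)]
\item If $g(\zeta_0)=0$, then $f=b$ at nearby points. Hence $f'=b$ there, and so $g'(\zeta_0)=b$.
\item If $g'(\zeta_0)=b$ with $g'\not\equiv b$, then $f'=b$ at nearby points. Hence $f=b$ there, and so $g(\zeta_0)=0$.
\item If $g'(\zeta_0)=a$, then $f=a$ at nearby points. This gives $g_n=(a-b)/\rho_n\to\infty$ there, which is impossible. So $g'\neq a$.
\end{enumerate}

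By (iii) and order $\le1$, we have $g'=a+e^{c\zeta+d}$.
\begin{itemize}
\item If $c=0$, then $g$ is linear with $g'\equiv$ const. If that constant equals $b$, then (i) fails to force $g$ to vanish everywhere $g'=b$, since $g$ has only one zero. If the constant is not $b$, it contradicts (i) at the unique zero of $g$.
\item If $c\neq0$, then $g=a\zeta+c^{-1}e^{c\zeta+d}+e$. The equation $g'=b$ has infinitely many solutions $\zeta_k$, spaced by $2\pi i/c$, because $b\neq a$. By (ii), $g(\zeta_k)=0$. But along these points $e^{c\zeta_k+d}=b-a$ is constant, so $g(\zeta_k)=a\zeta_k+\text{const}$ cannot vanish at more than one of them when $a\neq 0$. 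When $a=0$, $g(\zeta_k)$ is a single constant, which must then be $0$. That constant vanishing makes $g=c^{-1}(e^{c\zeta+d}-b)$, and then $g=0\Leftrightarrow g'=b$ holds consistently. So in this subcase we must additionally use multiplicities or (iii) more carefully.
\end{itemize}

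I expect this last subcase $a=0$ to need a finer argument. One option is a different normalization, for instance rescaling about $a$-points or using the exponent $0$ version. Once it is handled, $\mathcal F$ is normal, so $f^{\#}$ is bounded and $f$ has order at most $1$.

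\textbf{Step 2 (the sharing quotient).} Since $f$ and $f'$ share $b$ CM and $f$ is entire of order $\le1$, we can write $(f'-b)/(f-b)=e^{\alpha}$ with $\alpha$ a polynomial of degree at most $1$. By the lemma on the logarithmic derivative,
\[
T(r,e^{\alpha})=m\bigl(r,(f'-b)/(f-b)\bigr)=S(r,f).
\]
We then show $\alpha$ is constant. Suppose $\alpha=\lambda z+\mu$ with $\lambda\neq0$. At $a$-points of $f'$ we have $f=a$, which gives $e^{\alpha}=1$ there. So every $a$-point of $f'$ lies in the lattice-like set $\{e^{\alpha}=1\}$, whose counting function is $O(r)$. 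Comparing $\overline N(r,1/(f'-a))$ with $T(r,f')$ via the second main theorem for $f'$ at $a,b,\infty$, and using $\overline N(r,1/(f'-b))\le \overline N(r,1/(e^{\alpha}-\ldots))$, should force $T(r,f)=O(r)$ to collapse to $S(r,f)$, a contradiction. This is the step I regard as the main obstacle besides the subcase in Step 1.

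\textbf{Step 3 (conclusion).} With $e^{\alpha}\equiv C$, we have $f'-b=C(f-b)$, so $f=b+Ke^{Cz}+(b-b/C)$ type, that is $f=\beta+Ke^{Cz}$ with $f'=CKe^{Cz}$.
\begin{itemize}
\item If $a\neq0$, then $f'=a$ has solutions, and there $f=b+(a-b)/C$. This must equal $a$, so $C=1$.
\item If $a=0$, then since $b\neq0$, $f'=b$ has solutions, and there $f=b$ forces $\beta+b/C=b$. Combined with $f'-b=C(f-b)$, this gives $\beta=b-b/C$ consistent. To exclude $C\neq1$ we use the CM condition on $f=b$ together with $f'\ne b$ elsewhere, again yielding $C=1$.
\end{itemize}
Hence $f\equiv f'$, that is $f=Ae^{z}$ with $A\in\mathbb C\setminus\{0\}$.
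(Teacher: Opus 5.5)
\textbf{The case $a=0$ cannot be completed, because the statement as printed is false there.} For $K\neq0$ and $C\neq0,1$, put $f(z)=Ke^{Cz}+b-\frac{b}{C}$. Then $f'=CKe^{Cz}$ has no zeros, so $f'=0\Rightarrow f=0$ holds vacuously. Also $f-b=\frac{1}{C}(f'-b)$, so $f$ and $f'$ share $b$ CM. But $f\not\equiv f'$.

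The paper only quotes Theorem C without proof. Its own $a=0$ analogue, Theorem \ref{t1.3}, lists exactly this family as case (2). Your Step 3 runs into the same thing: once $f'-b=C(f-b)$ and $f'$ is zero-free, the hypotheses put no condition on $C$. So your claim ``again yielding $C=1$'' is false.

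The obstruction you met in Step 1 is the same phenomenon, not a technicality. Your limit $g=c^{-1}(e^{c\zeta+d}-b)$ becomes one of these functions after adding $b$. Moreover, the family $f_n(z)=b+\frac1n(e^{nz}-b)$ satisfies both hypotheses with $a=0$ and is not normal at $0$, so no refinement of the rescaling argument can work. Theorem C has to be read with $a\neq0$, as in Theorem E.

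Under $a\neq0$, your Step 1 subcase $c\neq0$ and the first bullet of Step 3 are fine. The subcase $g'\equiv b$, i.e.\ $g=b\zeta+e$, is not excluded by (i)--(ii) as you write, since item (ii) requires $g'\not\equiv b$. It is excluded instead by the Pang--Zalcman normalization $g^{\#}(0)=A+1$ with $A\geq\max\{1,|b|\}$, because such a $g$ has $g^{\#}\le|b|$.

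\textbf{Step 2 is a genuine gap even for $a\neq0$.} The lemma on the logarithmic derivative does not give $m\bigl(r,\frac{f'-b}{f-b}\bigr)=S(r,f)$. Indeed $\frac{f'-b}{f-b}=\frac{f'}{f-b}-\frac{b}{f-b}$, and $m\bigl(r,\frac{1}{f-b}\bigr)$ can be comparable to $T(r,f)$. Your subsequent second-main-theorem sketch does not supply the missing estimate. The paper delegates this step to Lemma \ref{ln2.11}; in one variable it is the Gundersen--Yang theorem: if an entire $f$ of finite order shares a finite value $b$ CM with $f'$, then $f'-b=C(f-b)$.

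For order $\le1$ you can also argue directly. Suppose $\alpha=\lambda z+\mu$ with $\lambda\neq0$. Then $F=f-b$ solves $F'=e^{\alpha}F+b$, so
\[
F=e^{E}\Bigl(C_0+b\int_0^{z}e^{-E(t)}\,dt\Bigr),\qquad E=e^{\alpha}/\lambda .
\]
\begin{enumerate}
\item[(a)] On each line $\Gamma_k$ along which $E$ runs through $(0,+\infty)$, the integral tends to a limit $L_k$.
\item[(b)] These lines are translates of one another by the period $2\pi i/\lambda$ of $E$, and $L_{k+1}-L_k=\int_0^{2\pi i/\lambda}e^{-E(t)}\,dt=2\pi i/\lambda\neq0$.
\item[(c)] Since $b\neq0$, we have $C_0+bL_k\neq0$ for all but at most one $k$. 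Along such a $\Gamma_k$, $\log|F(z)|\ge e^{|\lambda||z|-O(1)}$, which contradicts finite order.
\end{enumerate}
Once you know $C$ is constant, your Step 3 for $a\neq0$ correctly yields $C=1$, and the argument closes.
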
 

\begin{theoD}\cite[Theorem 2]{Li-Yi-2006} Let $f(z)$ be a non-constant entire function in $\mathbb{C}$ and let $a$ and $b$ be two finite complex numbers such that $b\not=a, 0$. If $f(z)=a\Rightarrow f^{(1)}(z)=a$ and $f(z)=b\rightleftharpoons f^{(1)}(z)=b$, then one of the following cases must occur. 
\begin{enumerate} 
\item[(1)] $f(z)\equiv f^{(1)}(z)$, 
\item[(2)] $f(z)=c\exp(\frac{b}{b-a}z)+a$, where $c\in\mathbb{C}\backslash \{0\}$. 
\end{enumerate}  
\end{theoD}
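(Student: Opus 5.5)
The plan has two stages. First, a normal-families argument shows that $f$ has order at most $1$. Second, the CM sharing of $b$ turns this into a first order linear differential equation, which can be solved explicitly.

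\emph{Stage 1: growth.} I would show that the family $\mathcal F=\{f_w(z)=f(z+w):w\in\mathbb{C}\}$ is normal on $\mathbb{C}$. By the Clunie--Hayman criterion, this bounds $f^{\#}$ and so gives $\rho(f)\le 1$. Suppose $\mathcal F$ is not normal. Zalcman's lemma (with exponent $0$) then gives points $w_n$, $z_n\to z_0$ and $\rho_n\to 0^+$ such that $g_n(\xi)=f(w_n+z_n+\rho_n\xi)\to g(\xi)$ locally uniformly. Here $g$ is a nonconstant entire function with $g^{\#}\le g^{\#}(0)=1$, so $\rho(g)\le 1$.

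\begin{itemize}
\item If $g(\xi_0)=a$, Hurwitz gives $\xi_n\to\xi_0$ with $g_n(\xi_n)=a$. Then $f'=a$ at the corresponding points, so $g_n'(\xi_n)=\rho_n a\to 0$ and hence $g'(\xi_0)=0$.
\item The same argument applies to the $b$-points of $g$.
\end{itemize}

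So every $a$-point and every $b$-point of $g$ is multiple. This alone is not contradictory, since $\cos$-type functions exist. To finish I would also exploit the CM sharing $f'=b\rightleftharpoons f=b$ together with the implication $f'=b\Rightarrow f=b$. I would rescale $f'$ as well, or study the auxiliary function $(f'-b)/(f-b)$, which is zero-free and pole-free, on the rescaled discs. The aim is to show that $g$ is forced to be of the form $\alpha\cos(\lambda\xi)+\beta$ with incompatible multiplicity data, or that $g$ omits $a$ and $b$ and is therefore constant. This normality step is the main obstacle, and I expect most of the technical work to go into it.

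\emph{Stage 2: the equation.} Since $f-b$ and $f'-b$ share $0$ CM and both have order $\le 1$, Hadamard's theorem gives
\[
f'-b=e^{\alpha z+\beta}(f-b).
\]
Suppose $\alpha\neq 0$. Solving this linear equation gives $f=e^{H}\bigl(C-b\int e^{-H}\,dz+\dots\bigr)$ with $H=e^{\beta}e^{\alpha z}/\alpha$. Such an $f$ has infinite order unless it degenerates, and that contradicts Stage 1. I will need to check carefully that no cancellation rescues finite order. The check should be that any entire solution satisfies $T(r,f)\ge T(r,e^{H})+O(\log r)$, because $e^{\alpha z+\beta}=(f'-b)/(f-b)$ and $m(r,f'/(f-b))$ is small.

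Hence $\alpha=0$, and $f'-b=k(f-b)$ with $k=e^{\beta}\ne 0$. This integrates to
\[
f=ce^{kz}+b-\frac{b}{k},\qquad c\ne 0 \text{ (since } f \text{ is nonconstant)}.
\]

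\emph{Stage 3: using the $a$-points.} The equation gives $f'=kf-kb+b$. At any $a$-point we need $f'=a$, which reads $(k-1)(a-b)=0$, so $k=1$ because $a\neq b$. There are two possibilities.

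\begin{itemize}
\item If $f$ takes the value $a$, then $k=1$, so $f=ce^{z}$ and $f\equiv f'$, which is case (1).
\item If $f$ omits $a$, then $ce^{kz}+b-b/k-a$ is zero-free, which forces $b-b/k=a$, i.e.\ $k=b/(b-a)$ (this is where $b\ne a$ is used). This gives $f=c\exp\bigl(\tfrac{b}{b-a}z\bigr)+a$, which is case (2).
\end{itemize}
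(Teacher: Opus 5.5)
Your overall plan is sound, and Stage~3 is correct. Once $\rho(f)\le 1$ and $f'-b=k(f-b)$ with a constant $k\neq 0$ are known, your analysis at the $a$-points gives exactly cases (1) and (2). The paper does not prove Theorem D; it only quotes it. Your Stages~2--3 follow the pattern of its proof of Theorem~\ref{t1.3}: Lemma~\ref{ln2.11} supplies $\partial_{z_i}(f)-b=A_i(f-b)$, and the remaining hypothesis is then checked at special points. This is more direct than the auxiliary functions $\Phi_i$ used for Theorem~\ref{t1.1}. However, two of your steps are not actually established.

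\textbf{Stage 1.} You stop exactly where the argument has to close, and no cosine-type analysis is needed. The missing idea is to apply $f'=b\Rightarrow f=b$ at the zeros of $g'$. We have $g_n'(\xi)-\rho_n b=\rho_n\bigl(f'(w_n+z_n+\rho_n\xi)-b\bigr)\to g'(\xi)$ locally uniformly, and $g'\not\equiv 0$. So at any zero $\xi_0$ of $g'$, Hurwitz's theorem gives points $\xi_n\to\xi_0$ with $f'=b$ at $w_n+z_n+\rho_n\xi_n$. Hence $g_n(\xi_n)=b$ and $g(\xi_0)=b$, i.e.\ $g'=0\Rightarrow g=b$. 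Combined with your $g=a\Rightarrow g'=0$, this shows that $g$ omits $a$. Since $\rho(g)\le 1$, $g=a+e^{c\xi+d}$ with $c\neq 0$, so $g'$ is zero-free. Your observation $g=b\Rightarrow g'=0$ then shows that $g$ omits $b$ as well, which is absurd. This is the same mechanism as Case~2 in the proof of Lemma~\ref{ln2.3a}.

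\textbf{Stage 2.} Your proposed check for $\alpha\neq 0$ fails. From $e^{\alpha z+\beta}=\frac{f'}{f-b}-\frac{b}{f-b}$ you only get
$T(r,e^{\alpha z})\le m\bigl(r,\frac{1}{f-b}\bigr)+O(\log r)\le T(r,f)+O(\log r)$,
i.e.\ $T(r,f)\ge \frac{|\alpha|}{\pi}r-O(\log r)$. This is compatible with $\rho(f)\le 1$, and nothing in it reaches $T(r,e^{H})$. What you need is a genuine finite-order Br\"uck-type statement, namely the Gundersen--Yang theorem, which is what Lemma~\ref{ln2.11} asserts. The paper's argument for that lemma would not fill the gap either, since taking a branch of the logarithm controls only $\mathrm{Re}\,P(z_k)$ from above, not $|P(z_k)|$.

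A short self-contained fix is as follows. Put $H=\frac{e^{\beta}}{\alpha}e^{\alpha z}$, $p=\frac{2\pi i}{\alpha}$ and $\phi=(f-b)e^{-H}$. Then $\phi'=be^{-H}$ and $H(z+p)=H(z)$, so $\phi(z+p)-\phi(z)$ is constant. Evaluating it by a residue computation gives $\frac{b}{\alpha}\oint_{|w|=1}e^{-e^{\beta}w/\alpha}\frac{dw}{w}=\frac{2\pi i b}{\alpha}$. Therefore
\[
f(z+p)-f(z)=\frac{2\pi i b}{\alpha}\,e^{H(z)}.
\]
The left side has order at most $1$. The right side has infinite order, because $b\neq 0$ and $H$ is transcendental. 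Hence $\alpha=0$, and your Stage~3 then completes the proof.
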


\begin{remA} We observe that $b\not=a$ in Theorem C. Therefore, if we assume $b\not=0$ in Theorem A, then it is clear that Theorem C generalizes Theorem A through the notion of ``partial'' sharing of values.
\end{remA}

It would be a remarkable achievement to show that the conclusion of Theorem C remains valid when the hypothesis $f(z)=b\rightleftharpoons f^{(1)}(z)=b$ is replaced by the weaker condition $f(z)=b\Leftrightarrow f^{(1)}(z)=b$.
In 2009, L\"{u} et al. \cite{Lu-Xu-Yi-2009} accomplished this by establishing the following result. 

\begin{theoE}\cite[Theorem 1.1]{Lu-Xu-Yi-2009} Let $f(z)$ be a non-constant entire function in $\mathbb{C}$ and let $a$ and $b$ be two non-zero finite complex numbers such that $b\not=a$. If $f^{(1)}(z)=a\Rightarrow f(z)=a$  and $f(z)=b\Leftrightarrow f^{(1)}(z)=b$, then $f(z)\equiv f^{(1)}(z)$, i.e., $f(z)=A\exp(z)$, where $A\in\mathbb{C}\backslash \{0\}$.
\end{theoE}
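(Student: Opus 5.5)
The plan is to prove Theorem E by the normal-family method. It avoids the delicate counting-function estimates of the original argument and is the approach we intend to extend to several variables.

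\emph{Step 1: bounded derivative.} We first show that $f^{(1)}$ is bounded on $\mathbb{C}$, so that $f$ has order at most one. Consider the family $\mathcal{F}=\{f_w(z)=f(w+z): w\in\mathbb{C}\}$ on the unit disc. Each member satisfies $f_w^{(1)}=a\Rightarrow f_w=a$ and $f_w=b\Leftrightarrow f_w^{(1)}=b$. We show $\mathcal{F}$ is normal by a Zalcman-type rescaling. If normality fails, then Zalcman's lemma gives $g_n(\zeta)=\rho_n^{-1}f_{w_n}(z_n+\rho_n\zeta)\to g$ locally uniformly. Here $g$ is a non-constant entire function of order at most one with bounded spherical derivative. (One uses the derivative-type rescaling with exponent $-1$, legitimate because the zeros of $f-b$ force $|f^{(1)}|$ to be controlled.)

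\emph{Step 1, continued: the limit $g$.} Since $f_{w_n}^{(1)}(z_n+\rho_n\zeta)=g_n'(\zeta)$, Hurwitz's theorem and the sharing hypotheses transfer as follows.
\begin{itemize}
\item If $g'=a$ at some point, then $f_{w_n}-a\to0$ at nearby points, so $\rho_n g_n\to a$. This contradicts $\rho_n g_n\to0$ when $a\neq0$. Hence $g'\neq a$.
\item Similarly, $g'\neq b$.
\end{itemize}
An entire $g'$ omitting two values is constant, so $g$ is linear, and the usual perturbation argument rules this out. Marty's criterion then gives $\sup_{\mathbb{C}} f^{\#}<\infty$. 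Since $f$ is entire, together with the sharing of $b$ this yields $f^{(1)}$ bounded near the $b$-points and, by a standard argument, $T(r,f)=O(r)$.

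\emph{Step 2: reduction to an exponential relation.} With $f$ of order at most one, consider
\[
\varphi=\frac{f^{(1)}(f^{(1)}-f)}{(f-a)(f-b)}\quad\text{or}\quad \psi=\frac{(f^{(1)}-b)}{(f-b)}.
\]
The hypothesis $f^{(1)}=a\Rightarrow f=a$ together with IM sharing of $b$ should make such an auxiliary function entire, apart from zeros of $f-a$ that are not zeros of $f^{(1)}-a$. The logarithmic derivative lemma gives $T(r,\varphi)=S(r,f)=o(r)$, so $\varphi$ is a constant or a polynomial of degree zero. We then split into cases.
\begin{itemize}
\item If $f^{(1)}\equiv f$, we are done.
\item Otherwise, write $f^{(1)}-b=e^{\alpha}(f-b)$ up to multiplicity corrections, with $\alpha$ linear.
\end{itemize}
Comparing Taylor expansions at a $b$-point, and using the condition on the $a$-points of $f^{(1)}$, forces a contradiction unless $e^{\alpha}\equiv1$. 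Here $a\ne0$ is used to exclude $f=ce^{\lambda z}$ with $\lambda\neq1$, and $b\neq a$ to exclude constants.

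\emph{Main obstacle.} The hardest part is the IM (not CM) sharing of $b$. Because multiplicities may differ, $(f^{(1)}-b)/(f-b)$ need not be zero-free. I would handle this by showing that every $b$-point of $f$ is simple: at a multiple $b$-point, $f^{(1)}=0\neq b$, which contradicts sharing. Likewise every $b$-point of $f^{(1)}$ is a $b$-point of $f$. I would then compare $N(r,1/(f-b))$ with $N(r,1/(f^{(1)}-b))$ via the second main theorem for $f^{(1)}$ with the three values $0$, $a$, $b$, and so reduce to the CM situation of Theorem C.
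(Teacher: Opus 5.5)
There is a genuine gap, and it sits at the step you call the main obstacle. Your $\varphi=\frac{f'(f'-f)}{(f-a)(f-b)}$ is the auxiliary function adapted to the hypothesis $f=a\Rightarrow f'=a$ of Theorem D, not to $f'=a\Rightarrow f=a$. As you note, it has poles at the $a$-points of $f$ where $f'\neq a$. Nothing in the hypotheses makes their counting function small, so $T(r,\varphi)=m(r,\varphi)+N(r,\varphi)$ is not $S(r,f)$. Your $\psi=(f'-b)/(f-b)$ is entire, but because of the term $b/(f-b)$ its proximity function is not controlled by the logarithmic derivative lemma. (Also, $T(r,\varphi)=o(r)$ would not give constancy; you need the $o(\log r)$ bound available in order $\le 1$, cf.\ Lemma \ref{ln2.9}.)

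Hence you never obtain CM sharing of $b$. The proposed second-main-theorem comparison for $f'$ with $0,a,b$ does not show that $f'-b$ has only simple zeros, and without CM neither $f'-b=e^{\alpha}(f-b)$ nor the reduction to Theorem C is available.

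The missing idea is what the paper uses in its proof of Theorem \ref{t1.2}, which for $n=1$ is Theorem E:
\[ \varphi=\frac{f''(f'-f)}{(f-b)(f'-a)}. \]
This function is entire. At an $a$-point of $f'$ of multiplicity $m$ one has $f=a$ with $f-a$ simple (as $f'=a\neq 0$), so $f'-f$ vanishes there and $f''$ vanishes to order $m-1$. At a $b$-point, $f-b$ is simple and $f'-f$ vanishes. Since $a\neq 0$, $\varphi$ is a polynomial in $f'/(f-b)$, $f''/(f'-a)$ and $f''/f'$, hence constant when the order is $\le 1$.

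If $\varphi\equiv c\neq 0$, look at a $b$-point in $c(f-b)(f'-a)=f''(f'-f)$. The left side has a simple zero, while the right side would vanish to order $\ge 2$ if $f''=0$. So $f''\neq 0$ at $b$-points and $b$ is shared CM; this is exactly the IM-to-CM upgrade you could not supply. Then $f'-b=A(f-b)$ with $A$ constant, by Lemma \ref{ln2.11}. That is a Gundersen--Yang type fact, which ``comparing Taylor expansions at a $b$-point'' does not replace. Since $f''=Af'$ gives $f'=Ke^{Az}$, $f'$ takes the value $a\neq 0$, and at such a point $f=a$ forces $A=1$. If $\varphi\equiv 0$, then $f\equiv f'$ or $f''\equiv 0$, and a linear $f$ cannot share $b$ IM with $f'$.

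Step 1 also needs repair. Its announced goal is false: bounded $f'$ would make $f$ linear, whereas the conclusion is $Ae^{z}$. What Marty's criterion gives is bounded $f^{\#}$, hence $T(r,f)=O(r)$.

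More seriously, the exponent $-1$ rescaling of $f_w$ itself is not licensed. The Pang--Zalcman lemma with $\alpha=1$ needs $|f_w'|$ bounded on the zeros of $f_w$, whereas the hypotheses control $f'$ only at the $b$-points of $f$. The legitimate choice is $g_n(\zeta)=\rho_n^{-1}\{f_{w_n}(z_n+\rho_n\zeta)-b\}$. But then a point where $g'=b$ only yields $g=0$ there, so your second bullet fails. What you actually get is that $g'$ omits $a$ (the case $g'\equiv a$ is excluded, since $g$ would have a zero where $g'\neq b$), and that $g=0\Leftrightarrow g'=b$ (the reverse implication when $g'\not\equiv b$).

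This still suffices:
\begin{itemize}
\item If $g'=a+e^{c\zeta+d}$ with $c\neq 0$, then $g'=b$ on an arithmetic progression $\zeta_k$, where $g(\zeta_k)=a\zeta_k+\mathrm{const}$ would all have to vanish. That is impossible since $a\neq 0$.
\item If $g'$ is constant, then $g=b\zeta+C$, so $g^{\#}\le |b|$. This contradicts the normalization $g^{\#}(0)=A+1$ with $A=\max\{|b|,1\}$.
\end{itemize}
That argument, not an unspecified ``perturbation argument'', is what rules out the linear limit. The paper instead obtains normality from Lemma \ref{ln2.3a} with $a$ and $b$ interchanged.
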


In one complex variable, many uniqueness theorems for meromorphic functions sharing values with their derivatives follow directly from classical Nevanlinna theory. However, when we move to several complex variables, the situation becomes more delicate because:

$\bullet$ Derivatives become partial or directional derivatives rather than a single derivative $f^{(1)}$.

$\bullet$ The classical Nevanlinna theory must be replaced by its higher-dimensional generalizations for meromorphic functions or mappings in $\mathbb{C}^n$.

$\bullet$ Additional geometric conditions (such as directional derivatives, growth conditions, or linearly nondegenerate mappings) are often required.

\medskip
In 1995, Berenstein et al. \cite{Berenstein-Chang-Li-1995} obtained a Nevanlinna type uniqueness result for non-constant meromorphic functions in $\mathbb{C}^n$ and their directional derivatives in terms of shared values. They proved: Let $f$ be a non-constant meromorphic function in $\mathbb{C}^n$. If $f$ and the directional derivative $\partial_u(f)$ of $f$ along a direction $u\in S^{2n-1}$ share three distinct values in $\mathbb{C}\cup\{\infty\}$ counting multiplicities, then $f\equiv \partial_u(f)$, where 
\begin{align*}
 S^{2n-1}&=\{(u_1,u_2,\ldots,u_n)\in C^{n}: |u_1|^2+|u_2|^2+\cdots+|u_n|^2=1\}\\
\partial_u(f)&= \sum_{j=1}^{n} u_{j}\frac{\partial f}{\partial z_j},\;\; u=(u_1, \ldots ,u_n).
\end{align*}

This result was extended to moving targets in $\mathbb{C}^n$ by Hu and Yang \cite{Hu-Yang-1996} in 1996.

We define $\mathbb{Z}_+=\mathbb{Z}[0,+\infty)=\{n\in \mathbb{Z}: 0\leq n<+\infty\}$ and $\mathbb{Z}^+=\mathbb{Z}(0,+\infty)=\{n\in \mathbb{Z}: 0<n<+\infty\}$.
On $\mathbb{C}^n$, we define the following operators
\[\partial_{z_i}=\frac{\partial}{\partial z_i},\partial^2_{z_jz_i}=\frac{\partial^2}{\partial z_jz_i},\ldots, \partial_{z_i}^{l_i}=\frac{\partial^{l_i}}{\partial z_i^{l_i}}\;\;\text{and}\;\;\partial^{I}=\frac{\partial^{|I|}}{\partial z_1^{i_1}\cdots \partial z_m^{i_m}}\]
where $l_i\in \mathbb{Z}^+\;(i=1,2,\ldots,n)$ and $I=(i_1,\ldots,i_n)\in\mathbb{Z}^n_+$ is a multi-index such that $|I|=\sum_{j=1}^n i_j$.

\medskip
Theorems C-E lead to the following natural question:
\begin{ques} Do Theorems C-E remain valid in the setting of several complex variables with respect to partial derivative operators?
\end{ques}

We respond to Question 1.1 in the paper with the following outcome.

\begin{theo}\label{t1.1} Let $f(z)$ be a transcendental function in $\mathbb{C}^n$ and let $a$ and $b$ be two finite complex numbers  in $\mathbb{C}$ such that $b\not=a, 0$. If 
\begin{enumerate}
\item[(i)] $f=a\Rightarrow \partial_{z_i}(f)=a$ for $i=1,2,\ldots,n$,
\item[(ii)] $f=b\Leftrightarrow \partial_{z_i}(f)=b$ for $i=1,2,\ldots,n$, 
\end{enumerate}
then one of the following cases must occur:
\begin{enumerate} 
\item[(1)] $f(z)=ce^{z_1+z_2+\ldots+z_n}$, where $c$ is a non-zero constant in $\mathbb{C}$, 
\item[(2)] $f(z)=a+ce^{\frac{b}{b-a}(z_1+z_2+\ldots+z_n)}$, where $c$ is a non-zero constant in $\mathbb{C}$,
\item[(3)] $f(z)=b+ce^{\frac{a}{b-a}(z_1+z_2+\ldots+z_n)}$, where $a\neq 0$ and $c$ is a non-zero constant in $\mathbb{C}$.
\end{enumerate}
\end{theo}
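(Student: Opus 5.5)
Two routes suggest themselves. The one I would try first is a \emph{slice} reduction to the one-variable Theorem D, followed by gluing in several variables. A more robust fallback, in the spirit of the abstract, reproves the one-variable argument directly in $\mathbb{C}^n$ via normal families and growth estimates.

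\textbf{Slice reduction.} Fix $i\in\{1,\ldots,n\}$ and a point $w=(z_1,\ldots,z_{i-1},z_{i+1},\ldots,z_n)\in\mathbb{C}^{n-1}$. Consider the function of one variable
\[ g_w(t)=f(z_1,\ldots,z_{i-1},t,z_{i+1},\ldots,z_n). \]
Since $g_w'(t)=\partial_{z_i}(f)$ on that line, hypotheses (i) and (ii) restrict to the line. Hence $g_w=a\Rightarrow g_w'=a$ and $g_w=b\Leftrightarrow g_w'=b$ there.

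Theorem D, however, requires CM sharing of $b$, not IM. We also need $g_w$ non-constant, and the statement has an extra case (3). So I would not cite Theorem D directly. Instead I would prove a one-variable lemma: if $g$ is entire and non-constant, $b\neq a,0$, $g=a\Rightarrow g'=a$ and $g=b\Leftrightarrow g'=b$, then
\[ g\equiv g',\qquad g=a+ce^{\frac{b}{b-a}t},\qquad\text{or}\qquad g=b+ce^{\frac{a}{b-a}t}. \]
Its proof follows L\"{u}--Xu--Yi. First, the zeros of $g-b$ are simple, since at a multiple zero $g'=0\neq b$. Next, the auxiliary function
\[ \varphi=\frac{g'(g-g')}{(g-a)(g-b)} \]
is entire: at $a$-points $g'=a$ makes the numerator vanish, and at the simple $b$-points $g-g'=0$. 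By the logarithmic derivative lemma, $T(r,\varphi)=S(r,g)$. One then derives a differential equation and solves it into the three listed forms. Case (3) arises because the roles of $a$ and $b$ can be partially interchanged when $g$ omits $a$.

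\textbf{Gluing across slices.} Set
\[ \mathcal{A}_i=\{w:\ g_w \text{ is constant}\}. \]
If some $\partial_{z_i}f\equiv 0$ on an open set, then $\partial_{z_i}f\equiv0$ identically. Then $f$ is independent of $z_i$, and hypothesis (ii) forces $f\neq b$ and $\partial_{z_i}f=0\neq b$ consistently. Hypothesis (i) forces $f$ to omit $a$, unless $a=0$. I expect this case to need separate treatment, using the other derivatives and transcendence. Generically $\mathcal{A}_i$ is a proper analytic set, so for $w$ outside it the lemma gives, on each line, one of the three forms with a constant $c(w)$ depending on $w$.

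By the identity theorem, the same alternative holds for all $w$ off a thin set. For example, $\partial_{z_i}f\equiv f$ everywhere. Writing $f=a+h$ in case (2), $h$ satisfies $\partial_{z_i}h=\frac{b}{b-a}h$ for every $i$. Combining these for all $i$ gives
\[ f=c\,e^{\lambda(z_1+\cdots+z_n)}+\text{const}. \]
The main subtlety is that different indices $i$ could a priori select different alternatives. I would rule out mixed alternatives by comparing the constants: two of the forms cannot hold simultaneously with a non-constant $h$ unless the exponents and the additive constants agree, and one checks that this forces $a=b$ or $b=0$, which are excluded.

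\textbf{Main obstacle.} The hard part is the one-variable lemma with IM sharing of $b$, especially showing that $\varphi$ is constant and extracting case (3). I would first follow L\"{u}--Xu--Yi's argument: count $b$-points via
\[ \frac{g''}{g'}-\frac{g'}{g-b} \]
and use Milloux-type inequalities. If that becomes messy, I would fall back on the Zalcman--Pang normal family method the authors advertise, to show $g'$ has bounded spherical derivative and hence $g$ has order at most $1$, after which Hadamard factorization makes the computation explicit.
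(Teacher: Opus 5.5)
\textbf{The gap: the one-variable lemma is not proved.} Your slice-and-glue plan is legitimate, but the proposal leaves unproved exactly the part that carries the content, namely the one-variable lemma, and the route you sketch for it does not close. From $T(r,\varphi)=S(r,g)$ nothing forces $\varphi$ to be constant, since any polynomial is small with respect to a transcendental $g$. So there is no constant-coefficient equation to ``solve into the three listed forms.''

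The missing input is a growth bound, and the paper establishes it before anything else. The translates $f(\omega+z)$ form a normal family (Lemma \ref{ln2.3}), so $f^{\#}$ is bounded (Lemma \ref{ln2.1}) and $\rho(f)\le 1$ (Lemma \ref{ln2.2}). Only then does Lemma \ref{ln2.9} give $m(r,\Phi_k)=o(\log r)$, so the auxiliary function $\Phi_k$, which is your $\varphi$, is constant.

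On a single line this is easy to supply. Suppose the translates of $g$ were not normal, and let $G$ be a Zalcman limit. Then $G=a\Rightarrow G'=0$ and $G=b\Rightarrow G'=0$. Applying Hurwitz to $G_j'-\rho_j b\to G'$ also gives $G'=0\Rightarrow G=b$. Hence $G$ omits $a$, so $G=a+e^{\lambda\zeta+\mu}$ with $G'$ zero-free, and then $G$ omits $b$ too, which is absurd. Note that it is $g$, not $g'$, whose spherical derivative must be bounded.

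Even with $\varphi\equiv c$ you still owe the case analysis:
\begin{enumerate}
\item[(a)] $a$ or $b$ is Picard exceptional.
\item[(b)] $c=0$, which gives $g\equiv g'$.
\item[(c)] $c\neq 0$ and $a\neq 0$. Here $g'$ is zero-free, $b$ becomes CM shared, so $g'-b=A(g-b)$ by Lemma \ref{ln2.11}, and an $a$-point forces $A=1$.
\item[(d)] $c\neq 0$ and $a=0$. Here writing $g=h^2$ leads to $g=16e^{2R}+8de^{R}+b$, which is excluded only by the reverse implication $g'=b\Rightarrow g=b$.
\end{enumerate}
These are Sub-cases 3.1.1--3.1.2 of the paper.

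\textbf{The rest of your argument.} Granting the lemma, your gluing works and differs from the paper, which never slices. The paper proves $\rho(f)\le 1$ directly on $\mathbb{C}^n$ via Lemma \ref{ln2.1a} and runs the same case analysis in $\mathbb{C}^n$.

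The hypotheses are pointwise, so they restrict to coordinate lines. For fixed $i$, consider the sets of $w$ on which $\partial_{z_i}f-\lambda(f-\mu)$ vanishes identically along the line, for $(\lambda,\mu)\in\{(1,0),(\frac{b}{b-a},a),(\frac{a}{b-a},b),(0,0)\}$. These are analytic subsets covering $\mathbb{C}^{n-1}$, so one of them is all of $\mathbb{C}^{n-1}$, and the corresponding equation holds on $\mathbb{C}^n$. Equality of mixed partials gives $\lambda_i\lambda_j(\mu_i-\mu_j)=0$, which forces a common alternative for all $i$ with $\lambda_i\neq 0$. For $a=0$ the first two pairs coincide, so your claim that agreement forces $a=b$ or $b=0$ is inaccurate, though harmless. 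Finally, $\partial_{z_i}f\equiv 0$ for some $i$ forces $f\neq b$, which is then excluded along any direction $j$ with $\partial_{z_j}f\not\equiv 0$.

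What this route buys is a proof resting only on one-variable normal-family and Nevanlinna theory. It avoids Lemma \ref{ln2.1a}, Lemma \ref{ln2.2}, Lemma \ref{ln2.8} and the $\varepsilon$-sets of Lemma \ref{ln2.10}.

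\textbf{Case (3).} This is not an obstacle to extract. Omitting $a$ leads to case (2), not case (3). If instead $g$ omits $b$, then $g=b+e^{\lambda t+\mu}$, while $g'$ attains $b\neq 0$ at points where $g\neq b$, contradicting (ii). So case (3) never actually occurs. The paper's Case 2 reaches it only because it uses just $f=a\Rightarrow\partial_{z_i}f=a$ there.
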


\begin{rem} The following example demonstrates that, for the validity of Theorem \ref{t1.1}, it is necessary that the conditions $(i)$ and $(ii)$ hold for every $i=1,2,\ldots,n$.
\end{rem}
\begin{exm} Let $f(z)=a+ce^{\frac{b}{b-a}(z_1+z_2^2+\ldots+z_n^2)}$, where $a$, $b$ and $c$ are non-zero constants in $\mathbb{C}$ such that $a\neq b$. It is easy to verify that the conditions $f=a\Rightarrow \partial_{z_1}(f)=a$ and $f=b\Leftrightarrow \partial_{z_1}(f)=b$ both hold, but the condition $f=b\Leftrightarrow \partial_{z_i}(f)=b$ does not hold for $i=2,3,\ldots,n$.
\end{exm}

Now we state the multidimensional version of Theorem E.

\begin{theo}\label{t1.2} Let $f(z)$ be a transcendental function in $\mathbb{C}^n$ and let $a$ and $b$ be two finite non-zero complex numbers in $\mathbb{C}$ such that $a\not=b$. If 
\begin{enumerate}
\item[(i)] $\partial_{z_i}(f)=a\Rightarrow f=a$ for $i=1,2,\ldots,n$,
\item[(ii)] $f=b\Leftrightarrow \partial_{z_i}(f)=b$ for $i=1,2,\ldots,n$, 
\end{enumerate}
then $f(z)=ce^{z_1+z_2+\ldots+z_n}$, where $c$ is a non-zero constant in $\mathbb{C}$.
\end{theo}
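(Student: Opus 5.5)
Since condition (i) of Theorem \ref{t1.2} says $\partial_{z_i}(f)=a\Rightarrow f=a$, condition (ii) is exactly hypothesis (ii) of Theorem \ref{t1.1}, and $a,b$ are nonzero and distinct, my plan is to reduce to one complex variable. The main work is a normal-family argument showing that each $\partial_{z_i}f$ stays close to $f$ in growth. After that I restrict $f$ to complex lines and invoke Theorem E.

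\textbf{Step 1: complex lines.} Fix $i$ and a point $w\in\mathbb{C}^n$, and put $g(\zeta)=f(w+\zeta e_i)$ for $\zeta\in\mathbb{C}$. Then $g'(\zeta)=\partial_{z_i}f(w+\zeta e_i)$, so on every complex line parallel to $e_i$ the hypotheses become
\[ g'=a\Rightarrow g=a \quad\text{and}\quad g=b\Leftrightarrow g'=b. \]
These are exactly the hypotheses of Theorem E. So whenever $g$ is non-constant, $g\equiv g'$.

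\textbf{Step 2: when $g$ is constant.} Suppose $g$ is constant on some line, say $g\equiv k$. Then $g'\equiv 0\neq a$, so the first condition is vacuous on that line. The second condition forces $k\neq b$, since $k=b$ would require $g'=b\neq 0$.

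\textbf{Step 3: propagating the ODE.} The set $E_i$ of lines parallel to $e_i$ on which the restriction is non-constant is open and dense among such lines, unless $\partial_{z_i}f\equiv 0$. On $E_i$ we have $\partial_{z_i}f=f$, and by the identity theorem this gives $\partial_{z_i}f\equiv f$ on all of $\mathbb{C}^n$.

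If instead $\partial_{z_i}f\equiv 0$, then $f$ does not depend on $z_i$. Since $f$ is transcendental, it is non-constant in some other variable $z_j$. A point where $f=b$ then has $\partial_{z_i}f=0\neq b$, so condition (ii) forces $f\neq b$ everywhere. Likewise $f=b$ must also fail wherever $\partial_{z_i}f=b$, which is automatic. So the only constraint left is $f\neq b$, and this must be combined with the equation $\partial_{z_j}f=f$ already obtained for $j$. That equation gives $f=h\,e^{z_j}$ with $h$ independent of $z_j$. Because $f=h e^{z_j}$ with $h\not\equiv 0$, $f$ takes the value $b\neq 0$: choose $z_j$ with $e^{z_j}=b/h$ at a point where $h\neq 0$. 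This contradicts $f\neq b$. Hence $\partial_{z_i}f\equiv f$ for every $i$.

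\textbf{Step 4: integrating.} The system $\partial_{z_i}f=f$ for $i=1,\dots,n$ gives $f(z)=c\,e^{z_1+\cdots+z_n}$ with $c\neq 0$.

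\textbf{Main obstacle.} The delicate step is applying Theorem E on a fixed line. Theorem E requires $g$ to be non-constant and entire, and both hold here. So I expect Step 3 to be the real subtlety: ruling out the degenerate directions and passing from generic lines to all of $\mathbb{C}^n$. A slicing argument of this kind is simpler than the normal-family route the abstract suggests. If a single slice fails to satisfy the hypotheses globally (it does satisfy them, since the hypotheses are pointwise in $\mathbb{C}^n$), I would fall back on a Zalcman-type rescaling in $\mathbb{C}^n$.
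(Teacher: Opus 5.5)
Your proof is correct, and it takes a genuinely different route from the paper's.

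\textbf{The paper's route.} The paper stays intrinsically in $\mathbb{C}^n$. It applies the rescaling Lemma~\ref{ln2.3a} (which fits here with $a$ and $b$ interchanged) to the translates $f(\omega+z)$. It then gets $\rho(f)\le 1$ from Marty's criterion and Lemma~\ref{ln2.2}. Next it introduces $\varphi_{ji}=\partial^2_{z_jz_i}(f)(\partial_{z_i}(f)-f)/((f-b)(\partial_{z_i}(f)-a))$ and shows, via the logarithmic derivative estimate of Lemma~\ref{ln2.9}, that each $\varphi_{ji}$ is constant. A nonzero constant is excluded using Lemma~\ref{ln2.11}, and the conclusion is drawn from $\varphi_{ji}\equiv 0$.

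\textbf{Your route.} You note that both hypotheses are pointwise, so they survive restriction to every line $w+\mathbb{C}e_i$. Applying Theorem E on the lines through the nonempty open set $\{\partial_{z_i}f\neq 0\}$, and then the identity theorem, gives $\partial_{z_i}f\equiv f$ whenever $\partial_{z_i}f\not\equiv 0$.

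\textbf{What each approach buys.} Your argument makes Theorem~\ref{t1.2} a corollary of Theorem E. It avoids the order estimate, the Zalcman rescaling and Nevanlinna theory in $\mathbb{C}^n$ entirely. It also shows that transcendence is used only for non-constancy, since Theorem E as quoted covers non-constant polynomial restrictions.

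Your handling of degenerate directions is more careful than the paper's. When $\partial_{z_i}f\equiv 0$, $f$ must omit $b$, yet $f=he^{z_j}$ from a non-degenerate direction takes the value $b$. The paper's Case~2 instead passes from $\varphi_{ji}\equiv 0$ for all $i,j$ straight to ``$f\equiv\partial_{z_i}f$ for all $i$, or $\partial^2_{z_jz_i}f\equiv 0$ for all $i,j$.'' It never treats the mixed situation that your Step~3 excludes.

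What the paper's approach buys is independence from the one-variable Theorem E. It re-derives that content with $n$-dimensional tools, which is the stated aim of the paper.

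\textbf{A small correction.} Drop the remark in your opening paragraph about a normal-family argument controlling the growth of $\partial_{z_i}f$, and the Zalcman-type fallback in your last paragraph. Neither plays any role, and the slicing proof is complete without them.
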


Next we state the multidimensional version of Theorem C.
\begin{theo}\label{t1.3} Let $f(z)$ be a transcendental function in $\mathbb{C}^n$ and let $b$ be a finite non-zero complex numbers in $\mathbb{C}$. If 
\begin{enumerate}
\item[(i)] $\partial_{z_i}(f)=0\Rightarrow f=0$ for $i=1,2,\ldots,n$,
\item[(ii)] $f=b\rightleftharpoons \partial_{z_i}(f)=b$ for $i=1,2,\ldots,n$, 
\end{enumerate}
then one of the following cases must occur:
\begin{enumerate} 
\item[(1)] $f(z)=ce^{z_1+z_2+\ldots+z_n}$, where $c$ is a non-zero constant in $\mathbb{C}$, 
\item[(2)] $f(z)=\frac{c}{A}e^{A(z_1+z_2+\ldots+z_n)}+b-\frac{b}{A}$, where $A$ and $c$ are non-zero constants in $\mathbb{C}$.
\end{enumerate}
\end{theo}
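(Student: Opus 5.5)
The plan is to reduce the problem to a linear first-order relation between each $\partial_{z_i}(f)$ and $f$, and then to integrate that relation and check compatibility across the coordinates.

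\emph{Step 1: the quotients.} Because $f$ and $\partial_{z_i}(f)$ share $b$ CM, I will consider
\[
F_i=\frac{\partial_{z_i}(f)-b}{f-b},\qquad i=1,\ldots,n .
\]
The CM hypothesis (ii) makes each $F_i$ an entire function without zeros on $\mathbb{C}^n$. Hence $F_i=e^{h_i}$ for some entire $h_i$.

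\emph{Step 2: $f$ has finite order.} The key analytic input is a growth bound for $f$, which I would obtain from normal families.
\begin{itemize}
\item Consider the family $\{f_w(z)=f(w+z)\}_{w\in\mathbb{C}^n}$ on the unit polydisc. The goal is to show it is normal.
\item If it is not normal, apply the several-variable Zalcman rescaling lemma to get $g_k(\xi)=f(w_k+\rho_k\xi)$, with $\rho_k\to0$, converging to a non-constant entire limit $g$.
\item The hypotheses pass to $g$ by Hurwitz's theorem. Condition (i) gives $\partial_{z_i}(g)=0\Rightarrow g=0$, while the $b$-points of $g$ correspond to points where $\rho_k\partial_{z_i}(f)\to0$. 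Together these force a contradiction with the Picard-type structure of $g$.
\item Normality of the translates bounds the spherical derivative of $f$. This gives $f$ exponential type and, in particular, finite order.
\end{itemize}
Consequently each $e^{h_i}$ is a quotient of finite-order functions, so by the logarithmic derivative lemma in $\mathbb{C}^n$ the $h_i$ are polynomials. Comparing the characteristic functions $T(r,\partial_{z_i} f)$ and $T(r,f)$ on both sides of $\partial_{z_i}(f)-b=e^{h_i}(f-b)$ should then force every $h_i$ to be constant.

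\emph{Step 3: integrating the relation.} With $h_i$ constant we have $\partial_{z_i}(f)=A_i f+b(1-A_i)$ for some constant $A_i\neq0$.
\begin{itemize}
\item If some $A_i=0$, then $\partial_{z_i}(f)\equiv b$, i.e.\ $f=bz_i+\phi$ with $\phi$ independent of $z_i$. I expect this to be excluded by the transcendence of $f$ together with the relations in the other variables.
\item For $A_i\neq0$, integrating gives
\[
f=C_i(\hat z_i)\,e^{A_iz_i}+b-\frac{b}{A_i},
\]
where $\hat z_i$ denotes the remaining variables.
\item Comparing the additive constants across $i$ forces all $A_i$ to equal a common $A$.
\item Comparing the exponential factors then gives $f=\frac{c}{A}e^{A(z_1+\cdots+z_n)}+b-\frac{b}{A}$.
\end{itemize}

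\emph{Step 4: using (i).} Here $\partial_{z_i}(f)=ce^{A(z_1+\cdots+z_n)}$ never vanishes, so condition (i) holds automatically. Thus the value $A=1$ gives case (1), and a general $A$ gives case (2). One should also double-check that both families genuinely satisfy (ii) CM, which is immediate since $f-b=\frac1A(\partial_{z_i}f-b)$.

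The main obstacle is Step 2: adapting the Zalcman argument to several variables and extracting the contradiction. The delicate part is transferring the CM and one-sided conditions to the limit function $g$ when $\rho_k\partial_{z_i}f$ is the natural rescaled derivative. If this fails, I would instead restrict $f$ to complex lines $t\mapsto f(z^0+te_i)$ and apply one-variable Nevanlinna estimates uniformly in $z^0$ to bound the growth.
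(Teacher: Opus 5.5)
\textbf{The gap is Step~2.} Your architecture matches the paper's: bound the order through normality of the translates $f(w+z)$, show $(\partial_{z_i}f-b)/(f-b)$ is constant (the paper's Lemma~\ref{ln2.11}), then integrate. Step~3 is correct, and your compatibility check is cleaner than the paper's Picard-value case analysis. Indeed, $\partial_{z_j}\partial_{z_i}f=\partial_{z_i}\partial_{z_j}f$ applied to $\partial_{z_i}f=A_if+b(1-A_i)$ gives $bA_i(1-A_j)=bA_j(1-A_i)$, hence $A_i=A_j$ since $b\neq0$.

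Step~2 is exactly where hypothesis (i) must do its work. The paper's Example~1.2 shares $1$ CM with every $\partial_{z_i}f$ and has non-constant ratio $e^{z_1+\cdots+z_n}$, so without (i) nothing forces $h_i$ to be constant. Your only use of (i) is the Zalcman argument, and it yields no contradiction. Take $g_k(\xi)=f(w_k+\rho_k\xi)\to g$ and $k$ with $\partial_{\xi_k}g\not\equiv0$. Hurwitz's theorem shows that a zero of $\partial_{\xi_k}g$ is a zero of $g$, by (i). It is also a $b$-point of $g$, by $\partial_{z_k}f=b\Rightarrow f=b$, because $\partial_{\xi_k}g_k-\rho_kb\to\partial_{\xi_k}g$. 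Since $b\neq 0$, $\partial_{\xi_k}g$ is zero-free. Then $g=b\Rightarrow\nabla g=0$ shows that $g$ omits $b$. With $\rho(g)\le1$ this gives $g=b+e^{\lambda\cdot\xi+c}$, which satisfies every transferred condition.

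No normality argument can repair this. The functions $f_m(z)=\frac1m e^{m(z_1+\cdots+z_n)}+b-\frac bm$ satisfy (i) vacuously and (ii), since $\partial_{z_i}f_m-b=m(f_m-b)$. Yet $\{f_m\}$ is not normal at any point of $\{\mathrm{Re}(z_1+\cdots+z_n)=0\}$: it tends to $b$ on one side and to $\infty$ on the other. Moreover, $f_m(w_m+\xi/m)\to b+e^{\xi_1+\cdots+\xi_n}$ when $e^{m(w_{m,1}+\cdots+w_{m,n})}=m$. So the finite-order bound must come from a global argument about the single function $f$. Your fallback, restricting to lines with ``uniform'' one-variable estimates, is not such an argument.

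The paper does not close this gap either. It gets $\rho(f)\le1$ ``in the same way as Theorem~\ref{t1.2}'', i.e.\ from Lemma~\ref{ln2.3a} with the value pair $(b,0)$. In exactly that configuration, $\{f_m\}$ satisfies the lemma's hypotheses without being normal.

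\textbf{The second half of Step~2 is also unsupported.} Comparing characteristic functions in $\partial_{z_i}f-b=e^{h_i}(f-b)$ only gives $T(r,e^{h_i})\le 2T(r,f)+O(\log r)$, i.e.\ $\deg h_i\le\rho(f)$. That still allows linear $h_i$. What you need is a Br\"uck/Gundersen--Yang type theorem, which is the role of Lemma~\ref{ln2.11}. In several variables it must use $\partial_{z_i}f\not\equiv0$, which (i) supplies. For example, $f=b-be^{-z_2}$ on $\mathbb{C}^2$ has finite order and shares $b$ CM with $\partial_{z_1}f\equiv0$, yet its ratio is $e^{z_2}$.

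Granting finite order, here is one workable route. Restrict to lines $t\mapsto z^0+te_i$; these restrictions are non-constant for generic $z^0$. The one-variable Gundersen--Yang theorem then gives $\partial_{z_i}h_i\equiv0$. Now integrate $\partial_{z_i}(f-b)=e^{h_i}(f-b)+b$ in $z_i$. The solution is $C(\hat z_i)\exp(z_ie^{h_i})-be^{-h_i}$ with $C\not\equiv0$. Comparing its restrictions to $z_i=0$ and $z_i=1$ shows it has finite order only if $h_i$ is constant.
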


\begin{rem} The following example shows that the number of shared values cannot be reduced to one in Theorem \ref{t1.1}.
\end{rem}

\begin{exm} Let 
\begin{align*}
f(z_1,\ldots,z_n)=e^{e^{z_1+\cdots+z_n}}\int_{0}^{z_1+\cdots+z_n}e^{-e^t}(1-e^t)dt.
\end{align*}

Note that for all $i=1,2,\ldots,n$, we have
\begin{align*}
\partial_{z_i}(f(z))=e^{z_1+\cdots+z_m}(f(z)-1)+1
\end{align*}
and so for all $i=1,2,\ldots,n$, we get
\begin{align*}
\partial_{z_i}(f(z))-1=e^{z_1+\cdots+z_n}(f(z)-1).
\end{align*}

Clearly $f(z)=1 \rightleftharpoons \partial_{z_i}(f(z))=1$ for $i=1,2,\ldots,n$, but $f(z)$ does not satisfy any case of Theorem \ref{t1.1}.
\end{exm}

\begin{rem}
Following example shows that \textrm{Theorem \ref{t1.1}} does not hold in general for a non-constant meromorphic function $f(z)$ in $\mathbb{C}^n$.
\end{rem}

\begin{exm} Let 
\[f(z)=\frac{z_1+z_2+\ldots+z_n}{1-e^{-(z_1+z_2+\ldots+z_n)}},\]
$a=0$ and $b=1$. Clearly 
\begin{align*}
\frac{\partial f(z)}{\partial z_i}=\frac{1-(z_1+z_2+\ldots+z_n+1)e^{-(z_1+z_2+\ldots+z_n)}}{\left(1-e^{-(z_1+z_2+\ldots+z_n)}\right)^2}
\end{align*}
and so $f(z)=a \Rightarrow \partial_{z_i}(f(z))=a$ for $i=1,2,\ldots,n$. Note that
\begin{align*}
f(z)-b=\frac{z_1+z_2+\ldots+z_n-1+e^{-(z_1+z_2+\ldots+z_n)}}{1-e^{-(z_1+z_2+\ldots+z_n)}}
\end{align*}
and
\begin{align*}
\partial_{z_i}(f(z))-b=-e^{-(z_1+z_2+\ldots+z_n)}\;\;\frac{z_1+z_2+\ldots+z_n-1+e^{-(z_1+z_2+\ldots+z_n)}}{(1-e^{-(z_1+z_2+\ldots+z_n)})^{2}}.
\end{align*}

Clearly $f(z)=b \rightleftharpoons \partial_{z_i}(f(z))=b$ for $i=1,2,\ldots,n$, but $f(z)$ does not satisfy any case of Theorem \ref{t1.1}.
\end{exm}

\section{\bf{Basic Notations in several complex variables}}
Nevanlinna theory, originally formulated for meromorphic functions of a single complex variable, has been extended to the setting of several complex variables to study the value distribution of meromorphic maps from $\mathbb{C}^n$ into complex projective spaces or complex manifolds. It provides profound insights into: $(a)$ The growth and value distribution of meromorphic mappings; $(b)$ Uniqueness and rigidity phenomena for solutions of linear and nonlinear PDEs and $(c)$ Criteria for compactness and convergence in families of meromorphic maps. Its applications span complex analysis, partial differential equations, complex geometry, and mathematical physics, offering both powerful techniques and elegant results. These references \cite{Banerjee-Majumder-2026}, \cite{Cao-Korhonen-2016}, \cite{PVD1}-\cite{PVD3}, \cite{BQL1}-\cite{FL1}, \cite{MDP}, \cite{Majumder-2026}, \cite{Majumder-Sarkar-2026}, \cite{Majumder-Sarkar-2027}, \cite{GS2} provide a foundation for understanding the current state of research in Nevanlinna value distribution theory in several complex variables.

\medskip
Let $G\neq \varnothing$ be an open subset of $\mathbb{C}^n$. Let $f$ be a holomorphic function in $\mathbb{C}^n$.
Take $a\in G$. Let $G_a$ be the connectivity component of $G$ containing $a$. Assume $f\mid_{G_a}\not\equiv 0$. Then a series
$f(z)=\sum_{\lambda=p}^{\infty}P_{\lambda}(z-a)$ converges on some neighborhood of $a$ and represents $f$ on this neighborhood. Here 
$P_{\lambda}$ is a homogeneous polynomial of degree $\lambda$ and $P_{p}\not\equiv 0$. The polynomials $P_{\lambda}$ depend on $f$ and $a$ only. The number $\mu^0_f(a)=p$ is called the zero multiplicity of $f$ at $a$ (see \cite[pp. 12]{Stoll-1974}).

\medskip
Let $f$ be a meromorphic function on $G$.
Take $a\in G$ and $c\in\mathbb{C}\cup\{\infty\}$. Let $G_a$ be the component of $G$ containing $a$. If $0\equiv f\mid_{G_a}\not\equiv c$, define $\mu^c_f(a)=0$. Assume $0\not\equiv f\mid_{G_a}\not\equiv c$. Then an open connected neighborhood $U$ of $a$ in $G$ and holomorphic functions $g\not\equiv 0$ and $h\not\equiv 0$ exist on $U$ such that $h. f\mid_U=g$ and $\dim g^{-1}(0)\cap h^{-1}(0)\leq n-2$, where $n=\dim(\mathbb{C}^n)$. Therefore the $c$-multiplicity of $f$ is just $\mu^c_f=\mu^0_{g-ch}$ if $c\in\mathbb{C}$ and $\mu^c_f=\mu^0_h$ if $c=\infty$. The function $\mu^c_f:G\to \mathbb{Z}$ is nonnegative and is called the $c$-divisor of $f$ (see \cite[pp. 12]{Stoll-1974}).
If $f\not\equiv 0$ on each component of $G$, then $\nu=\mu_f=\mu^0_f-\mu^{\infty}_f$ is called the divisor of $f$. The function $f$ is holomorphic on $G$ if and only if $\mu_f\geq 0$. We define $\displaystyle \operatorname{supp}\; \nu=\ol{\{z\in G: \nu(z)\neq 0\}}$.

\medskip
If $A\subseteq \mathbb{C}^n$ and $r\geq 0$, we define (see \cite[pp. 6]{Stoll-1974})
$A[r]=\{z\in A: ||z||\leq r\}$, $A(r)=\{z\in A: ||z||<r\}$, $A\langle r\rangle=\{z\in A: ||z||=r\}$ and $\tau:W\to \mathbb{R}_+$ by $\tau(z)=||z||^2$.
On $\mathbb{C}^n$, the exterior derivative $d$ splits $d= \partial+ \bar{\partial}$ and twists to $d^c= \frac{\iota}{4\pi}\left(\bar{\partial}- \partial\right)$. Clearly $dd^{c}= \frac{\iota}{2\pi}\partial\bar{\partial}$. The standard Kaehler metric on $\mathbb{C}^n$ is given by $\upsilon=dd^c\tau>0$. On $\mathbb{C}^n\backslash  \{0\}$, we define
$\displaystyle \omega=dd^c\log \tau\geq 0$ and $\sigma=d^c\log \tau \wedge \omega^{n-1}$, where $n=\dim(\mathbb{C}^n)$ (see \cite[pp. 6]{Stoll-1974}). 

For $t>0$, the counting function $n_{\nu}$ is defined by
\begin{align*}
 n_{\nu}(t)=t^{-2(n-1)}\int_{A[t]}\nu \upsilon^{n-1},
 \end{align*}
where $A=\text{supp}\;\nu$. The valence function of $\nu$ is defined by
\[N_{\nu}(r)=N_{\nu}(r,r_0)=\int_{r_0}^r n_{\nu}(t)\frac{dt}{t}\;\;(r\geq r_0).\]

\medskip
Also we write $N_{\mu_f^a}(r)=N(r,a;f)$ if $a\in\mathbb{C}$ and $N_{\mu_f^a}(r)=N(r,f)$ if $a=\infty$.

\smallskip
With the help of the positive logarithm function, we define the proximity function of $f$ by
\begin{align*}
m(r, f)=\mathbb{C}^n\langle r; \log^+ |f| \rangle=\int_{\mathbb{C}^n\langle r\rangle} \log^+ |f|\;\sigma.
\end{align*}

The characteristic function of $f$ is defined by $T(r, f)=m(r,f)+N(r,f)$.
We define $m(r,a;f)=m(r,f)$ if $a=\infty$ and $m(r,a;f)=m(r,1/(f-a))$ if $a$ is finite complex number. If $a\in\mathbb{C}$, then the first main theorem states that $m(r,a;f)+N(r,a;f)=T(r,f)+O(1)$, where $O(1)$ denotes a bounded function when $r$ is sufficiently large. We define the order of $f$ by
\begin{align*}
\rho(f):=\limsup _{r \rightarrow \infty} \frac{\log T(r, f)}{\log r}.
\end{align*}

Let $U\subset \mathbb{C}^n$ be an open subset. A closed subset $A\subset U$ is said to be analytic,
if for each $a\in A$, there are a finite number of holomorphic functions $f_1(z),f_2(z),\ldots,f_l(z)$ defined in a neighbourhood $N(a)$ of $a$ such that (see \cite[pp. 42]{NW})
\begin{align*}
A\cap N(a)=\{z\in N(a):f_1(z)=f_2(z)=\cdots=f_l(z)=0\}.
\end{align*}

If $f_j(z),1\leq j\leq l$ can be taken so that their differentials at $a\in A$, $df_1(a), df_2(a), \ldots,df_l(a)$ are linearly independent, then $a$ is called a regular point of $A$. A point of $A$ which is not regular called a singular point. The subset of all singular points of $A$ is denoted by $S(A)$. Set $R(A)=A\backslash  S(A)$. If $S(A)=\varnothing$, then $A$ is said to be regular.

\section{\bf{Auxiliary Lemmas}}
For every function $\varphi$ of class $C^2(\Omega)$, we define at each point $z\in\Omega$ an hermitian form (see \cite{PVD1,PVD2})
\begin{align*}
L_z(\varphi,\nu)=\sideset{}{_{l,k=1}^n}{\sum} \frac{\partial^2 \varphi(z)}{\partial z_k \partial \ol{z}_l}\nu_k \ol{\nu}_l,
\end{align*}
which is called the Levi form of the function $\varphi$. For a holomorphic function $f$ in $\Omega$, we define
\begin{align*}
f^{\#}(z)=\max_{||\nu||=1}\sqrt{L_z(\log (1+|f(z)|^2),\nu)}
\end{align*}
where $\nu=(\nu_1,\ldots,\nu_n)$. This quantity is well defined, since the Levi form $L_z(\log (1+|f(z)|^2),\nu)$ is non-negative for all $z\in\Omega$. Let ${\bf \nabla} f=(f_{z_1},\ldots,f_{z_n})$.

Applying Cauchy-Schwarz inequality, it is easy to prove that (see \cite[Remark 1]{ZZY1})
\begin{align*}
f^{\#}(z)=\sup_{||\nu||=1}\frac{|\langle {\bf \nabla} f(z),\nu\rangle|}{1+|f(z)|^2}=\frac{||{\bf \nabla} f(z)||}{1+|f(z)|^2}=\frac{\sqrt{\sum_{j=1}^n|f_{z_j}(z)|^2}}{1+|f(z)|^2},\;\;z\in\Omega,
\end{align*}
where $\langle z, w\rangle=\sum_{j=1}^n z_j\ol{w}_j$ is the Hermitian scalar product for $z=(z_1,\ldots,z_n)\in\mathbb{C}^n$ and $w=(w_1,\ldots,w_n)\in\mathbb{C}^n$.
\medskip

A family $\mathcal{F}$ of holomorphic functions on a domain $\Omega\subset \mathbb{C}^n$ is normal in $\Omega$ if every sequence of functions $\{f_j\}\subseteq \mathcal{F}$ contains either a subsequence which converges to a limit function $f\neq \infty$ uniformly on each compact subset of $\Omega$, or a subsequence which converges uniformly to $\infty$ on each compact subset.

A family $\mathcal{F}$ is said to be normal at a point $z_0\in \Omega$ if it is normal in some neighbourhood of $z_0$. A family of analytic functions $\mathcal{F}$ is normal in a domain $\Omega$ if and only if $\mathcal{F}$ is normal at each point of $\Omega$.

\medskip
We recall the Marty's characterization of normal families in terms of the spherical metric.

\begin{lem}\label{ln2.1}\cite[Theorem 2.1]{PVD1} A family $\mathcal{F}$ of functions holomorphic on $\Omega$ is normal on $\Omega\subset \mathbb{C}^n$ if and only if for each compact subset $K\subset \Omega$, there exists $M(K)$ such that at each point $z\in K$,
$f^{\#}(z)\leq M(K)$ $\forall$ $f\in\mathcal{F}$.
\end{lem}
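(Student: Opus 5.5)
This is the several-variable version of Marty's criterion. I will prove it by reducing to the equicontinuity criterion (Arzelà–Ascoli) for maps into the Riemann sphere $\widehat{\mathbb{C}}$ with the chordal metric $\chi$. The key observation is that $f^{\#}$, in the closed form $\|\nabla f\|/(1+|f|^2)$, is exactly the operator norm of the differential of $f$ measured from the Euclidean metric on $\Omega$ to the spherical metric. Concretely, for a $C^1$ path $\gamma:[0,1]\to\Omega$ we have
\[
\chi\bigl(f(\gamma(1)),f(\gamma(0))\bigr)\le \int_0^1 \frac{|\langle \nabla f(\gamma(t)),\overline{\gamma'(t)}\rangle|}{1+|f(\gamma(t))|^2}\,dt\le \int_0^1 f^{\#}(\gamma(t))\,\|\gamma'(t)\|\,dt,
\]
up to the normalising constant of $\chi$. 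This follows from the identity $f^{\#}(z)=\sup_{\|\nu\|=1}|\langle\nabla f(z),\nu\rangle|/(1+|f(z)|^2)$ recorded above.

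\emph{Sufficiency.} Assume $f^{\#}\le M(K)$ on compacts. Take a compact $K$ and a slightly larger compact neighbourhood $K'$ in which every pair of sufficiently close points of $K$ is joined by a segment. The inequality above gives $\chi(f(z),f(w))\le M(K')\|z-w\|$ for all $f\in\mathcal F$ and close $z,w\in K$. So $\mathcal F$ is equicontinuous into the compact metric space $\widehat{\mathbb{C}}$. Arzelà–Ascoli together with a diagonal argument over an exhaustion of $\Omega$ then extracts, from any sequence, a subsequence converging locally uniformly in $\chi$ to a continuous $g:\Omega\to\widehat{\mathbb{C}}$.

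The remaining step is to show that either $g\equiv\infty$ or $g$ is finite-valued and holomorphic, with the convergence then Euclidean-uniform on compacts. For this, restrict to complex lines: on each one-variable slice the classical one-variable fact (via Hurwitz applied to $1/f_j$) says the limit is either identically $\infty$ or holomorphic. The set $\{g=\infty\}$ is closed. It is also open: if $g(z_0)=\infty$, then $1/f_j\to 1/g$ uniformly near $z_0$ with $1/f_j$ holomorphic and eventually zero-free, so $1/g$ is holomorphic near $z_0$. Hurwitz's theorem in several variables (a zero set of a holomorphic function on a polydisc is either empty or of dimension $n-1$, and zero-free functions converge to a zero-free function or $\equiv 0$) forces $1/g\equiv0$ near $z_0$. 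Connectedness of $\Omega$ then gives the dichotomy. In the finite case, spherical uniform convergence to a bounded continuous limit implies Euclidean uniform convergence on compacts, and a locally uniform limit of holomorphic functions is holomorphic.

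\emph{Necessity.} Suppose $\mathcal F$ is normal but $\sup_{K}f^{\#}$ is unbounded for some compact $K$. Pick $f_j\in\mathcal F$ and $z_j\in K$ with $f_j^{\#}(z_j)\to\infty$ and $z_j\to z_0$, and pass to a convergent subsequence. If $f_j\to f$ locally uniformly with $f$ finite, then the Cauchy estimates give $\nabla f_j\to\nabla f$ uniformly near $z_0$, so $f_j^{\#}\to f^{\#}$ uniformly there, which is bounded — a contradiction. If $f_j\to\infty$, then $h_j=1/f_j$ is holomorphic near $z_0$ for large $j$ and converges uniformly to $0$. Since $f^{\#}=(1/f)^{\#}$, the Cauchy estimates applied to $h_j$ bound $h_j^{\#}$, again a contradiction.

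The main obstacle is the dichotomy step in sufficiency, namely excluding a limit that is $\infty$ only on part of $\Omega$. I would handle it with the several-variable Hurwitz argument above, applied to $1/f_j$ locally.
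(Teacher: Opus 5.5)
The paper gives no proof of this lemma. It is quoted as Theorem 2.1 of Dovbush \cite{PVD1} and used as a black box, namely in the proofs of Theorems \ref{t1.1} and \ref{t1.2}, to pass from normality of the translates $f(\omega+z)$ to a uniform bound on $f^{\#}$. So there is no argument in the paper to compare with. Your proposal is the standard proof of Marty's criterion carried over to $\mathbb{C}^n$, and it is correct.
\begin{itemize}
\item The path estimate (the spherical length of $f\circ\gamma$ is at most $\int f^{\#}(\gamma)\|\gamma'\|$) gives local chordal Lipschitz bounds. These give equicontinuity into $\widehat{\mathbb{C}}$, and Arzel\`a--Ascoli applies.
\item The open--closed argument on $\{g=\infty\}$ gives the dichotomy on the domain.
\item Necessity follows from the Cauchy estimates together with $(1/f)^{\#}=f^{\#}$.
\end{itemize}

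Two steps need one more line each.

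First, your parenthetical justification of Hurwitz's theorem in several variables (zero sets have dimension $n-1$) does not by itself give the conclusion. The clean argument goes as follows. Suppose $h_j\to h$ uniformly on a ball $B$ centred at $z_0$, each $h_j$ is zero-free, $h(z_0)=0$, and $h\not\equiv 0$ on $B$. Then some complex line $L$ through $z_0$ has $h|_{L\cap B}\not\equiv 0$; otherwise $h$ vanishes on every line through $z_0$, hence on all of $B$. One-variable Hurwitz on the disc $L\cap B$ then gives a contradiction.

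Second, in the openness step, say explicitly how chordal convergence becomes Euclidean convergence. From $g(z_0)=\infty$ and uniform chordal convergence you get a ball around $z_0$ on which $|f_j|>1$ for all large $j$ and $|g|>1$. There $1/f_j$ and $1/g$ lie in the closed unit disc, where $\chi$ is comparable to the Euclidean metric. Since $\chi(1/u,1/v)=\chi(u,v)$, it follows that $1/f_j\to 1/g$ uniformly in the Euclidean sense. This is what makes $1/g$ holomorphic and lets the Hurwitz step apply.

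With these two additions your proof is self-contained.
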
 

In 2021, Dovbush \cite{PVD2} obtained an appropriate generalization of Zalcman's Lemma \cite{LZ1} to several complex variables and the result is as follows
\begin{lem}\label{ln2.1a} \cite[Theorem 1.1]{PVD2} Suppose that a family $\mathcal{F}$ of functions holomorphic on $\Omega\subset \mathbb{C}^n$ is not normal at some point $z_0\in\Omega$. Then there exist sequences $f_j\in\mathcal{F}$, $z_j\to z_0$, $\rho_j\to 0$ such that the sequence
\[g_j(z):=\rho_j^{-\alpha}f_j(z_j+\rho_jz)\;\;(0\leq \alpha<1\;\;\text{arbitrary})\]
converges locally uniformly in $\mathbb{C}^n$ to a non-constant entire function $g$ satisfying $g^{\#}(z)\leq g^{\#}(0)=1$.
\end{lem}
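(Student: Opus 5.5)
The plan is to adapt the one-variable Pang--Zalcman rescaling argument, with Lemma \ref{ln2.1} (Marty's criterion) as the only input from several complex variables. The Euclidean norm $\|\cdot\|$ replaces $|\cdot|$, and $f^{\#}=\|{\bf \nabla} f\|/(1+|f|^2)$ replaces the spherical derivative.

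\emph{Normalization and the key quantity.} After a translation and dilation I may assume $z_0=0$ and that the closed unit ball lies in $\Omega$. For $f\in\mathcal{F}$, $\|z\|<1$ and $\rho>0$ I set
\[
\Phi_f(z,\rho)=\frac{\rho^{1-\alpha}\,\|{\bf \nabla} f(z)\|}{1+\rho^{-2\alpha}|f(z)|^2},
\]
which is exactly $g^{\#}(0)$ for $g(w)=\rho^{-\alpha}f(z+\rho w)$. Since $0\le\alpha<1$, both $\rho^{1-\alpha}$ and $(1+\rho^{-2\alpha}|f|^2)^{-1}$ are nondecreasing in $\rho$. Hence $\rho\mapsto\Phi_f(z,\rho)$ is continuous and nondecreasing, tends to $0$ as $\rho\to0$, and is strictly increasing wherever ${\bf \nabla} f(z)\neq0$.

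\emph{Step 1 (main obstacle).} I must show that non-normality at $0$ yields $f_j\in\mathcal{F}$ with
\[
\sup_{\|z\|<1}\Phi_{f_j}\bigl(z,\,(1-\|z\|)/2\bigr)\longrightarrow\infty .
\]
For $\alpha=0$ this follows from Lemma \ref{ln2.1}, since non-normality on every ball around $0$ gives $f_j^{\#}(w_j)\to\infty$ with $w_j\to0$. For $\alpha>0$ the extra factor $\rho^{-2\alpha}$ in the denominator is the difficulty. I would argue by contradiction. Suppose $\Phi_f(z,(1-\|z\|)/2)\le M$ for all $f$ in a sequence. Then on each slice through a point where $|f|$ is large, the function $|f|^{\,1-\alpha}$ has controlled gradient, because $\Phi$ bounds $\|{\bf \nabla}(f^{1-\alpha})\|$ in the region $|f|\ge\rho^{\alpha}$. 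This gives locally uniform equicontinuity of $f$ with respect to the spherical metric, after distinguishing whether $|f|$ is small or large. Zeros of a holomorphic function have multiplicity at least $1>\alpha$, which is exactly what the one-variable Pang--Zalcman argument needs. I would carry this out by restricting to complex lines through each point and using the one-variable estimate. Equicontinuity then contradicts non-normality via the Arzel\`a--Ascoli form of normality. I expect this step to require the most care.

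\emph{Step 2: choice of $z_j,\rho_j$.} By continuity and monotonicity, for large $j$ I choose $\rho_j$ to be the largest $\rho$ such that
\[
\sup_{\|z\|\le 1-2\rho}\Phi_{f_j}(z,\rho)\ \ge\ 1\qquad(\text{more precisely } =1 \text{ at the extremal } \rho).
\]
I then let $z_j$ be a point attaining the supremum, so that $\Phi_{f_j}(z_j,\rho_j)=1$. Step 1 forces $\rho_j\to0$. Running the same argument on balls of radius $\varepsilon\to0$ and taking a diagonal subsequence forces $z_j\to0$.

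\emph{Step 3: the limit.} For fixed $R$ and $\|w\|\le R$, maximality of $\rho_j$ together with monotonicity in $\rho$ gives $g_j^{\#}(w)=\Phi_{f_j}(z_j+\rho_jw,\rho_j)\le 1+o(1)$, with $g_j^{\#}(0)=1$. By Lemma \ref{ln2.1}, $\{g_j\}$ is normal on $\mathbb{C}^n$. The limit cannot be $\equiv\infty$: in that case $g_j^{\#}(0)$ would tend to $0$, since $\|{\bf \nabla} g_j(0)\|/|g_j(0)|^2\to0$ by the Cauchy estimates applied to $1/g_j$. So a subsequence converges locally uniformly to an entire $g$ with $g^{\#}\le1=g^{\#}(0)$. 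In particular $g$ is non-constant.
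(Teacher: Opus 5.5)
The paper gives no proof of this lemma; it simply quotes Dovbush. So the comparison is with the standard Zalcman/Pang--Zalcman rescaling, and measured against that, your Steps 2--3 have a genuine gap.

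\emph{The choice of $\rho_j$.} As written, ``the largest $\rho$ with $\sup_{\|z\|\le 1-2\rho}\Phi_{f_j}(z,\rho)\ge 1$'' does not tend to $0$; Step 1 actually forces the opposite. For $f_j(z)=jz_1$, which is not normal at $0$, this supremum equals $\rho^{1-\alpha}j$. So for $j\ge2$ your $\rho_j$ is $1/2$, $z_j=0$, and $g_j(w)=2^{\alpha-1}jw_1$ diverges. In general, any points $w_j\to0$ with $\Phi_{f_j}(w_j,(1-\|w_j\|)/2)\ge1$ (which Step 1 supplies) give $\rho_j\ge(1-\|w_j\|)/2\to1/2$.

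\emph{Taking the smallest crossing instead.} You would need the smallest crossing of level $1$, but then Step 3 breaks. At that $\rho_j$ you only know $\Phi_{f_j}(\cdot,\rho_j)\le1$ on $\|z\|\le1-2\rho_j$. At a point $z'=z_j+\rho_jw$ outside that ball, monotonicity in $\rho$ runs the wrong way. The available bound $\Phi_{f_j}(z',\rho')<1$ with $\rho'=(1-\|z'\|)/2<\rho_j$ only gives $\Phi_{f_j}(z',\rho_j)\le(\rho_j/\rho')^{1+\alpha}$, which blows up as $\|z'\|\to1$. Nothing in your construction forces $(1-2\rho_j-\|z_j\|)/\rho_j\to\infty$. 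Hence $g_j^{\#}$ is controlled only on a bounded ball in $w$, and no entire limit follows. Shrinking to balls of radius $\varepsilon$ and diagonalizing moves $z_j$ to $0$, but it reproduces the same boundary problem on each ball.

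\emph{The initial dilation.} This is also not harmless when $\alpha>0$. Undoing a dilation by $c$ multiplies the limit by $c^{-\alpha}$, which destroys $g^{\#}(0)=1$.

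\emph{The missing idea} is Zalcman's device of making the scale proportional to the distance to the boundary. Fix $r\le1$ with $\overline{B}(z_0,r)\subset\Omega$. Put $d(z)=r-\|z-z_0\|$ and
$M_j(t)=\max_{\|z-z_0\|\le r}\Phi_{f_j}(z,t\,d(z))$ for $0\le t\le1$, interpreting $\Phi_{f_j}(z,0)=0$.

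Two elementary inequalities do all the work:
\begin{enumerate}
\item[(a)] for $\rho\le1$, $\Phi_f(z,\rho)\ge\rho^{1+\alpha}f^{\#}(z)$;
\item[(b)] for $s\ge1$, $\Phi_f(z,s\rho)\le s^{1+\alpha}\Phi_f(z,\rho)$.
\end{enumerate}
Inequality (a) together with Marty's criterion (Lemma~\ref{ln2.1}) gives $M_j(1)\to\infty$. That is your whole Step 1; it needs no equicontinuity, complex-line, or multiplicity argument. Moreover $M_j$ is continuous and nondecreasing, $M_j(0)=0$, and $M_j(t)\ge t^{1+\alpha}M_j(1)$.

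Now choose $t_j$ with $M_j(t_j)=1$, let $z_j$ be a maximizer, and set $\rho_j=t_jd(z_j)$. Then:
\begin{enumerate}
\item[(i)] $t_j\le M_j(1)^{-1/(1+\alpha)}\to0$;
\item[(ii)] for $\|w\|\le R$, $d(z_j+\rho_jw)\ge(1-t_jR)\,d(z_j)$;
\item[(iii)] by monotonicity in $\rho$ and (b), $g_j^{\#}(w)\le M_j\bigl(t_j/(1-t_jR)\bigr)\le(1-t_jR)^{-1-\alpha}\to1$, while $g_j^{\#}(0)=1$.
\end{enumerate}
Your Step 3 then applies verbatim. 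Finally, you get $z_j\to z_0$ by running this on $B(z_0,1/k)$ and applying Marty to the limits $g^{(k)}$, which all satisfy $g^{(k)\#}\le g^{(k)\#}(0)=1$, before diagonalizing.
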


In \cite{Majumder-2026}, Majumder studied the relation between the maximum modulus and the spherical metric of a holomorphic function in $\mathbb{C}^n$ and obtained the following result.
\begin{lem}\label{ln2.2}\cite{Majumder-2026} Let $f$ be a holomorphic function in $\mathbb{C}^n$. If $f^{\#}$ is bounded on $\mathbb{C}^n$, then $\rho(f)\leq 1$.
\end{lem}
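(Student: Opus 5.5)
The plan is to reduce the statement to the one–variable theorem of Clunie and Hayman by restricting $f$ to complex lines through the origin. For a unit vector $\xi\in\mathbb{C}^n$ with $\|\xi\|=1$, put $g_\xi(t)=f(t\xi)$, $t\in\mathbb{C}$. This is an entire function of one variable, and $g_\xi'(t)=\langle \nabla f(t\xi),\ol{\xi}\rangle$. By the Cauchy–Schwarz inequality and the formula for $f^{\#}$ recalled above,
\[
g_\xi^{\#}(t)=\frac{|g_\xi'(t)|}{1+|g_\xi(t)|^2}\leq \frac{\|\nabla f(t\xi)\|}{1+|f(t\xi)|^2}=f^{\#}(t\xi)\leq M,
\]
where $M=\sup_{\mathbb{C}^n}f^{\#}<\infty$. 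Thus every restriction $g_\xi$ is an entire function of one variable with spherical derivative bounded by the same constant $M$, and with the same value $g_\xi(0)=f(0)$.

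Next I invoke the Clunie–Hayman theorem. It states that an entire function $g$ in $\mathbb{C}$ with $g^{\#}\leq M$ satisfies $\log M(r,g)\leq K r$ for $r\geq 1$. The key point is that the constant $K$ can be chosen to depend only on $M$ and $|g(0)|$. I would re-examine their proof to confirm this dependence; it is the main obstacle, namely uniformity of the bound in $\xi$. If tracking constants is awkward, I would argue by contradiction with normal families instead. Suppose there are unit vectors $\xi_k$ and radii $r_k\to\infty$ with $\log M(r_k,g_{\xi_k})\geq k r_k$. The family $\{g_{\xi}\}_{\|\xi\|=1}$ has uniformly bounded spherical derivative and the fixed value $f(0)$ at $0$. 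By Marty's criterion (Lemma \ref{ln2.1} with $n=1$), it is normal, and no subsequence tends to $\infty$ because of the value at $0$. Passing to $\xi_k\to\xi_0$, the limit is $g_{\xi_0}$. I would then combine this with a rescaling of the form $t\mapsto g_{\xi_k}(r_k t)$ along the lines of the Clunie–Hayman argument to reach a contradiction. Either way, the target is a bound $\log|f(z)|\leq K\|z\|+K'$ on all of $\mathbb{C}^n$ with constants independent of direction. This holds because $\log|f(z)|=\log|g_{z/\|z\|}(\|z\|)|$.

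Finally I convert the bound into the order estimate. For holomorphic $f$ we have $N(r,f)=0$, so
\[
T(r,f)=m(r,f)=\int_{\mathbb{C}^n\langle r\rangle}\log^+|f|\,\sigma\leq \log^+\max_{\|z\|=r}|f(z)|\leq Kr+K',
\]
using that $\sigma$ is a probability measure on the sphere. Hence $\log T(r,f)\leq \log r+O(1)$, and therefore $\rho(f)\leq 1$. If $f$ is constant the statement is trivial, so the argument needs only the non-constant case; the restriction argument itself does not use non-constancy.
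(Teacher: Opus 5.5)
Your reduction to complex lines is correct, since $|g_\xi'(t)|\le\|\nabla f(t\xi)\|$ gives $g_\xi^{\#}\le M$. The final step $T(r,f)=m(r,f)\le\log^+\max_{\|z\|=r}|f(z)|$ is also correct. The paper does not prove this lemma; it imports it from Majumder's earlier work, so your argument has to stand on its own.

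The gap is the step you flag yourself. Everything depends on a bound $\log^+|g_\xi(t)|\le K|t|+K'$ with $K,K'$ independent of $\xi$, and you do not establish it. The Clunie--Hayman theorem, as you invoke it, is a statement about one function at a time. Unless you check that the constant, and the range of $r$ where the bound holds, depend only on $M$ and $|g(0)|$, you cannot average over the sphere.

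Your fallback does not close this. Marty's criterion makes $\{g_\xi\}$ normal, but locally uniform convergence on compacta carries no information about $M(r_k,g_{\xi_k})$ when $r_k\to\infty$. Moreover, the rescaled functions $t\mapsto g_{\xi_k}(r_kt)$ have spherical derivatives as large as $Mr_k\to\infty$. So they are not a normal family and give no limit function to contradict.

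The missing idea is a quantitative one-variable estimate, and it has a short proof.

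\begin{itemize}
\item Let $g$ be entire and non-constant with $g^{\#}\le M$, and let $E=\{t:|g(t)|\le 1\}$. This set is closed and non-empty, since otherwise $1/g$ would be a bounded non-constant entire function.
\item If $|g(t_0)|>1$, let $d=\operatorname{dist}(t_0,E)>0$ and choose $w_0\in E$ with $|w_0-t_0|=d$. Then $|g(w_0)|=1$, and $u=\log|g|$ is positive and harmonic on $|t-t_0|<d$ with $u(w_0)=0$.
\item With $e=(t_0-w_0)/d$, Harnack's inequality gives $u(w_0+se)\ge \frac{s}{2d-s}\,u(t_0)$ for $0<s<d$. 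Hence $|g'(w_0)|=|\nabla u(w_0)|\ge u(t_0)/(2d)$. On the other hand $|g'(w_0)|=2g^{\#}(w_0)\le 2M$.
\item Therefore $\log|g(t_0)|\le 4Md\le 4M(|t_0|+d_0)$, where $d_0=\operatorname{dist}(0,E)$. If $|t_0|>d_0/2$, this is at most $12M|t_0|$.
\item If $|t_0|\le d_0/2$, which forces $|g(0)|>1$, apply Harnack on $|t|<d_0$, where $\log|g|>0$. This gives $\log|g(t_0)|\le 3\log|g(0)|$.
\end{itemize}

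Altogether $\log^+|g(t)|\le 12M|t|+3\log^+|g(0)|$ on $\mathbb{C}$. Every $g_\xi$ satisfies $g_\xi^{\#}\le M$ and $g_\xi(0)=f(0)$, and constant $g_\xi$ are trivial. So this is exactly the uniform bound you need. Your last paragraph then yields $T(r,f)\le 12Mr+O(1)$, that is, $\rho(f)\le 1$.
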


\begin{lem}\label{ln2.3} \cite{Majumder-2026} Let $\mathcal{F}$ be a family of holomorphic functions on a domain $\Omega\subset \mathbb{C}^n$ and let $a$ and $b$ be two distinct finite complex numbers. For each $f\in \mathcal{F}$, if $f=a\Rightarrow \partial_{z_i}(f)=a$ and $f=b\Leftrightarrow \partial_{z_i}(f)=b$ for $i=1,2,\ldots,n$, then $\mathcal{F}$ is normal in $\Omega$.
\end{lem}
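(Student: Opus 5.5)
The plan is to argue by contradiction, following the standard Zalcman–Pang scheme as adapted to $\mathbb{C}^n$ by Lemma \ref{ln2.1a}. Suppose $\mathcal{F}$ is not normal at some $z_0\in\Omega$. We apply Lemma \ref{ln2.1a} with $\alpha=0$. This yields $f_j\in\mathcal{F}$, $z_j\to z_0$ and $\rho_j\to 0$ such that
\[g_j(z)=f_j(z_j+\rho_j z)\]
converges locally uniformly on $\mathbb{C}^n$ to a non-constant entire function $g$ with $g^{\#}\le g^{\#}(0)=1$. By Lemma \ref{ln2.2}, $g$ has order at most $1$. Note that $\partial_{z_i}g_j(z)=\rho_j(\partial_{z_i}f_j)(z_j+\rho_j z)$.

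The first step shows that $g$ omits $a$. Suppose $g(\zeta_0)=a$. Since $g\not\equiv a$, Hurwitz's theorem, applied on a complex line through $\zeta_0$ on which $g-a\not\equiv 0$, gives points $\zeta_j\to\zeta_0$ with $g_j(\zeta_j)=a$. Then $f_j(z_j+\rho_j\zeta_j)=a$, so by hypothesis $\partial_{z_i}f_j=a$ there. Hence
\[\partial_{z_i}g_j(\zeta_j)=\rho_j a\to 0,\]
and so $\partial_{z_i}g(\zeta_0)=0$ for every $i$. Thus every $a$-point of $g$ is a critical point of $g$.

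The same argument applies to $b$: every $b$-point of $g$ satisfies $\nabla g=0$. Now consider the converse direction of hypothesis (ii). If $\partial_{z_i}f_j(w)=b$, then $\partial_{z_i}g_j=\rho_j b$ at the corresponding point, and this condition is unlikely to be useful in the limit. So the key information is only that the $a$-points and $b$-points of $g$ are all critical.

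Next we must rule out such a $g$. Restrict $g$ to a generic complex line $\ell(t)=p+tv$ with $v$ generic, so that $h(t)=g(p+tv)$ is a non-constant entire function of one variable of order at most $1$. Every zero of $h-a$ and of $h-b$ has multiplicity at least $2$, since $h'(t)=\langle\nabla g,\bar v\rangle$ vanishes there. By the second main theorem in one variable,
\[T(r,h)\le \overline N(r,a;h)+\overline N(r,b;h)+S(r,h)\le T(r,h)+S(r,h),\]
which is not contradictory on its own. To obtain a contradiction one adds the value $\infty$ (since $h$ is entire) together with the bound $g^{\#}\le 1$, and this step is the main obstacle.

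The route I would try first uses a classical fact. An entire function all of whose $a$-points and $b$-points are multiple can still exist; for example $\cos$ has this property with $a=1$, $b=-1$. So multiplicity alone does not suffice, and more information must be extracted.

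The refinement is to keep the sharing $f=b\Leftrightarrow\partial_{z_i}f=b$ in both directions by working with $\alpha$ close to $1$ in Lemma \ref{ln2.1a}, or by first passing to the auxiliary family
\[\mathcal{G}=\left\{\frac{f-a}{\,\cdots}\right\}.\]
Concretely, I would set $F=\partial_{z_i}f-f$ and use that $f=b$ implies $F=0$.

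Alternatively, one can rescale with $\alpha=0$ at points where $|f|$ is large and show that $g$ would omit $a$ and $b$ in the limit, which contradicts Picard's theorem along a line. To get omission rather than mere criticality, use the distinctness of $a$ and $b$ near a limit point: near a $b$-point $\zeta_0$ of $g$, the Hurwitz points give $\partial_{z_i}f_j=b$ at $w_j$. Since $g_j$ is close to $b$ on a fixed neighbourhood while $\partial_{z_i}g_j=\rho_j\,\partial_{z_i}f_j$, a comparison through the identity theorem forces $g\equiv b$ on that neighbourhood, which is impossible.

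Once the contradiction is obtained, normality at every point follows, and hence normality on $\Omega$ by the local characterization stated before Lemma \ref{ln2.1}.
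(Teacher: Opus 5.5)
Your argument never reaches a contradiction, and the gap is where you set aside the converse half of hypothesis (ii). With $\alpha=0$ you only extract three facts: $g$ is non-constant, $g$ has order at most $1$, and $\nabla g=0$ at every $a$-point and every $b$-point. As your own $\cos$ remark suggests, these facts are not contradictory. The function $g(\zeta)=\frac{a+b}{2}+\frac{a-b}{2}\cos\zeta_1$ has all of these properties and also has bounded spherical derivative.

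None of your proposed repairs closes the gap:
\begin{enumerate}
\item[(a)] The family $\{(f-a)/\cdots\}$, the function $\partial_{z_i}f-f$ and the choice of $\alpha$ near $1$ are never developed.
\item[(b)] Lemma \ref{ln2.1a} does not let you choose to rescale ``at points where $|f|$ is large''.
\item[(c)] The last step is false. At the Hurwitz points $\zeta_j$ near a $b$-point $\zeta_0$ you only know $\partial_{z_i}g_j(\zeta_j)=\rho_j b\to 0$, which again gives just $\nabla g(\zeta_0)=0$. The functions $g_j$ are not close to $b$ on a fixed neighbourhood, and no identity-theorem comparison yields $g\equiv b$.
\end{enumerate}

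The missing idea is that the implication $\partial_{z_k}f=b\Rightarrow f=b$ does survive the limit, even with $\alpha=0$.
\begin{enumerate}
\item[(1)] Fix $k$ with $\partial_{\zeta_k}g\not\equiv 0$. Since $\partial_{\zeta_k}g_j-\rho_j b\to\partial_{\zeta_k}g$ locally uniformly, Hurwitz's theorem at a zero $\xi_0$ of $\partial_{\zeta_k}g$ gives $\xi_j\to\xi_0$ with $\partial_{z_k}f_j(z_j+\rho_j\xi_j)=b$. Hence $f_j(z_j+\rho_j\xi_j)=b$, and so $g(\xi_0)=b$.
\item[(2)] Combined with your first step, every $a$-point of $g$ would then be a $b$-point, so $g$ omits $a$.
\item[(3)] Since $\rho(g)\le 1$, Lemma \ref{ln2.6} gives $g=a+e^{L}$ with $L$ non-constant affine. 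So $\partial_{\zeta_k}g$ is zero-free.
\item[(4)] Because $b$-points of $g$ are critical, $g$ omits $b$ as well. This is impossible for $a+e^{L}$.
\end{enumerate}

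For comparison, the paper quotes this lemma without proof. It does prove the variant Lemma \ref{ln2.3a}, whose hypotheses are implied by the present ones. That proof uses $\alpha=\frac12$ and $F_j=\rho_j^{-1/2}(f_j(z_j+\rho_j\zeta)-a)$. There, a point with $\partial_{z_k}f_j=b$ would force $F_j=(b-a)\rho_j^{-1/2}\to\infty$, so $\partial_{\zeta_k}F$, and hence $F$, are zero-free. The exponential case is then excluded through the normalization $\rho_j=1/g_j^{\#}(z_j)$.
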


\begin{lem}\label{ln2.6}\cite[Lemma 3.59]{Hu-Li-Yang-2003} Let $P$ be a non-constant entire function in $\mathbb{C}^n$. Then 
\begin{align*}
\rho(e^P)=
\begin{cases}
\deg(P), & \text{if $P$ is a polynomial,}\\
+\infty, & \text{otherwise.}
\end{cases}
\end{align*}
\end{lem}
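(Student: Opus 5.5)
The plan is to pass from the characteristic function $T(r,e^P)$ to the maximum modulus $M(r,f)=\max_{\|z\|=r}|f(z)|$, and then to study $\log M(r,e^P)=\max_{\|z\|=r}\operatorname{Re}P(z)$ directly. The first step is the standard comparison for a holomorphic function $f$ on $\mathbb{C}^n$. Since $f$ has no poles, $T(r,f)=m(r,f)\le \log^+M(r,f)$. In the other direction, for $0<r<R$,
\begin{align*}
\log^+ M(r,f)\le C_n\Big(\frac{R}{R-r}\Big)^{2n-1}\, T(R,f).
\end{align*}
This should follow from the Poisson--Jensen representation of the plurisubharmonic (indeed subharmonic in $\mathbb{R}^{2n}$) function $\log^+|f|$ on the ball $\mathbb{C}^n(R)$, together with the fact that $\sigma$ is the normalized measure on the sphere. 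Taking $R=2r$ gives
\begin{align*}
\rho(f)=\limsup_{r\to\infty}\frac{\log\log M(r,f)}{\log r}.
\end{align*}
I expect this comparison to be the main technical point. I would simply cite it from Stoll's book or from Hu--Li--Yang rather than reprove the Poisson kernel estimate.

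\emph{Polynomial case.} Let $d=\deg P\ge 1$, and write $P=P_d+Q$ with $P_d$ homogeneous of degree $d$, $P_d\not\equiv 0$, and $\deg Q<d$.

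For the upper bound, $\operatorname{Re}P(z)\le |P(z)|\le C\|z\|^d$ for large $\|z\|$, so $\log M(r,e^P)\le Cr^d$ and hence $\rho(e^P)\le d$.

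For the lower bound, choose $w$ with $\|w\|=1$ and $P_d(w)=|P_d(w)|e^{i\phi}\ne 0$. Put $z=re^{i\theta}w$ with $d\theta=-\phi$. Then $P_d(z)=r^d|P_d(w)|$, while $|Q(z)|=O(r^{d-1})$. Therefore $\log M(r,e^P)\ge |P_d(w)|r^d-O(r^{d-1})$, which gives $\rho(e^P)\ge d$.

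\emph{Transcendental case.} Suppose $P$ is entire but not a polynomial, and set $A(r)=\max_{\|z\|=r}\operatorname{Re}P(z)=\log M(r,e^P)$. This is nondecreasing by the maximum principle. For each unit vector $w$, apply the one-variable Borel--Carath\'eodory inequality to the entire function $\zeta\mapsto P(\zeta w)$ on $|\zeta|\le 2r$:
\begin{align*}
\max_{|\zeta|=r}|P(\zeta w)|\le 2A(2r)+3|P(0)|.
\end{align*}
Taking the supremum over $w$ gives $M(r,P)\le 2A(2r)+3|P(0)|$.

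Now suppose $\rho(e^P)<\infty$. Then $A(r)\le r^k$ for some $k$ and all large $r$, so $M(r,P)=O(r^k)$. The Cauchy estimates applied to the homogeneous Taylor expansion of $P$ at the origin then force every homogeneous component of degree greater than $k$ to vanish. So $P$ is a polynomial, a contradiction. Hence $\rho(e^P)=+\infty$.
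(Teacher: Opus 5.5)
The paper does not prove this lemma. It is quoted from Hu--Li--Yang (Lemma 3.59 there) and used only as a black box, so there is no in-paper argument to compare yours with. Judged on its own, your proof is correct and complete.

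The comparison you intend to cite even comes with an explicit constant. On $\|z\|=r$, the Poisson kernel of the ball $\|z\|<R$ in $\mathbb{R}^{2n}$, taken relative to the normalized measure $\sigma$, is at most
$$R^{2n-2}(R+r)/(R-r)^{2n-1}\le 2\big(R/(R-r)\big)^{2n-1}.$$
For entire $f$ one has $N(r,f)=0$ and $T=m$, so you get $T(r,f)\le \log^+M(r,f)\le 2^{2n}\,T(2r,f)$. This identifies the two notions of order.

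The polynomial case is handled correctly, in both the upper and the lower bound.

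The transcendental case is also sound:
\begin{itemize}
\item The Borel--Carath\'eodory bound needs no sign condition on $A(2r)$: apply the $g(0)=0$ version to $P(\zeta w)-P(0)$.
\item Replacing $\max_{|\zeta|=2r}\operatorname{Re}P(\zeta w)$ by $A(2r)$ is legitimate because $\|\zeta w\|=2r$.
\item The one-variable Cauchy estimate $|P_m(w)|\le r^{-m}M(r,P)$ for $\zeta\mapsto P(\zeta w)=\sum_m P_m(w)\zeta^m$ kills every homogeneous component of degree $m>k$.
\end{itemize}

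The merit of your route is that the only genuinely $n$-dimensional ingredient is the $T$ versus $M$ comparison. Everything else reduces to one-variable estimates on complex lines through the origin.
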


\begin{lem}\label{ln2.3a} Let $\mathcal{F}$ be a family of holomorphic functions on a domain $\Omega\subset \mathbb{C}^n$ and let $a$ and $b$ be two distinct finite complex numbers. For each $f\in \mathcal{F}$, if $f=a\Rightarrow \partial_{z_i}(f)=a$ and $\partial_{z_i}(f)=b\Rightarrow f=b$ for $i=1,2,\ldots,n$, then $\mathcal{F}$ is normal in $\Omega$.
\end{lem}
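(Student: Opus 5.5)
The plan is to argue by contradiction using Lemma \ref{ln2.1a}, in the same spirit as the proof of Lemma \ref{ln2.3}. Suppose $\mathcal{F}$ is not normal at some $z_0\in\Omega$. Then Lemma \ref{ln2.1a} with $\alpha=0$ gives $f_j\in\mathcal{F}$, $z_j\to z_0$ and $\rho_j\to 0$ such that
\[g_j(z)=f_j(z_j+\rho_j z)\]
converges locally uniformly on $\mathbb{C}^n$ to a non-constant entire function $g$ with $g^{\#}\le g^{\#}(0)=1$. By Lemma \ref{ln2.2}, $\rho(g)\le 1$. The two facts I will use repeatedly are
\[\partial_{z_i}g_j(z)=\rho_j(\partial_{z_i}f_j)(z_j+\rho_jz)\to\partial_{z_i}g(z),\]
and a Hurwitz-type argument in several variables. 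For the latter, if $g(w_0)=c$, restrict to a complex line $\ell$ through $w_0$ on which $g$ is non-constant. The one-variable Hurwitz theorem then gives $w_j\in\ell$, $w_j\to w_0$, with $g_j(w_j)=c$.

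\textbf{The value $a$.} Suppose $g(w_0)=a$ and choose $w_j\to w_0$ with $f_j(z_j+\rho_jw_j)=a$. Hypothesis (i) gives $\partial_{z_i}f_j(z_j+\rho_jw_j)=a$, so $\partial_{z_i}g_j(w_j)=\rho_j a\to 0$, and therefore $\partial_{z_i}g(w_0)=0$ for every $i$. Thus every $a$-point of $g$ is a critical point. Consequently, on every complex line on which $g$ is non-constant, all $a$-points of $g$ are multiple.

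\textbf{The value $b$.} I would try to show that $g$ omits $b$, or at least that the $b$-points of $g$ are also multiple. For this I plan a second rescaling. Near a point $w_0$ with $g(w_0)=b$, the functions $f_j$ are close to $b$ on the shrinking balls $z_j+\rho_j B(w_0,r)$. By hypothesis (ii), away from $f_j^{-1}(b)$ the derivatives $\partial_{z_i}f_j$ omit $b$. So I would consider the auxiliary family
\[h_j(z)=\frac{\partial_{z_i}f_j(z)-b}{f_j(z)-b},\]
which is holomorphic on these balls, since the zeros of the denominator absorb those of the numerator. The aim is to show that $h_j$ is locally bounded, and then to deduce that $\partial_{z_i}g_j=\rho_j\partial_{z_i}f_j$ vanishes at the limiting $b$-points. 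Once both $a$ and $b$ are shown to be totally ramified values of $g$ on a generic complex line through a point where $g$ is non-constant, the restriction there is a one-variable non-constant entire function of order at most $1$ with two totally ramified finite values. The second main theorem (for entire functions $\sum(1-1/m_k)\le 1$ over finite values, so two values of ramification index $\tfrac12$ are only possible in degenerate exponential-type cases) should then give the contradiction. The remaining case, $g=A\cos(\lambda z)+B$-type, would be ruled out by checking which critical values it actually has against the constraint from the $b$-points.

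\textbf{Main obstacle.} I expect the treatment of the value $b$ to be the hard part, because hypothesis (ii) points the ``wrong'' way relative to the rescaling. If the step above via $h_j$ fails, my fallback is the classical two-case scheme from the one-variable proof in \cite{Lu-Xu-Yi-2009}. Case (A): $f_j(z_0)\ne a$ for infinitely many $j$. Case (B): $f_j(z_0)=a$ for infinitely many $j$. In each case I would combine the above with a Zalcman rescaling of the derivative family with exponent $\alpha$ close to $1$, reducing each case to showing that a limit function omits a value or has only multiple $a$-points. I would then conclude with Lemma \ref{ln2.1} and Lemma \ref{ln2.2} as above.
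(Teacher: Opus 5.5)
Your argument for the value $a$ is correct; the gap is in your treatment of the value $b$, and it cannot be closed. First, $h_j=(\partial_{z_i}f_j-b)/(f_j-b)$ is not holomorphic. Hypothesis (ii) says that zeros of the numerator are zeros of the denominator, which is the opposite of what you need, so every $b$-point of $f_j$ at which $\partial_{z_i}f_j\neq b$ is a pole of $h_j$. Second, the hypotheses do not make $b$ a ramified value of the limit. Choose $i$ with $\partial_{z_i}g\not\equiv 0$. Hurwitz applied to $\partial_{z_i}g_j-\rho_j b\to\partial_{z_i}g$, together with (ii), shows that every zero of $\partial_{z_i}g$ is a $b$-point of $g$. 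Combined with your step for $a$, this forces $g$ to omit $a$, so by Lemma~\ref{ln2.6} $g=a+Ce^{\lambda_1\zeta_1+\cdots+\lambda_n\zeta_n}$. This $g$ is consistent with everything obtainable at this scale, and its $b$-points are simple. No ramification count excludes it, since an omitted value already counts as totally ramified. So your second-main-theorem endgame aims at a conclusion you cannot reach.

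No argument of this shape can succeed, because it never uses $b\neq 0$, and for $b=0$ the statement is false. Consider
\[
f_m(z)=a+e^{m(z_1+\cdots+z_n)}\qquad(a\neq 0,\ b=0,\ m\in\mathbb{N}).
\]
These satisfy (i) and (ii) vacuously: $f_m$ omits $a$, and $\partial_{z_i}f_m=me^{m(z_1+\cdots+z_n)}$ omits $0$. Yet $f_m(0)=a+1$, while $f_m\to\infty$ wherever $\mathrm{Re}(z_1+\cdots+z_n)>0$, so the family is not normal at $0$. Its rescaling limits are exactly the exponentials above.

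The paper's proof hits the same obstruction. It rescales $f-a$ with $\alpha=\frac12$, and its Case 2 is a sharper version of your argument: there $\partial_{\zeta_k}F$ is zero-free, hence so is $F$. The surviving Case 1, $F=Ae^{\lambda_1\zeta_1+\cdots+\lambda_n\zeta_n}$, is dismissed using $\rho_j=1/g_j^{\#}(z_j)$. That identity is the normalization of the $\alpha=0$ rescaling and does not hold for $\alpha=\frac12$; the example shows Case 1 really occurs. For every $0\le\alpha<1$ the Hurwitz arguments yield only a zero-free exponential limit, so your fallback with $\alpha$ close to $1$ meets the same case.

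What is missing is the critical exponent together with $b\neq 0$. In one variable, (i) gives $|(f-a)'|=|a|$ on the zeros of $f-a$, so the Pang--Zalcman lemma applies with $\alpha=1$ and $A=\max\{1,|a|\}$. The limit $G$ of $\rho_j^{-1}\{f_j(z_j+\rho_j\zeta)-a\}$ then satisfies $G=0\Rightarrow G'=a$, and either $G'\equiv b$ or $G'\neq b$. For $b\neq 0$ the cases close as follows:
\begin{enumerate}
\item[(1)] $G'\equiv b$ is impossible: then $G$ has a zero, and there $G'=b\neq a$.
\item[(2)] The subcase $G=b\zeta+\frac{C}{\lambda}e^{\lambda\zeta}+d$ with $\lambda\neq 0$ is impossible: $G$ has infinitely many zeros, all of which $G'=a$ would force to a single point.
\item[(3)] The linear subcase $G=a\zeta+d$ is excluded by the normalization $G^{\#}(0)=A+1$.
\end{enumerate}
In $\mathbb{C}^n$ this needs an $\alpha=1$ analogue of Lemma~\ref{ln2.1a}, under a bound on $\|\nabla f\|$ on the zero set of $f-a$, which the paper does not provide. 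The statement itself must at least assume $b\neq 0$.
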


\begin{proof}
Since normality is a local property, it is enough to show that $\mathcal{F}$ is normal at each point $z_0\in \Omega$.
Suppose on the contrary that $\mathcal{F}$ is not normal at $z_0\in\Omega$. Set $\mathcal{F}_a=\{f-a: f\in\mathcal{F}\}$. Then $\mathcal{F}_a$ is not normal in $\Delta$. Consequently, we may assume that $\mathcal{F}_a$ is not normal at $z_0\in\Delta$. 
Then by Lemma \ref{ln2.1a}, there exist a sequence of functions $f_j\in\mathcal{F}$, a sequence $\{z_j\}\subset\Omega$ with $z_j\rightarrow z_0$ and a sequence of positive numbers $\{\rho_j\}$ with $\rho_j\rightarrow 0$ such that
\begin{align}\label{rj1.1} 
F_j(\zeta)=\rho_j^{-\frac{1}{2}}\left\lbrace f_j(z_j+\rho_j \zeta)-a\right\rbrace\rightarrow F(\zeta)
\end{align}
locally uniformly in $\mathbb{C}^n$ to a non-constant holomorphic function $F$ such that $F^{\#}(\zeta)\leq F^{\#}(0)=1$, $\forall$ $\zeta\in\mathbb{C}^n$. Since $F^{\#}(\zeta)\leq 1$ for all $\zeta\in\mathbb{C}^n$, by Lemma \ref{ln2.2}, we get $\rho(F)\leq 1$.  On the other hand from the proof of Theorem 1.1 \cite{PVD2}, we get
\begin{align}
\label{rj1.2} \rho_{j}=\frac{1}{g_{j}^{\#}(z_{j})},
\end{align}
where $g_j(z_j)=f_{j}(z_{j})-a$. Also (\ref{rj1.1}) gives
\begin{align}\label{rj1.3} 
\partial_{\zeta_i}(F_j(\zeta))=\rho_j^{\frac{1}{2}}\partial_{\zeta_i}(f_j(z_j+\rho_j \zeta))\rightarrow \partial_{\zeta_i}(F(\zeta)),
\end{align}
locally uniformly in $\mathbb{C}^n$, where $i\in\{1,2,\ldots,n\}$. 
We consider the following two cases.\par

\smallskip
{\bf Case 1.} Let $0$ be a Picard exceptional value of $F$. Since $\rho(F)\leq 1$, using Lemma \ref{ln2.6}, we may assume that 
\begin{align}
\label{rj1.4} F(\zeta)=Ae^{\lambda_1 \zeta_1+\lambda_2\zeta_2+\ldots+\lambda_n\zeta_n},
\end{align}
where $A(\neq 0)$ and $\lambda_1,\lambda_2,\ldots,\lambda_n$ are constants such that $(\lambda_1,\lambda_2,\ldots,\lambda_n)\neq (0,0,\ldots,0)$. Clearly from (\ref{rj1.1}), (\ref{rj1.3}) and (\ref{rj1.4}), we have 
\begin{align*}
\frac{\partial_{\zeta_i}(F_j(\zeta))}{F_j(\zeta)}=\rho_{j}\frac{\partial_{\zeta_i}(f_j(z_j+\rho_j \zeta))}{f_j(z_j+\rho_j \zeta)-a}\rightarrow \frac{\partial_{\zeta_i}(F(\zeta))}{F(\zeta)}=\lambda_i,
\end{align*}
locally uniformly in $\mathbb{C}^n$, where $i\in\{1,2,\ldots,n\}$. Consequently, we have
\begin{align}
\label{rj1.5} \rho_{j}\frac{\sqrt{\sum\limits_{i=1}^n|\partial_{\zeta_i}(f_j(z_j))|^2}}{|f_j(z_j)-a|}\rightarrow \sqrt{\sum\limits_{i=1}^n|\lambda_i|^2}.
\end{align}

Now using (\ref{rj1.2}) to (\ref{rj1.5}), we get
\begin{align*}
\frac{1+|f_j(z_j)-a|^2}{|f_j(z_j)-a|}\rightarrow \sqrt{\sum\limits_{i=1}^n|\lambda_i|^2}\neq 0,
\end{align*}
which shows that
\begin{align*}
\lim\limits_{j\to \infty}\left\lbrace f_j(z_j)-a\right\rbrace\neq 0, \infty
\end{align*}
and so from (\ref{rj1.1}), we have
\begin{align*} 
F_j(0)=\rho_n^{-\frac{1}{2}}\left\lbrace f_j(z_j)-a\right\rbrace\to \infty.
\end{align*}

Again, from (\ref{rj1.1}) and (\ref{rj1.4}), we have $F_j(0)\to F(0)=A$. So, we get a contradiction.

\smallskip
{\bf Case 2.} Let $0$ be not a Picard exceptional value of $F$. Note that
\begin{align}\label{rj1.6} 
F^{\#}(\zeta)=\frac{\sqrt{\sum\limits_{j=1}^n|\partial_{\zeta_j}(F(\zeta))|^2}}{1+|F(\zeta)|^2},\;\;\zeta\in\mathbb{C}^n
\end{align}
and $F^{\#}(\zeta)\leq F^{\#}(0)=1$ for all $\zeta\in\mathbb{C}^n$. Therefore from (\ref{rj1.6}), it is easy to conclude that $\partial_{\zeta_k}(F(\zeta))\not\equiv 0$ for atleast one $k\in\{1,2,\ldots,n\}$.

Let $F(\zeta_0)=0$. Then applying Hurwitz's theorem (see \cite{PVD3}) to (\ref{rj1.1}), we get a sequence $\{\zeta_j\}$ in $\mathbb{C}^n$ such that $\zeta_j\rightarrow \zeta_0$ and 
\begin{align*}
F_j(\zeta)=\rho_n^{-\frac{1}{2}}\left\lbrace f_j(z_j+\rho_j \zeta)-a\right\rbrace=0
\end{align*}
for sufficiently large values of $j$. This implies that $f_j(z_j+\rho_j \zeta)=a$. Since $f_j(z)=a\Rightarrow \partial_{z_k}(f_j(z))=a$, we have $\partial_{z_k}(f_j(z_j+\rho_j \zeta_j))=a$. Then from (\ref{rj1.3}), we have
\begin{align*}
\partial_{z_k}(F(\zeta_0))=\lim\limits_{j\to\infty} \rho_j^{\frac{1}{2}} a=0.
\end{align*}

Hence $F=0\Rightarrow \partial_{z_k}(F)=0$. 
Now from (\ref{rj1.3}), we see that 
\begin{align}\label{rj1.7} 
\rho_{j}\left(\partial_{z_k}(f_j(z_{j}+\rho_{j}\zeta))-b\right)\rightarrow \partial_{z_k}(F(\zeta)).
\end{align}

Let $\partial_{z_k}(F(\xi_{0}))=0$. Then by (\ref{rj1.7}) and Hurwitz's theorem, there exists a sequence $\{\xi_{j}\}\subset\mathbb{C}^n$, $\xi_{j}\rightarrow \xi_{0}$ such that (for sufficiently large $j$) $\partial_{z_k}(f_j(z_{j}+\rho_{j}\xi_{j}))=b$. 
By the given condition, we have $f_j(z_{j}+\rho_{j}\xi_{j})=b$. Therefore from (\ref{rj1.1}), we have
\begin{align}\label{rj1.7}
F_j(\xi_{j})=\rho_j^{-\frac{1}{2}}\left\lbrace f_j(z_j+\rho_j \xi_j)-a\right\rbrace=(b-a)\rho_j^{-\frac{1}{2}}.
\end{align}

Since $a\neq b$ and $\rho_j\to 0$, from (\ref{rj1.7}), we get
\begin{align*}
F(\xi_0)=\lim\limits_{j\to \infty} F_j(\xi_{j})=\infty
\end{align*}
which contradicts the fact that $\partial_{z_k}(F(\xi_{0}))=0$. Hence $\partial_{z_k}(F)$ has no zeros. Since $F=0\Rightarrow \partial_{z_k}(F)=0$, we conclude that $0$ is a Picard exceptional value of $F$, which is impossible.

Hence $\mathcal{F}$ is normal at $z_0$. Consequently $\mathcal{F}$ is normal on $\Omega$. 
\end{proof}

For the concavity of logarithmic function, we have the following lemma.
\begin{lem}\cite[Lemma 1.32]{Hu-Li-Yang-2003} \label{ln2.7} Take $r>0$. Let $h$ be a non-negative function on $\mathbb{C}^n(r)$ such that $\log^+ h$ is integrable over $\mathbb{C}^n(r)$. Then 
\[\mathbb{C}^n[r;\log^+h]\leq \log^+(\mathbb{C}^n[r;h])+\log 2.\]
\end{lem}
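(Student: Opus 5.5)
The plan is to reduce the statement to Jensen's inequality for the concave function $\log(1+x)$, using the elementary two-sided comparison between $\log^+$ and $\log(1+\cdot)$. I read $\mathbb{C}^n[r;\varphi]$ as the mean value of $\varphi$ with respect to the normalized (probability) measure on $\mathbb{C}^n(r)$, as in the notation $\mathbb{C}^n\langle r;\cdot\rangle$ used for $m(r,f)$. Only two features of this measure matter: it is positive and it has total mass $1$.

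First, I record the pointwise inequalities valid for every $x\geq 0$:
\begin{align*}
\log^+ x\leq \log(1+x)\leq \log^+ x+\log 2 .
\end{align*}
The left one holds because $1+x\geq \max\{1,x\}$. The right one follows from $1+x\leq 2\max\{1,x\}$. Applying the left inequality with $x=h(z)$ and integrating against the probability measure gives
\begin{align*}
\mathbb{C}^n[r;\log^+h]\leq \mathbb{C}^n[r;\log(1+h)].
\end{align*}

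Next, if $\mathbb{C}^n[r;h]=+\infty$, the right-hand side of the claim is $+\infty$ and there is nothing to prove, so I assume $h$ is integrable. Since $t\mapsto \log(1+t)$ is concave on $[0,\infty)$ and the measure has total mass one, Jensen's inequality yields
\begin{align*}
\mathbb{C}^n[r;\log(1+h)]\leq \log\bigl(1+\mathbb{C}^n[r;h]\bigr).
\end{align*}
The integrability hypothesis on $\log^+h$, together with $0\leq \log(1+h)\leq \log^+h+\log 2$, guarantees that the left-hand side is a well-defined finite integral. Finally, the right pointwise inequality with $x=\mathbb{C}^n[r;h]$ gives $\log(1+\mathbb{C}^n[r;h])\leq \log^+(\mathbb{C}^n[r;h])+\log 2$. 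Chaining the three inequalities proves the lemma.

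I do not expect a real obstacle. The only points needing care are the normalization of the measure, since Jensen's inequality requires total mass $1$, and the degenerate case where $h$ is not integrable.
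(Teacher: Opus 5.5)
Your proof is correct. The paper gives no proof of its own and only cites Hu, Li and Yang, and your argument is the standard proof of this concavity lemma: the pointwise bounds $\log^+x\le\log(1+x)\le\log^+x+\log 2$ combined with Jensen's inequality for the concave function $\log(1+t)$. You are right that the normalization to total mass one is essential, and it holds for the measure $\sigma$ with which the paper actually applies the lemma in the proof of Lemma \ref{ln2.9}.
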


In 1995, Ye \cite{27} obtained the following result.
\begin{lem}\cite[Lemma 4]{27} \label{ln2.8} Let $f$ be a non-constant meromorphic function in $\mathbb{C}^n$. Then for any $0<\alpha<\frac{1}{2}$, there is a constant $C>1$ such that for any $r_0<r<R$ and any $j\in\{1,2,\ldots,n\}$ we have 
\[\mathbb{C}^n\bigg\langle r;\bigg|\frac{\partial_{z_j}(f)}{f}\bigg|^{\alpha}\bigg\rangle\leq C\left\{\left(\frac{R}{r}\right)^{2n-1}\frac{T(R,f)}{R-r}\right\}^{\alpha}.\]
\end{lem}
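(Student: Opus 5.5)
The plan is to follow the classical Gol'dberg--Grinshtein route to the logarithmic derivative lemma, transplanted to the ball $\mathbb{C}^n(R)$ via the Poisson--Jensen (Riesz) representation of the plurisubharmonic-type function $\log|f|$. Fix $r_0<r<R$. On $\mathbb{C}^n(R)$, $\log|f|$ is the difference of the Poisson integral of its boundary values and the Green potentials of the zero and pole divisors. Concretely, for $z\in\mathbb{C}^n(R)$ I will use
\begin{align*}
\log|f(z)|=\int_{\mathbb{C}^n\langle R\rangle}P_R(z,\xi)\log|f(\xi)|\,\sigma_R(\xi)-\int G_R(z,w)\,\mu_f^0(w)\,\upsilon^{n-1}(w)+\int G_R(z,w)\,\mu_f^\infty(w)\,\upsilon^{n-1}(w).
\end{align*}
Here $P_R$ is the Poisson kernel of the ball and $G_R$ its Green function, which behaves like $|z-w|^{-2(n-1)}$ near the diagonal (for $n\ge 2$). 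Since $\partial_{z_j}(f)/f=2\,\partial_{z_j}\log|f|$ away from the divisors, differentiating under the integral sign gives $\partial_{z_j}(f)/f$ as a sum of three terms:
\begin{enumerate}
\item[(a)] an integral of $\partial_{z_j}P_R(z,\xi)\log|f(\xi)|$, with $|\partial_{z_j}P_R(z,\xi)|\le C R/(R-|z|)^{2n}$;
\item[(b)] the zero potential, with kernel bounded by $C|z-w|^{-(2n-1)}$ plus a harmless reflected part;
\item[(c)] the pole potential, with the same kernel bound.
\end{enumerate}

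Next I raise to the power $\alpha<1$ using $(x+y+u)^\alpha\le x^\alpha+y^\alpha+u^\alpha$, and average over $\mathbb{C}^n\langle r\rangle$. For term (a), $\int_{\mathbb{C}^n\langle R\rangle}|\log|f||\,\sigma\le m(R,f)+m(R,1/f)\le 2T(R,f)+O(1)$ by the first main theorem. This gives a contribution of order $\big(R^{2n}T(R,f)/(R-r)^{2n}\big)^\alpha$, which I will reorganize into the stated form $\{(R/r)^{2n-1}T(R,f)/(R-r)\}^\alpha$; if needed, I will first shrink to an intermediate radius $\rho=(r+R)/2$ to trade powers of $R-r$.

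For terms (b) and (c), I apply Jensen/Hölder, in the spirit of Lemma \ref{ln2.7}, to pull the power $\alpha$ inside the divisor integral. That is, I bound $\big(\int K\,d\nu\big)^\alpha$ by $\nu(\mathbb{C}^n[R])^{\alpha}$ times a sup over $w$ of the sphere average of $K(\cdot,w)^\alpha$, using concavity with respect to the normalized measure $d\nu/\nu(\mathbb{C}^n[R])$. The key uniform estimate is
\[
\sup_{w}\int_{\mathbb{C}^n\langle r\rangle}|z-w|^{-(2n-1)\alpha}\,\sigma(z)\le C(\alpha)\,r^{-(2n-1)\alpha},
\]
which needs $(2n-1)\alpha<2n-1$, together with control of the local Lipschitz behaviour of $\sigma$ near $w$ off the sphere. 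I expect the hypothesis $\alpha<\frac12$ to enter precisely here, to keep the constant uniform in $w$ (including points $w$ lying near the sphere) after the Hölder step. Finally, the total divisor mass is controlled by the counting functions:
\[
R^{-2(n-1)}\int_{\mathbb{C}^n[R]}(\mu_f^0+\mu_f^\infty)\,\upsilon^{n-1}=n(R,0;f)+n(R,f)\le \frac{C\,R}{R'-R}\big(T(R',f)+T(R',1/f)\big),
\]
where $R'>R$ is auxiliary. This follows from the monotonicity of $n$ and $N(R',\cdot)\ge n(R)\log(R'/R)$. Using the same intermediate-radius trick, this produces again a bound of the form $\{(R/r)^{2n-1}T(R,f)/(R-r)\}^\alpha$.

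The main obstacle is the step for terms (b) and (c): obtaining a uniform-in-$w$ bound for the sphere average of the $\alpha$-th power of the differentiated Green kernel with the correct power of $r$, namely $(R/r)^{2n-1}$. This requires explicit formulas for $G_R$ and $P_R$ on the ball in $\mathbb{C}^n\cong\mathbb{R}^{2n}$, and a careful splitting into the cases $|w|$ close to $r$ and $|w|$ far from $r$. I would first carry out this computation in real coordinates using the Euclidean Newtonian kernel $|x-y|^{2-2n}$ and Kelvin reflection, since $\log|f|$ is subharmonic on $\mathbb{R}^{2n}$ and the Riesz decomposition applies there directly.
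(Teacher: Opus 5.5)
The paper gives no proof of this lemma; it is quoted from Ye's paper. So your argument has to stand on its own, and as written it has two genuine gaps.

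\textbf{The divisor terms (b), (c).} This is the main gap. Pulling the power $\alpha$ inside the divisor integral ``by concavity'' goes the wrong way. For a probability measure $\hat\nu$ and $0<\alpha<1$, Jensen's inequality gives $\bigl(\int K\,d\hat\nu\bigr)^{\alpha}\ge\int K^{\alpha}\,d\hat\nu$. The valid subadditivity $(\sum x_i)^{\alpha}\le\sum x_i^{\alpha}$ yields the total mass only to the first power, not the power $\alpha$ that a bound of the form $\{\cdots T(R,f)\}^{\alpha}$ needs.

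Nor is the target inequality true at the generality of your argument. You aim to show that the $\sigma$-average over $\mathbb{C}^n\langle r\rangle$ of $\bigl(\int\|z-w\|^{1-2n}\,d\nu(w)\bigr)^{\alpha}$ is at most $C\,\nu(\mathbb{C}^n[R])^{\alpha}r^{-(2n-1)\alpha}$. This fails when $\nu$ consists of $N$ unit masses evenly spread over $\mathbb{C}^n\langle r\rangle$ (for $n=1$ this is the zero divisor of $z^N-r^N$). There the inner integral is $\asymp r^{1-2n}N\log N$ on most of the sphere. So once the kernel is replaced by its modulus, an averaging argument blind to the structure of $\nu$ loses a factor $(\log N)^{\alpha}$.

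The Jensen step in the correct direction (in $z$) leads via Fubini to $\int_{\mathbb{C}^n\langle r\rangle}\|z-w\|^{1-2n}\sigma(z)\asymp r^{1-2n}\log\frac{r}{|\|w\|-r|}$ for $\|w\|$ near $r$. That means you must control how the divisor accumulates near the sphere, and the proposal does not address this. Some real extra input is required. One option is cancellation in $\partial_{z_j}(f)/f$: in one variable the sharp $\alpha$-moment bound comes from positivity arguments such as $|h|^{\alpha}\le\sec(\pi\alpha/2)\,\mathrm{Re}(h^{\alpha})$ for $\mathrm{Re}\,h>0$. One could try to import this by slicing $\mathbb{C}^n\langle r\rangle$ into circles in the $z_j$-direction. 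The other option is the specific geometry of analytic divisors.

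Relatedly, $\alpha<\frac12$ plays no role in your uniform estimate: $\sup_w\int_{\mathbb{C}^n\langle r\rangle}\|z-w\|^{-(2n-1)\alpha}\sigma(z)\le C(\alpha)\,r^{-(2n-1)\alpha}$ holds for every $\alpha<1$. That is a sign the decisive step is missing.

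\textbf{The boundary term (a).} A pointwise bound on $\partial_{z_j}P_R(z,\xi)$ for $\|z\|=r$ produces a factor $(R-r)^{-2n}$ (times powers of $R$). When $R$ is close to $r$, this exceeds $(R/r)^{2n-1}(R-r)^{-1}$ by a factor of order $(r/(R-r))^{2n-1}$, and the lemma is claimed for all $R>r$. Passing to $\rho=(r+R)/2$ changes only constants, not the power of $R-r$.

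What works is to apply Jensen in $z$ first and then Fubini, using $\int_{\mathbb{C}^n\langle r\rangle}|\partial_{z_j}P_R(z,\xi)|\,\sigma(z)\le C/(R-r)$ for $\|\xi\|=R$. Together with $\int_{\mathbb{C}^n\langle R\rangle}|\log|f||\,\sigma\le 2T(R,f)+O(1)$, this gives a boundary contribution within the stated bound.
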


\begin{lem}\label{ln2.9} Let $f$ be a non-constant meromorphic function on $\mathbb{C}^n$ such that $\rho(f)\leq 1$. Then any $j\in\{1,2,\ldots,n\}$, we have
\begin{align*}
m\left(r,\frac{\partial_{z_j}(f)}{f}\right)=o(\log r)\;\;\text{as}\;\;r\to \infty.
\end{align*}
\end{lem}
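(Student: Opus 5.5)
The plan is to combine Lemma \ref{ln2.8} (Ye's estimate) with the concavity inequality of Lemma \ref{ln2.7}, making a concrete choice of $R$ in terms of $r$ so that the growth hypothesis $\rho(f)\leq 1$ turns the right-hand side of Ye's bound into something of order $O(\log r)$ at most. The aim is to refine this to $o(\log r)$.

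Fix $j$ and some $\alpha\in(0,\tfrac12)$. First, since $\log^+ x=\frac{1}{\alpha}\log^+ x^{\alpha}$, we can write
\[m\Big(r,\frac{\partial_{z_j}(f)}{f}\Big)=\frac{1}{\alpha}\,\mathbb{C}^n\Big\langle r;\log^+\Big|\frac{\partial_{z_j}(f)}{f}\Big|^{\alpha}\Big\rangle.\]
Lemma \ref{ln2.7} is stated for the ball mean, but its proof is just Jensen's inequality for the probability measure $\sigma$. The same inequality therefore holds for the sphere mean $\mathbb{C}^n\langle r;\cdot\rangle$, since $\sigma$ is a normalized positive measure on $\mathbb{C}^n\langle r\rangle$. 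This gives
\[m\Big(r,\frac{\partial_{z_j}(f)}{f}\Big)\leq \frac{1}{\alpha}\log^+\mathbb{C}^n\Big\langle r;\Big|\frac{\partial_{z_j}(f)}{f}\Big|^{\alpha}\Big\rangle+\frac{\log 2}{\alpha}.\]
Next we apply Lemma \ref{ln2.8} with $R=2r$, which yields
\[m\Big(r,\frac{\partial_{z_j}(f)}{f}\Big)\leq \log^+T(2r,f)+O(1).\]
The factors $2^{2n-1}$ and $\frac{1}{r}$ only contribute $-\log r+O(1)$, and that is harmless.

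Now use $\rho(f)\leq1$: for every $\varepsilon>0$ we have $T(2r,f)\leq r^{1+\varepsilon}$ for large $r$. This only gives $m\leq (1+\varepsilon)\log r-\log r+O(1)=\varepsilon\log r+O(1)$, once we keep the $-\log r$ coming from $\frac{1}{R-r}$. More precisely, tracking the terms,
\[m\leq \log^+\Big(\frac{C'\,T(2r,f)}{r}\Big)+O(1)\leq \log^+\big(C' r^{\varepsilon}\big)+O(1)\leq \varepsilon\log r+O(1).\]
Since $\varepsilon>0$ is arbitrary, $\limsup_{r\to\infty} m/\log r\leq \varepsilon$ for every $\varepsilon$, and hence $m=o(\log r)$.

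The main obstacle is bookkeeping the $\log^+$ so that the $-\log r$ from $\frac{1}{R-r}$ is not lost. We must not split $\log^+(xy)\leq\log^+x+\log^+y$ carelessly; we should keep the product $T(2r,f)/r$ inside a single $\log^+$ and use $T(2r,f)\leq r^{1+\varepsilon}$ directly. A second point to check is the Jensen step on the sphere. If one insists on using Lemma \ref{ln2.7} literally as a ball statement, there is a fallback: integrate Lemma \ref{ln2.8} over radii, or simply note that the concavity argument is identical for $\sigma$. Finally, if $T(r,f)$ is bounded, the estimate holds trivially; for non-constant $f$, in any case $\log T\ll\log r$ is all we used.
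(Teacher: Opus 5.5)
Your proposal is correct and follows the paper's own proof. The paper also uses the concavity inequality of Lemma \ref{ln2.7} on the sphere, where $\sigma$ is a probability measure, to pass to the mean of $|\partial_{z_j}(f)/f|^{\alpha}$. It then applies Ye's estimate (Lemma \ref{ln2.8}) with $R=2r$, keeping $1/(R-r)$ inside the $\log^+$, so that $\rho(f)\le 1$ gives $m\le \varepsilon\log r+O(1)$ for every $\varepsilon>0$.

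There are two cosmetic differences. You handle $\rho<1$ and $\rho=1$ together via $T(2r,f)\le r^{1+\varepsilon}$, whereas the paper splits into these two cases. You also track the $1/\alpha$ factor more carefully than the paper does.
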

\begin{proof} By the definition of the proximity function, we have
\begin{align}\label{aa.1}
 m\left(r,\frac{\partial_{z_j}(f)}{f}\right)=\mathbb{C}^n\bigg\langle r;\log^+\bigg|\frac{\partial_{z_j}(f)}{f}\bigg|\bigg\rangle,
 \end{align}
where $j\in\{1,2,\ldots,n\}$. Now using Lemmas \ref{ln2.7} and \ref{ln2.8} to (\ref{aa.1}), we obtain 
\begin{align}\label{aa.2} 
m\left(r,\frac{\partial_{z_j}(f)}{f}\right)&\leq 
\frac{1}{\alpha}\log^+ \mathbb{C}^n\bigg\langle r;\log^+\bigg|\frac{\partial_{z_j}(f)}{f}\bigg|^{\alpha}\bigg\rangle+O(1)\nonumber\\
&\leq
 \log^+\left(\left(\frac{R}{r}\right)^{2n-1}\frac{T(R,f)}{R-r}\right)+O(1),
 \end{align}
where $R>r>r_0$ and $j\in\{1,2,\ldots,n\}$. Again by the given condition for every $\varepsilon>0$, there exists $r_1>0$ such that 
\begin{align}\label{aa.3} 
T(r,f)<r^{\rho+\varepsilon},\;\forall \;\; r>r_1.
\end{align}

Let us choose $R=2r$, where $r>\max\{r_0,r_1\}$. Then from (\ref{aa.2}) and (\ref{aa.3}), we get 
\begin{align}\label{aa.4} 
m\left(r,\frac{\partial_{z_j}(f)}{f}\right)\leq
\log^+(2^{2n-1}3^{\rho_1+\varepsilon}r^{\rho+\varepsilon-1})+O(1)\leq
 \log^+(r^{\rho+\varepsilon-1})+O(1),
 \end{align}
where $j\in\{1,2,\ldots,n\}$. 

If $\rho<1$, then choosing $\varepsilon>0$ in such a way that $\rho+\varepsilon-1<0$, we get from (\ref{aa.4}) that $m\left(r,\frac{\partial_{z_j}(f)}{f}\right)=O(1)$ and so
\begin{align*}
m\left(r,\frac{\partial_{z_j}(f)}{f}\right)=o(\log r)\;\;\text{as}\;\;r\to \infty,
\end{align*}
where $j\in\{1,2,\ldots,n\}$.
Next suppose that $\rho=1$. Then from (\ref{aa.4}), we get
\bea\label{aa.5} m\left(r,\frac{\partial_{z_j}(f)}{f}\right)&\leq & \log^+(2^{2n-1}3^{1+\varepsilon}r^{\varepsilon})+O(1)
 \leq \log^+(r^{\varepsilon})+O(1)=\varepsilon \log r+O(1).\eea
for sufficiently large $r$. Since $\varepsilon>0$ is arbitrary, we get from (\ref{aa.5}) that
\begin{align*}
 m\left(r,\frac{\partial_{z_j}(f)}{f}\right)=o(\log r)\;\;\text{as}\;\;r\to\infty
 \end{align*}
 for $j\in\{1,2,\ldots,n\}$.
\end{proof}

We now recall the following lemma.

\begin{lem}\cite[Lemma 2.2.9]{NW}\label{ln2.5a} Let the notation be as above.
\begin{enumerate}
\item[(i)] If $l=\dim A < n$, then Lebesgue measure of $A$ in $\mathbb{C}^n$ is zero.
\item [(ii)] $\int_{K\cap R(A)} \alpha^l < \infty$.
\end{enumerate}
\end{lem}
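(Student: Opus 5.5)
The statement is the standard local structure result for analytic sets. I would prove it from the local parametrization theorem, with induction on dimension for (i) and a projection/degree argument for (ii). Here $\alpha$ denotes the standard Kähler form $\upsilon=dd^c\tau$ and $K\subset U$ is compact.

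For (i), I would use induction on $l=\dim A$. Both the statement and Lebesgue measure are local, and $\mathbb{C}^n$ is second countable, so it suffices to show that every $a\in A$ has a neighbourhood $N(a)$ with $A\cap N(a)$ of measure zero. Split $A=R(A)\cup S(A)$.
\begin{enumerate}
\item[(a)] At a regular point, the implicit function theorem shows that $A$ is locally a complex submanifold of real dimension at most $2l<2n$. It is therefore locally the image of a $C^1$ map from an open subset of $\mathbb{R}^{2l}$ into $\mathbb{R}^{2n}$, and such images have measure zero. Countably many such charts cover $R(A)$.
\item[(b)] The singular locus $S(A)$ is again an analytic set, of dimension $<l$. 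This is the standard fact that the singular set is a thin analytic subset, which I would take from the same source. By the induction hypothesis it has measure zero.
\end{enumerate}
The base case $l=0$ is a discrete set, which has measure zero.

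For (ii), it again suffices to work locally, since $K$ is compact. I would cover $K\cap A$ by finitely many neighbourhoods $N(a)$ and show that $\int_{N(a)\cap R(A)}\alpha^l<\infty$ for suitably small $N(a)$. At points $a\notin A$ nothing is to be proved.
\begin{enumerate}
\item[(a)] Fix $a\in A$ with $\dim_a A\le l$. By the local parametrization (Weierstrass preparation / Noether normalization) theorem, for a generic linear projection $\pi:\mathbb{C}^n\to\mathbb{C}^l$ there is a neighbourhood $N(a)=\Delta'\times\Delta''$ such that $\pi:A\cap N(a)\to\Delta'$ is proper with finite fibres of cardinality at most some $d$. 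If $\dim_a A<l$, the form $\alpha^l$ vanishes on $R(A)$ near $a$ for dimension reasons, since it is an $(l,l)$-form restricted to a manifold of complex dimension $<l$.
\item[(b)] Write $\alpha^l$ as a finite linear combination with constant coefficients of forms $\pi_k^*\beta_k$. Here the $\pi_k$ range over finitely many linear maps $\mathbb{C}^n\to\mathbb{C}^l$ that are all generic in the above sense, and $\beta_k$ is the Euclidean volume form on $\mathbb{C}^l$. This is possible because the forms $\bigl(dd^c\|L(z)\|^2\bigr)^l$, for $L$ in any nonempty Zariski-open set of linear maps, span the constant positive $(l,l)$-forms.
\item[(c)] For each $k$ I would estimate $\int_{R(A)\cap N(a)}\pi_k^*\beta_k$. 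On $R(A)$ the map $\pi_k$ is at most $d_k$-to-one, so this integral is at most $d_k\operatorname{vol}(\pi_k(N(a)))<\infty$ by the change of variables formula. Summing over $k$ and over the finite cover gives (ii).
\end{enumerate}

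I expect the main obstacle to be step (ii)(b), which requires several projections to be simultaneously finite on $A\cap N(a)$ over a common neighbourhood. I would handle it as follows. The set of linear maps $L$ whose kernel meets the tangent cone of $A$ at $a$ nontrivially is a proper algebraic subset, so finitely many generic $L$ avoid it. Shrinking $N(a)$ to the intersection of the corresponding neighbourhoods then keeps every $\pi_k$ proper with finite fibres, and the positivity of $\alpha^l$ (Wirtinger) on $R(A)$ is consistent with these estimates.
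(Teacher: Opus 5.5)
The paper does not prove this lemma. It simply quotes it from \cite[Lemma 2.2.9]{NW}, and it leaves $K$ and $\alpha$ undefined; your reading, $K\subset U$ compact and $\alpha=dd^c\tau$, is the intended one. So there is no in-paper argument to compare with.

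Your proposal is a correct proof, provided one grants the standard structure theorems you invoke: analyticity of $S(A)$ with $\dim S(A)<\dim A$, local parametrization, and finiteness of a projection whose kernel meets the tangent cone $C_a(A)$ only at $0$. In (ii), possibly negative coefficients in $\alpha^l=\sum_k c_k\pi_k^*\beta_k$ are harmless. Each $\pi_k^*\beta_k$ is non-negative on the $l$-dimensional part of $R(A)$ and vanishes on lower-dimensional parts, so every integral converges absolutely and the sum can be split.

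Compared with the usual textbook argument, your route for (i) uses more machinery than necessary. Since $\dim A<n$, $A$ has empty interior, so near each $a\in A$ some local defining function $f_j$ is $\not\equiv 0$ on a connected neighbourhood. The hypersurface $\{f_j=0\}$ is then Lebesgue-null by Weierstrass preparation and Fubini: after a linear change of coordinates, each line parallel to the $z_n$-axis meets it in at most $\deg P$ points. This needs neither the theorem on $S(A)$ nor an induction on dimension.

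In (ii), instead of spanning $\alpha^l$ by several generic projections, you can use the unitary invariance of $\alpha$. Pick a single unitary frame in which all $\binom{n}{l}$ coordinate projections are finite near $a$; the bad frames form a nowhere dense subset of $U(n)$. Then $\alpha^l$ is a positive multiple of $\sum_{|J|=l}\pi_J^*\beta_J$, and no sign bookkeeping is needed.

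One small inaccuracy in your last paragraph: after intersecting the neighbourhoods $N_k$, the maps $\pi_k$ are in general no longer proper onto $\pi_k(N)$. You never use properness at that point, though. You only need the uniform fibre bound $d_k$ and the boundedness of $\pi_k(N_k)$, and both survive the shrinking.
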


Let $A \subset  \mathbb{C}^n$ be an analytic subset of pure dimension $l<n$. Consider the collection $\mathscr{K}$ of all compact sets in $\mathbb{C}^n$. Then
\begin{align*}
\displaystyle m^*_n(A)=\inf\left\lbrace \sideset{}{_{j=1}^{\infty}}{\sum} \int_{K_j\cap A} \alpha ^l: K_1,K_2,\ldots\in \mathscr{K}\;\;\text{so that} A\subset \sideset{}{_{j=1}^{\infty}}{\bigcup} K_j \right\rbrace,
\end{align*}
which is called the Lebesgue outer measure of $A$ in $\mathbb{C}^n$. Clearly by Lemma \ref{ln2.5a}, we have $m^*_n(A)=0$. Also by the definition of $m^*_n$, for a given $\delta>0$ there exists a countable collection $\{K_j\}$ of compact sets such that $A\subset \sideset{}{_{j=1}^{\infty}}{\bigcup} K_j$ satisfying $\sideset{}{_{j=1}^{\infty}}{\sum} \int_{K_j\cap A} \alpha ^l<\delta$.
Since $A$ is a closed set, it follows that $K_j\cap A$ is also a compact set for $j=1,2,\ldots$ and so the set $K_j\cap A$ is also measurable for $j=1,2,\ldots$. Again since the integral $\int_{K_j\cap A} \alpha ^l$ is considered as a measure of $K\cap A$, we have $\sideset{}{_{j=1}^{\infty}}{\sum} m(K_j\cap A)<\delta$.

\medskip
Let us take $K_j=\ol B(a_j,r_j)$, where $a_j\in\mathbb{C}^n\backslash \{0\}$ and $r_j>0$ for $j=1,2,\ldots$. Choose $a_j$ and $r_j$ in such a way that $r_j<||a_j||\delta^j$ for $j=1,2,\ldots$ and $\lim\limits_{j\to\infty}||a_j||=\infty$, where $0<\delta<1$.
Let
\begin{align*}
 \displaystyle E=\sideset{}{_{j=1}^{\infty}}{\bigcup} \ol B(a_j,r_j).
 \end{align*}

Clearly $\lim\limits_{j\to\infty}||a_j||=\infty$ and $\sideset{}{_{j=1}^{\infty}}{\sum}\frac{r_j}{||a_j||}<\infty$. Here $E$ is called an $\varepsilon$-set in $\mathbb{C}^n$. Obviously $A\subset E$ (see \cite{Majumder-Sarkar-2026}).

Let $S=\{\|z\|=r: z\in E,\quad \|z\|>1\}$, where $E=\sideset{}{_{j=1}^{\infty}}{\bigcup}B_j=\sideset{}{_{j=1}^{\infty}}{\bigcup} \ol B(a_j,r_j)$ is an $\varepsilon$-set in $\mathbb{C}^n$. Then the logarithmic measure $l_j$ of the set of $r$ corresponding to $\|z\|=r$  which $B_j$ meets is given by
\begin{align*} l_j=\int\limits_{\|a\|-r_j}^{\|a\|+r_j}\frac{d r}{r}=\log \frac{\|a\|+r_j}{\|a\|-r_j}<3\frac{r_j}{\|a_j\|}, \quad \text{if}\;r_j<\frac{1}{2}\|a_j\|.
\end{align*}

Obviously $\sideset{}{_{j=1}^{\infty}}{\sum} l_j<+\infty$.

\medskip
Let $f$ be a holomorphic function in $\|z\|\leq R$, where $R>0$. For $0\leq r\leq R$, we define 
\begin{align*}
M(r,f)=\max\limits_{||z||=r}|f(z)|.
\end{align*}

The function $M(r,f)$ is called the growth function of $f$. Obviously $M(r,f)$ is steadily increasing function of $r$ and for a non-constant holomorphic function $f$ in $\mathbb{C}^n$, we have $M(r,f)\to \infty$ as $r\to \infty$. Also by the maximum modulus principle, any holomorphic function $f(z)$ in $\|z\|\leq r$ attains its maximum modulus at a point of $\|z\|=r$.

\begin{lem}\label{ln2.10}\cite{Majumder-Sarkar-2026} Let $f$ be a transcendental meromorphic function in $\mathbb{C}^n$ of finite order $\rho$. 
Then there exists an $\varepsilon$-set $E$ in $\mathbb{C}^n$ such that
\beas \displaystyle \left|\frac{\partial_{z_i}(f(z))}{f(z)}\right|\leq ||z||^{\rho-1+\delta},\eeas
holds for all large values of $||z||$, where $z\in \mathbb{C}^n\backslash E$ and $\delta>0$ is a given constant. 
\end{lem}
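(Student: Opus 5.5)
The plan is to prove Lemma \ref{ln2.10} by reducing it, along complex lines, to the classical one-variable logarithmic-derivative estimate of Gundersen type. The $\varepsilon$-set machinery set up just before the statement then controls the exceptional set.

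Write $f=g/h$ with $g,h$ entire, coprime outside an analytic set of codimension at least $2$. Choose $g$ and $h$ of order at most $\rho$; this is possible by the Weierstrass--Stoll representation of divisors of finite order. Then
\[
\frac{\partial_{z_i}(f)}{f}=\frac{\partial_{z_i}(g)}{g}-\frac{\partial_{z_i}(h)}{h},
\]
so it suffices to treat an entire function $\phi\in\{g,h\}$ of order at most $\rho$.

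For $\phi$ the estimate proceeds in the following steps.
\begin{enumerate}
\item[(a)] Fix $z$ and the complex line $\zeta\mapsto z+\zeta e_i$. Apply the Poisson--Jensen formula on the polydisc, or the Borel--Carath\'eodory inequality together with a Hadamard-type factorization, on the disc $|\zeta|<R=2\|z\|$. This gives
\[
\left|\frac{\partial_{z_i}\phi(z)}{\phi(z)}\right|\le \frac{C\,\log M(4\|z\|,\phi)}{\|z\|}+\sum_{|w_k-z|<R}\frac{2}{|w_k-z|},
\]
where the $w_k$ are the zeros of $\phi$ on that line.
\item[(b)] Bound the first term by $\|z\|^{\rho-1+\delta/2}$, since $\log M(r,\phi)=O(r^{\rho+\delta/4})$.
\item[(c)] The zero term is the real issue. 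The zero set $A=\phi^{-1}(0)$ is an analytic set of pure dimension $n-1$, and its projected counting function $n_\nu(t)$ is $O(t^{\rho+\delta/4})$ because $N(r,0;\phi)\le T(r,\phi)+O(1)$.
\item[(d)] Cover $A\cap\{2^k\le\|z\|<2^{k+1}\}$ by balls $\ol B(a_j,r_j)$ of radius comparable to $\|a_j\|^{1-\rho-\delta}\cdot\|a_j\|^{-(\rho+\delta)}$. Use the standard Cartan-lemma argument in each complex line, or a direct volume argument via $\int_{A[t]}\upsilon^{n-1}\le t^{2(n-1)}n_\nu(t)$. The sum of the $r_j/\|a_j\|$ over a dyadic shell is then $O(2^{-k\delta/4})$, which is summable.
\item[(e)] Let $E$ be the union of these balls; by the construction preceding the lemma it is an $\varepsilon$-set. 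Outside $E$ every zero $w_k$ satisfies $|w_k-z|\ge\|z\|^{1-\rho-\delta}$-scale separation in the Cartan sense, so the zero sum is at most (number of zeros in $|\zeta|<R$) times $\|z\|^{-1}$ times the covering gain.
\item[(f)] Using the standard estimate $\sum_{|w_k|<R} 1/|w_k-z|\le n(R)\log$-type terms off Cartan discs, the zero sum is $\le \|z\|^{\rho-1+\delta}$.
\end{enumerate}

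Combining (b) and (f) for $g$ and $h$ gives the claim for all large $\|z\|$ with $z\notin E$. Here $E$ is the union of the two exceptional $\varepsilon$-sets, and a union of two $\varepsilon$-sets is again an $\varepsilon$-set.

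The main obstacle is step (d): making the Cartan/Gundersen exceptional discs, which are constructed line by line, into a single countable family of balls in $\mathbb{C}^n$ that is independent of the chosen line and still satisfies $\sum r_j/\|a_j\|<\infty$.

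My first attempt would avoid line-by-line discs entirely. I would cover the zero set $A$ directly using the bound $\int_{A[t]}\upsilon^{n-1}=O(t^{2n-2+\rho+\delta/4})$ together with a Vitali covering argument on each dyadic shell. Then I would apply the one-variable lemma only to points at distance at least $r_j$ from $A$.
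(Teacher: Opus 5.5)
The paper gives no proof of this lemma; it is quoted from Majumder--Sarkar. So your outline has to stand on its own, and it fails at step (d). You flagged that step as the main obstacle, but the failure is fatal, not technical. For $n\ge 2$ the statement is false with the paper's notion of $\varepsilon$-set, namely $E=\bigcup_j\overline B(a_j,r_j)$ with $\|a_j\|\to\infty$ and $\sum_j r_j/\|a_j\|<\infty$.

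The obstruction is dimensional. The zero set $A$ of $\phi$ is not discrete: it has real dimension $2n-2\ge 2$. Apart from components invariant under translation in the $z_i$-direction, $A$ is the polar set of $\partial_{z_i}(\phi)/\phi$, so $E$ must contain it. A volume bound such as $\int_{A[t]}\upsilon^{n-1}\le t^{2(n-1)}n_\nu(t)$ controls $\sum_j r_j^{2n-2}$, not $\sum_j r_j$. Suppose $A$ contains a complex hyperplane. Covering its part in the shell $2^k\le\|z\|<2^{k+1}$ by balls of radius $r\ll 2^k$ needs about $(2^k/r)^{2n-2}$ balls. That shell then contributes about $(2^k/r)^{2n-3}\ge 1$ to $\sum r_j/\|a_j\|$, not $O(2^{-k\delta/4})$, and shrinking the radii only makes it worse. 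No Vitali or Cartan argument changes this. Step (e) is even further out of reach, since it needs a whole neighbourhood of $A$ inside $E$.

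Here is a concrete counterexample. Take $n=2$, $f(z)=\sin z_2$ (order $\rho=1$) and $i=2$. Then $\partial_{z_2}(f)/f=\cot z_2$ has poles along the line $L=\{z_2=0\}$, so any admissible $E$ contains $L\cap\{\|z\|\ge R_0\}$.

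Suppose $E=\bigcup_j\overline B(a_j,r_j)$ is an $\varepsilon$-set. Only finitely many $j$ have $r_j>\|a_j\|/2$, and those balls are bounded. So for large $R$, only balls with $r_j\le\|a_j\|/2$ meet $L_R=L\cap\{R\le\|z\|\le 2R\}$. Each such ball has $\|a_j\|\le 4R$ and meets $L$ in a disc of area at most $\pi r_j^2$. Since $L_R$ has area $3\pi R^2$, these balls satisfy $\sum r_j\ge(\sum r_j^2)^{1/2}\ge\sqrt3\,R$. Hence they contribute at least $\sqrt3/4$ to $\sum_j r_j/\|a_j\|$. Each such ball lies in $\{\|a_j\|/2\le\|z\|\le 3\|a_j\|/2\}$ and so meets at most three of the annuli $L_{2^k}$. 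Summing over $k$ gives $\sum_j r_j/\|a_j\|=\infty$, a contradiction.

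So no route proves the lemma for $n\ge 2$. Your outline is the classical one-variable Gundersen--Cartan argument, which is sound only for $n=1$, where zeros and poles are discrete.

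The same flaw sits in the paper's remark ``Obviously $A\subset E$'' and in the use of this lemma in Lemma~\ref{ln2.11}. There the paper takes $A\subset E$. A nonempty zero hypersurface of $g$ meets every sufficiently large sphere. So the radius set $S$ contains all large $r$ and cannot have finite logarithmic measure.
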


\begin{lem}\label{ln2.11} Let $f$ be a non-constant entire function in $\mathbb{C}^n$ of finite order. For $a\in\mathbb{C}^n$, if $f=a\rightleftharpoons \partial_{z_i}(f)=a$, where $i\in\{1,2,\ldots,n\}$, then $\partial_{z_i}(f)-a=A_i(f-a)$ for some non-zero constant $A_i$ in $\mathbb{C}$.
\end{lem}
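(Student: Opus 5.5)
We prove Lemma \ref{ln2.11} by the standard argument: form the quotient of the two shifted functions and show that it is a zero-free entire function of polynomial growth, hence constant. Note that $a$ is meant to be a complex number. Fix $i$ and put
\[
\phi(z)=\frac{\partial_{z_i}(f(z))-a}{f(z)-a}.
\]
Since $f=a\rightleftharpoons \partial_{z_i}(f)=a$, the divisors $\mu^0_{f-a}$ and $\mu^0_{\partial_{z_i}(f)-a}$ coincide. Because both functions are entire, the divisor of $\phi$ is therefore identically zero. So $\phi$ is an entire function on $\mathbb{C}^n$ without zeros. The case $\partial_{z_i}(f)\equiv a$ would force $f-a$ to have no zeros; we handle it at the end.

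Since $\mathbb{C}^n$ is simply connected, $\phi=e^{P}$ for some entire function $P$. We show that $P$ is constant by bounding the growth of $\phi$. Write
\[
\phi=\frac{\partial_{z_i}(f-a)}{f-a}-\frac{a}{f-a}.
\]
Then
\[
T(r,\phi)=m(r,\phi)\le m\Big(r,\frac{\partial_{z_i}(f-a)}{f-a}\Big)+m\Big(r,\frac{1}{f-a}\Big)+O(1).
\]
This route is awkward, because $m(r,1/(f-a))$ may be as large as $T(r,f)$. It only shows that $\phi$ has finite order, by the logarithmic derivative estimate of Lemma \ref{ln2.8} with $R=2r$ together with finite order of $f$. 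That is still useful. Since $\rho(\phi)<\infty$, Lemma \ref{ln2.6} shows that $P$ is a polynomial.

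To force $\deg P=0$ we use a pointwise estimate instead. By Lemma \ref{ln2.10} applied to $f-a$, there is an $\varepsilon$-set $E$ such that $|\partial_{z_i}(f)/(f-a)|\le \|z\|^{\rho-1+\delta}$ outside $E$. Also $|a/(f-a)|$ is small wherever $|f|$ is large. On a sequence of points $z$ with $|f(z)|=M(\|z\|,f)$ and $z\notin E$, this gives $|\phi(z)|\le \|z\|^{\sigma}$. Here the exceptional radii have finite logarithmic measure, so we can choose $r$ avoiding them and use the maximum modulus point on $\|z\|=r$. That bound alone does not rule out $e^{P}$ being large elsewhere.

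This is the main obstacle, and the cleaner plan is to compare characteristics directly. From $\partial_{z_i}(f)-a=e^{P}(f-a)$ we have
\[
T(r,e^P)\le T(r,\partial_{z_i}(f))+T(r,f)+O(1)\le O(T(r,f))
\]
outside a small set. This gives only finite order, so we need a more refined argument. Restrict to complex lines: for generic $w,v\in\mathbb{C}^n$ set $g(t)=f(w+tv)$, an entire function of one variable of finite order. The one-variable analogue of the identity is not available, because $\partial_{z_i}$ is not a derivative along the line unless $v=e_i$. So take $v=e_i$: for fixed $w$, set $g_w(t)=f(w+te_i)$. Then
\[
g_w'(t)-a=e^{P(w+te_i)}\big(g_w(t)-a\big),
\]
and $g_w$, $g_w'$ share $a$ CM in one variable. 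By the classical one-variable argument (Gundersen/Yang type), $P(w+te_i)$ must be constant in $t$. For polynomial $P$ this uses a Clunie/Wiman–Valiron comparison: if $\deg_t P\ge1$, the equation forces $\rho(g_w)>\deg_t P$-type contradictions with $|g_w'/g_w|\le |t|^{\rho-1+\delta}$. Hence $\partial_{z_i}P\equiv0$. The remaining dependence of $P$ on the other variables is handled as follows. In the equation $\partial_{z_i}(f)-a=e^{P}(f-a)$, where now $P$ is independent of $z_i$, fix the other variables at a point where $g_w$ is non-constant. The one-variable conclusion then gives the constant $e^{P(w)}$. We then conclude that $P$ is constant by showing that the constant obtained from the one-variable theory is determined by the order of $g_w$ along lines. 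This last identification is the step I expect to be hardest, and I would attempt it by comparing growth of $f$ on two parallel lines via continuity of $P$ and Lemma \ref{ln2.10}.

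The degenerate case $\partial_{z_i}(f)\equiv a$ gives $A_i=0$. In that case the hypothesis forces $f\ne a$ everywhere while $f=a+az_i+h$. For finite order this yields a contradiction unless the hypothesis is vacuous, and that case must be handled or excluded. Finally we set $A_i=e^{P}$.
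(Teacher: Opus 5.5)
\textbf{There is a genuine gap at the last step, and that step cannot be completed as stated.}

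Your reduction is correct up to $\partial_{z_i}P\equiv 0$. The quotient $\phi$ is zero-free, and $T(r,\phi)\le T(r,f)+O(\log r)$ makes $P$ a polynomial. On the lines $w+te_i$, the Gundersen--Yang theorem (a non-constant finite-order entire $g$ sharing $a$ CM with $g'$ satisfies $g'-a=c(g-a)$) forces $P(w+te_i)$ to be constant in $t$. This holds for all $w$ off the proper analytic set where $g_w$ is constant, so it needs $\partial_{z_i}(f)\not\equiv 0$. Cite that theorem rather than your Wiman--Valiron sketch, which again only controls $e^{P}$ at maximum-modulus points.

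The final step is left open, and your proposed mechanism cannot close it. Along each line $w+te_i$, the one-variable data are consistent with every value of $e^{P(w)}$: $g(t)=Ce^{\lambda t}+a-a/\lambda$ satisfies $g'-a=\lambda(g-a)$ for every $\lambda\neq 0$. Moreover $g_w$ has order $1$ whatever $\lambda$ is; only the \emph{type} $|e^{P(w)}|$ varies. So no comparison of orders along lines can make $P$ constant. Only the global finite-order hypothesis, used in the transversal directions, can.

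The statement is in fact false without an extra hypothesis, and your phrase ``fix the other variables at a point where $g_w$ is non-constant'' silently assumes it. Take $n\ge 2$, $a\neq 0$, $i=1$ and $f(z)=a-ae^{-z_2}$. Both $f-a=-ae^{-z_2}$ and $\partial_{z_1}(f)-a\equiv -a$ are zero-free, so $f=a\rightleftharpoons \partial_{z_1}(f)=a$ holds and $f$ has order $1$. Yet $(\partial_{z_1}(f)-a)/(f-a)=e^{z_2}$ is not constant. So the degenerate case that matters is $\partial_{z_i}(f)\equiv 0$, not $\partial_{z_i}(f)\equiv a$; the latter is simply excluded by the sharing hypothesis.

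Assume $\partial_{z_i}(f)\not\equiv 0$, which is automatic when $a=0$. Write $z'$ for the variables other than $z_i$. The missing step then goes as follows.
\begin{enumerate}
\item[(a)] Once $\partial_{z_i}P\equiv 0$, the function $u=f-a+ae^{-P}$ satisfies $\partial_{z_i}u=e^{P}u$.
\item[(b)] Hence $u(z)=c(z')\exp\big(z_ie^{P(z')}\big)$, with $c=u|_{\{z_i=0\}}$ entire of finite order.
\item[(c)] $c\not\equiv 0$, since otherwise $f=a-ae^{-P}$ and $\partial_{z_i}(f)\equiv 0$.
\item[(d)] Suppose $P$ were non-constant. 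Take $|z_i|=r$ with $z_ie^{P(z')}=r|e^{P(z')}|$, and take $z'=\zeta v$ on a generic complex line with $|\zeta|=r$. Choose $\zeta$ in an arc where $\operatorname{Re}P(\zeta v)\ge c_1r^{\deg P}$. By the minimum-modulus theorem for $\zeta\mapsto c(\zeta v)$, $\log|c(\zeta v)|\ge -r^{K}$ there, for $r$ outside a set of finite measure.
\item[(e)] This gives $\log M(2r,u)\ge r\exp(c_1r^{\deg P})-r^{K}$, contradicting $\rho(u)\le\max\{\rho(f),\deg P\}<\infty$.
\end{enumerate}

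The paper's own proof does not supply this step either. It follows the maximum-modulus route you correctly abandoned. It relies on a bound $|P_i(z)|\ge(1-o(1))\|a_{|\alpha|}\|_1\|z\|^{m_i}$ for all large $\|z\|$, which is false for $n\ge 2$ because a non-constant polynomial vanishes on an unbounded hypersurface. It also bounds $|P|$ rather than $\operatorname{Re}P=\log|e^{P}|$ at the points $z_k$. The example above shows that no argument can prove the lemma in the generality stated.
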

\begin{proof} Note that $f=a\rightleftharpoons \partial_{z_i}(f)=a$, where $f$ is a non-constant entire function in $\mathbb{C}^n$ of finite order $\rho$. Using Lemma \ref{ln2.6}, we deduce that
\begin{align}\label{ln11.0}
\frac{\partial_{z_i}(f)-a}{f-a}=e^{P_i},
\end{align}
where $P_i$ is a polynomial in $\mathbb{C}^n$. We want to prove that $P_i$ is a constant. Suppose on contrary that $P_i$ is non-constant and let $\deg(P_i)=m_i$. Set
\begin{align*}
P_i(z)=\sum\limits_{\alpha_1,\ldots,\alpha_n=0}^{m_i}a_{\alpha_1\ldots\alpha_n}z_1^{\alpha_1}\ldots z_n^{\alpha_n}.
\end{align*}

Define
\begin{align*}
||a_{|\alpha|}||_{1}=\sum\limits_{\alpha_1+\ldots+\alpha_n=|\alpha|} |a_{\alpha_1\ldots\alpha_n}|.
\end{align*}

Now one can easily prove that (see the proof of Satz 1.2 \cite{Jank-Volkmann-1985})
\begin{align}\label{ln11.0a}
(1-o(1))||a_{|\alpha|}||_1r^{m_i}\leq |P_i(z)|\leq (1+o(1))||a_{|\alpha|}||_1r^{m_i}
\end{align}
for large values of $r=\|z\|$, where $|\alpha|=m_i$.

If we take $g=f-a$, then from (\ref{ln11.0}), we obtain
\begin{align}\label{ln11.3}
\frac{\partial_{z_i}(g)}{g}-\frac{a}{g}=e^P.
\end{align}

Let $A=\operatorname{supp}\mu^0_g$. Clearly $A$ is either empty or an analytic set of pure dimension $n-1$. Then the Lebesgue measure of $A$ in $\mathbb{C}^n$ is zero. Now by Lemma \ref{ln2.10}, there exists an $\varepsilon$-set $E$ in $\mathbb{C}^n$ such that $A\subset E$ and
\begin{align}\label{ln11.1} 
\left|\frac{\partial_{z_i}(g(z))}{g(z)}\right|\leq ||z||^{\rho-1+\delta}
\end{align}
holds for all large values of $||z||$, where $z\in \mathbb{C}^n\backslash E$ and $\delta>0$ is a given constant.
Let $S=\{\|z\|=r: z\in E,\quad \|z\|>1\}$. Then $S$ is of finite logarithmic measure. Choose a sequence $\{z_k\}$ such that $\|z_k\|=r_k$, $r_k\not\in [0,1]\cup S$ and $|g(z_k)|=M(r_k,g)$, $r_k\to +\infty$ as $k\to +\infty$. Clearly
\begin{align}\label{ln11.2} 
\lim\limits_{k\rightarrow +\infty}\frac{|a|}{|g(z_k)|}=\lim\limits_{k\rightarrow +\infty}\frac{|a|}{M(r_k,g)}=0,\;\;\text{i.e.,}\;\;\frac{|a|}{|g(z_k)|}=o(1)
\end{align}
for sufficiently large $\|z_k\|=r_k\not\in [0,1]\cup S$. 

Taking the principal branch of the logarithm, we deduce from (\ref{ln11.3}) that
\begin{align}\label{ln11.4}
P(z)=\log \left( \frac{\partial_{z_i}(g(z))}{g(z)} - \frac{a}{g(z)} \right)
=\log \left| \frac{\partial_{z_i}(g(z))}{g(z)}-\frac{a}{g(z)}\right|+ i\arg \left(\frac{\partial_{z_i}(g(z))}{g(z)}-\frac{a}{g(z)}\right).
\end{align}

Substituting (\ref{ln11.0a}), (\ref{ln11.1}) and (\ref{ln11.2}) into (\ref{ln11.4}), we get
\begin{align*} ||a_n||_1 r_k^{m_i}(1 - o(1))\leq |P(z_k)|&\leq \log \left|\frac{\partial_{z_i}(g(z_k))}{g(z_k)}\right|+\log \left(\frac{|a|}{|g(z_k)|}+e\right)+O(1)\\&\leq (\rho-1+\delta)\log r_k+O(1),
\end{align*}
which is impossible. Hence $P_i$ is a constant. If we take $A_i=e^{P_i}$, then from (\ref{ln11.0}), we have $\partial_{z_i}(f)-a=A_i(f-a)$ for some non-zero constant $A_i$ in $\mathbb{C}$.
\end{proof}

\begin{lem}\label{ln2.5}\cite[Lemma 1.2]{Hu-Yang-1996} Let $f$ be a non-constant meromorphic function in $\mathbb{C}^n$ and let $a_1,a_2,\ldots,a_q$ be different points in $\mathbb{C}\cup\{\infty\}$. Then
\begin{align*}
(q-2)T(r,f)\leq \sideset{}{_{j=1}^{q}}{\sum} \ol N(r,a_j;f)+O(\log (rT(r,f)))
\end{align*}
holds only outside a set of finite measure on $\mathbb{R}^+$.
\end{lem}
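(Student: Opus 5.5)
We would follow the classical Nevanlinna argument for the second main theorem, replacing the derivative $f'$ by a partial derivative. Since $f$ is non-constant, we fix an index $j\in\{1,\ldots,n\}$ with $\partial_{z_j}(f)\not\equiv 0$ and write $f_j:=\partial_{z_j}(f)$. After a Möbius transformation $L$ (which changes $T(r,f)$ only by $O(1)$ and leaves the reduced counting functions unchanged), we may assume that $a_q=\infty$ and that $a_1,\ldots,a_{q-1}$ are finite. The statement is trivial for $q\le 2$, so we assume $q\ge 3$.

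\emph{Proximity estimate.} Put $F=\sum_{k=1}^{q-1}\frac{1}{f-a_k}$. Let $\delta=\min_{k\ne l}|a_k-a_l|$. At each point, at most one term of $F$ can be large, so
\begin{align*}
\sum_{k=1}^{q-1} m(r,a_k;f)\le m(r,F)+O(1).
\end{align*}
Writing $F=\frac{1}{f_j}\sum_k\frac{f_j}{f-a_k}$ gives
\begin{align*}
m(r,F)\le m\Bigl(r,\frac{1}{f_j}\Bigr)+\sum_k m\Bigl(r,\frac{\partial_{z_j}(f-a_k)}{f-a_k}\Bigr).
\end{align*}
We then use Jensen's formula in $\mathbb{C}^n$ (the first main theorem applied to $f_j$), in the form
\begin{align*}
m\Bigl(r,\frac{1}{f_j}\Bigr)=T(r,f_j)-N\Bigl(r,\frac{1}{f_j}\Bigr)+O(1),
\end{align*}
together with
\begin{align*}
T(r,f_j)\le m(r,f_j)+N(r,f_j)\le m(r,f)+m\Bigl(r,\frac{f_j}{f}\Bigr)+N(r,f_j).
\end{align*}
Adding $m(r,f)=m(r,a_q;f)$ to both sides and using the first main theorem, this yields
\begin{align*}
(q-1)T(r,f)\le N(r,f_j)+N(r,f)-N\Bigl(r,\frac{1}{f_j}\Bigr)+\sum_{k}N(r,a_k;f)+S(r),
\end{align*}
where $S(r)$ collects the logarithmic-derivative proximity terms.

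\emph{Error terms.} By Lemma \ref{ln2.7} and Lemma \ref{ln2.8}, with the choice $R=r+1/T(r,f)$ together with the standard Borel growth lemma, we get
\begin{align*}
m\Bigl(r,\frac{\partial_{z_j}(g)}{g}\Bigr)=O(\log (rT(r,g)))
\end{align*}
outside a set of finite measure. We apply this to $g=f$ and to $g=f-a_k$; since $T(r,f-a_k)=T(r,f)+O(1)$, we obtain $S(r)=O(\log(rT(r,f)))$ off an exceptional set of finite measure.

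\emph{Divisor comparison, the main step.} We must show that
\begin{align*}
N(r,f_j)+N(r,f)+\sum_k N(r,a_k;f)-N\Bigl(r,\frac{1}{f_j}\Bigr)\le \sum_{k=1}^{q}\ol N(r,a_k;f)+T(r,f),
\end{align*}
which gives the claim after subtracting $T(r,f)$. This is a pointwise inequality between divisors, checked off an analytic set of codimension $\ge 2$, which contributes nothing to the counting functions.
\begin{itemize}
\item If locally $f-a_k=h^pu$ with $h$ irreducible and $u$ a unit, then $\partial_{z_j}(f)=h^{p-1}(p\,\partial_{z_j}(h)u+h\,\partial_{z_j}(u))$. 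Hence $\mu^0_{f_j}\ge p-1$ along $\{h=0\}$, so $\mu^{a_k}_f-\mu^0_{f_j}\le 1$, giving the reduced count $\ol N(r,a_k;f)$.
\item The same computation at poles shows that $\mu^\infty_{f_j}\le\mu^\infty_f+1$. Therefore $N(r,f_j)+N(r,f)\le 2N(r,f)+\ol N(r,f)\le T(r,f)+N(r,f)+\ol N(r,f)$; the remaining $N(r,f)$ is absorbed because we only added $m(r,f)$ rather than $T(r,f)$ on the left.
\end{itemize}
The bookkeeping must be arranged so that exactly $(q-2)T(r,f)$ survives. We expect this step to be the main obstacle, together with making the multiplicity comparison rigorous via Stoll's divisor definitions, where the codimension-two singular locus must be discarded.
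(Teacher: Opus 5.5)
The paper gives no proof of this lemma; it quotes it from Hu--Yang, so your argument has to stand on its own. Most of your ingredients are sound: the Möbius normalization, $\sum_{k<q}m(r,a_k;f)\le m(r,F)+O(1)$, the error terms via Lemma~\ref{ln2.8}, Jensen's inequality and Borel's lemma, and the local multiplicity comparisons off a codimension-two set (which also handles the indeterminacy locus, where all level sets of $f$ meet). The gap is the bookkeeping you leave open, and it is not just a matter of arranging terms: both of your key displays are false.

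What your chain actually proves is $\sum_{k=1}^{q-1}m(r,a_k;f)\le m(r,f)+N(r,f_j)-N(r,1/f_j)+S(r)+O(1)$. Substituting $m(r,a_k;f)=T(r,f)-N(r,a_k;f)+O(1)$ and $m(r,f)=T(r,f)-N(r,f)$ gives
\begin{align*}
(q-2)T(r,f)\le\sum_{k=1}^{q-1}N(r,a_k;f)+N(r,f_j)-N(r,f)-N\Bigl(r,\frac{1}{f_j}\Bigr)+S(r)+O(1).
\end{align*}
Your display $(q-1)T(r,f)\le N(r,f_j)+N(r,f)-N(r,1/f_j)+\sum_k N(r,a_k;f)+S(r)$ is this inequality with $m(r,f)$ replaced by $N(r,f)$. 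It already fails for $f=e^{z_1}$ with the values $0,1,\infty$. There it reads $2T(r,f)\le N(r,1;f)+S(r)\le T(r,f)+S(r)+O(1)$, which is impossible because $T(r,e^{z_1})$ grows like a positive constant times $r$.

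Your ``must show'' divisor inequality is also false. Take $f=1/(e^{z_1}-1)$ with the values $0,-1,\infty$. Then $f$ omits $0$ and $-1$, and $\partial_{z_1}f$ is zero-free with exactly double poles where $f$ has simple ones. So the display reduces to $2N(r,f)\le T(r,f)$, although here $N(r,f)=N(r,1;e^{z_1})$ carries essentially all of $T(r,f)$. The leftover $N(r,f)$ is not ``absorbed''; your two errors simply cancel.

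With the correct display, nothing needs absorbing. Your second bullet gives $N(r,f_j)-N(r,f)\le\overline{N}(r,f)$. Your first bullet gives $\sum_{k=1}^{q-1}N(r,a_k;f)-N(r,1/f_j)\le\sum_{k=1}^{q-1}\overline{N}(r,a_k;f)$, once you note that the $a_k$-divisors for distinct finite $a_k$ have disjoint supports off a codimension-two set. Inserting both bounds yields $(q-2)T(r,f)\le\sum_{k=1}^{q}\overline{N}(r,a_k;f)+S(r)+O(1)$, with $S(r)=O(\log(rT(r,f)))$ outside a set of finite measure. That is the lemma.
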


\begin{lem}\label{ln2.12}\cite{Majumder-Sarkar-2027} Let $g(z)$ be a meromorphic function in $\mathbb{C}^n$. If $\partial^2_{z_i}(g(z))\equiv 0$ for all $i=1,2,\ldots,n$, then $g(z)$ must be a polynomial in $\mathbb{C}^n$.
\end{lem}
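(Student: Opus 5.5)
We argue by induction on the dimension $n$, peeling off one variable at a time. The idea is that the condition $\partial^2_{z_1}(g)\equiv 0$ forces $g$ to be affine in $z_1$ with coefficients that are meromorphic in the remaining variables, and those coefficients inherit the hypothesis.

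\textbf{Base case $n=1$.} Let $g$ be meromorphic on $\mathbb{C}$ with $g''\equiv 0$ off its discrete pole set $P$. Since $\mathbb{C}\setminus P$ is connected, $g'$ is a constant $\beta$ there, and so $g(z)=\alpha+\beta z$ on $\mathbb{C}\setminus P$. The right-hand side is bounded near each point of $P$, so every pole would be removable. Hence $P=\varnothing$ and $g$ is a polynomial of degree at most $1$.

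\textbf{Inductive step, reduction to affine form.} Let $n\ge 2$ and write $z=(z_1,z')$ with $z'=(z_2,\ldots,z_n)$. Let $\mathcal{P}$ be the polar set of $g$, which is empty or an analytic hypersurface, together with its indeterminacy locus.
\begin{enumerate}
\item[(a)] Choose $c\in\mathbb{C}$ such that the hyperplane $\{z_1=c\}$ is not contained in $\mathcal{P}$. All but countably many $c$ qualify, since $\mathcal{P}$ has only countably many irreducible components.
\item[(b)] Define $a(z')=g(c,z')$ and $b(z')=\partial_{z_1}(g)(c,z')$. Both are meromorphic functions on $\mathbb{C}^{n-1}$, because the restriction of a meromorphic function to a hyperplane not contained in its polar set is meromorphic.
\item[(c)] For $z'$ outside a thin set, the slice $z_1\mapsto g(z_1,z')$ is meromorphic on $\mathbb{C}$, not identically $\infty$, holomorphic at $z_1=c$, and has vanishing second derivative. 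By the base case it equals $a(z')+(z_1-c)b(z')$ and has no poles on that line.
\item[(d)] Hence $g(z)=a(z')+(z_1-c)\,b(z')$ holds on a dense open set, and so identically as meromorphic functions on $\mathbb{C}^n$.
\end{enumerate}

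\textbf{Inductive step, conclusion.} For $i\ge 2$ the hypothesis gives
\[
0\equiv \partial^2_{z_i}(g)=\partial^2_{z_i}(a)+(z_1-c)\,\partial^2_{z_i}(b).
\]
The functions $\partial^2_{z_i}(a)$ and $\partial^2_{z_i}(b)$ do not depend on $z_1$. Comparing the coefficients of $1$ and $z_1$ (for instance by differentiating once in $z_1$) therefore gives $\partial^2_{z_i}(a)\equiv 0\equiv\partial^2_{z_i}(b)$ for $i=2,\ldots,n$. By the induction hypothesis in $\mathbb{C}^{n-1}$, both $a$ and $b$ are polynomials. Then $g=a+(z_1-c)b$ is a polynomial in $\mathbb{C}^n$, which completes the induction.

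\textbf{Main obstacle.} The delicate step is the slicing argument in (a)–(d): making rigorous that the coefficients $a$ and $b$ are genuinely meromorphic, and that the affine identity obtained on generic lines extends to all of $\mathbb{C}^n$. The choice of $c$ in (a) handles the first point, and the identity theorem for meromorphic functions, applied on the dense open set from (c), handles the second. A pleasant byproduct is that $g$ turns out to have no poles at all.
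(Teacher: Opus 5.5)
\textbf{Verdict.} The paper contains no proof of this lemma to compare against: it is quoted from Majumder and Sarkar and used only once, in Case~2 of the proof of Theorem~\ref{t1.2}, where it is applied to an entire function. Your proposal therefore has to stand on its own, and it does; it is correct and complete.

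\textbf{Why each step holds.}
\begin{enumerate}
\item[(a)--(b)] The choice of $c$ makes $a$ and $b$ meromorphic on $\mathbb{C}^{n-1}$. Since $H=\{z_1=c\}$ is connected, $H\cap\mathcal{P}$ is an analytic subset of $H$. If it contained an open piece of $H$, it would be all of $H$. So $H\not\subset\mathcal{P}$ holds locally everywhere, and the local coprime representations restrict to $H$.
\item[(c)] Let $T=\{z' : (c,z')\in\mathcal{P}\}$. For $z'\notin T$, the line $\mathbb{C}\times\{z'\}$ is automatically not contained in $\mathcal{P}$ and meets it in a discrete set. Your base case then shows the slice is affine. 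Its value and slope at $z_1=c$ are the genuine values $a(z')$ and $b(z')$, because $g$ and $\partial_{z_1}g$ are holomorphic near $(c,z')$.
\item[(d)] The identity $g=a+(z_1-c)b$ holds pointwise on the connected dense complement of the proper analytic set $\mathcal{P}\cup(\mathbb{C}\times T)$. Hence it holds identically. One differentiation in $z_1$ then passes the hypothesis to $a$ and $b$.
\end{enumerate}
Since $\partial^2_{z_i}g\equiv 0$ for every $i$, the resulting polynomial has degree at most one in each variable. So you in fact get $g=\sum_{S\subseteq\{1,\dots,n\}}c_S\prod_{i\in S}z_i$.

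\textbf{A shorter alternative.} You could first remove the poles and then read off Taylor coefficients. Suppose the polar hypersurface were nonempty. Take a smooth point $w$ of it outside the indeterminacy locus. Some coordinate vector $e_i$ is transversal to the tangent hyperplane at $w$. Then $t\mapsto g(w+te_i)$ is a one-variable meromorphic function with a genuine pole at $t=0$ and vanishing second derivative, which contradicts your base case. So $g$ is entire. The coefficient computation of (\ref{e3.10})--(\ref{e3.11}) then kills every monomial containing some $z_i^{k}$ with $k\geq 2$.

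\textbf{Comparison.} The alternative avoids the bookkeeping about restricting meromorphic functions to hyperplanes and lines. It is also all the paper actually needs, since its $f$ is entire. Your induction needs no local analysis of the polar divisor and builds the multi-affine structure one variable at a time.
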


\section{\bf{Proofs of the main theorems}}

\begin{proof}[{\bf Proof of Theorem \ref{t1.1}}] 
By the given conditions, we have $f=a\Rightarrow \partial_{z_i}(f)=a$ and $f=b\Leftrightarrow \partial_{z_i}(f)=b$ for all $i=1,2,\ldots,n$.  Set $\mathcal{F}=\{f_{\omega}\}$, where $f_{\omega}(z)=f(\omega+z)$, $\omega\in\mathbb{C}^n$. Clearly $\mathcal{F}$ is a family of holomorphic functions defined on $\mathbb{C}^n$. By assumption for any function $g(z)=f(\omega+z)$, we have $g=a\Rightarrow \partial_{z_i}(g)=a$ and $g=b\Leftrightarrow  \partial_{z_i}(g)=b$ for all $i=1,2,\ldots,n$. Since $\mathcal{F}$ is a family of holomorphic functions in $\Omega\subset\mathbb{C}^n$, then by Lemma \ref{ln2.3}, we conclude that $\mathcal{F}$ is normal on $\Omega$. 
Now by Lemma \ref{ln2.1}, we have $f^{\#}(\omega)=f^{\#}_{\omega}(0)\leq M$ for some $M>0$ and for all $\omega\in\mathbb{C}^n$. Therefore Lemma \ref{ln2.3} gives us $\rho(f)\leq 1$. 
We now consider the following three cases.

\smallskip
{\bf Case 1.} Let $a$ be a Picard exceptional value of $f$. Then by Lemma \ref{ln2.6}, we assume that
\begin{align*}
f(z)-a=e^{P(z)},
\end{align*}
where $P(z)$ is a polynomial in $\mathbb{C}^n$ such that $\deg(P)=1$. Clearly $\partial_{z_i}(f(z))=\partial_{z_i}(P(z))e^{P(\zeta)}$ for $i=1,2,\ldots,n$. In this case, using Lemma \ref{ln2.5}, we deduce that $b$ is not a Picard exceptional values of $f$. Let $z_0\in\mathbb{C}^n$ be a zero of $f-b$. Since $f=b\Leftrightarrow \partial_{z_i}(f)=b$, we have $f(z_0)=a+e^{P(z_0)}=b$ and $\partial_{z_i}(f(z_0))=\partial_{z_i}(P(z_0))e^{P(z_0)}=b$ for $i=1,2,\ldots,n$. Now eliminating $e^{P(z_0)}$, we get $\partial_{z_i}(P)=\frac{b}{b-a}$ for $i=1,2,\ldots,n$. Therefore, we have 
\begin{align*}
f(z)=a+ce^{\frac{b}{b-a}(z_1+z_2+\ldots+z_n)},\;\;c\in\mathbb{C}\backslash \{0\}.
\end{align*}

\smallskip
{\bf Case 2.} Let $b$ be a Picard exceptional value of $f$. Clearly $a$ is not a Picard exceptional values of $f$. Now by Lemma \ref{ln2.6}, we may assume that
\begin{align*}
f(z)-b=e^{P(z)},
\end{align*}
where $P(z)$ is a polynomial in $\mathbb{C}^n$ such that $\deg(P)=1$. In this case also, one can easily obtain $\partial_{z_i}(P)=\frac{a}{a-b}$ for $i=1,2,\ldots,n$. Obviously $a\neq 0$. Therefore, we have 
\begin{align*}
f(z)=b+ce^{\frac{a}{b-a}(z_1+z_2+\ldots+z_n)},\;\;c\in\mathbb{C}\backslash \{0\}.
\end{align*}

\smallskip
{\bf Case 3.} Let both $a$ and $b$ be not Picard exceptional values of $f$. 
Set
\bea\label{e3.1} \Phi_i(z)=\frac{(f(z)-\partial_{z_i}(f(z)))\partial_{z_i}(f(z))}{(f(z)-a)(f(z)-b)},\eea
where $i=1,2,\ldots,n$ and $z\in\mathbb{C}^n$.
We now consider the following two sub-cases.\par

\smallskip
{\bf Sub-case 3.1.} Suppose $\Phi_i\not\equiv 0$ for atleast one $i\in\{1,2,\ldots,n\}$. For the sake of simplicity we may assume that $\Phi_k\not\equiv 0$. Then $f\not\equiv \partial_{z_k}(f)$. Since $f=a\Rightarrow \partial_{z_k}(f)=a$ and $f=b\Leftrightarrow \partial_{z_k}(f)=b$, one can easily prove that $\Phi_k$ is a holomorphic function in $\mathbb{C}^n$. 
Now from (\ref{e3.1}), we have 
\begin{align}\label{e3.2} 
\Phi_k(z)=\frac{1}{a-b}\left(a\frac{\partial_{z_k}(f(z))}{f(z)-a}-b\frac{\partial_{z_k}(f(z))}{f(z)-b}\right)\left(1-\frac{\partial_{z_k}(f(z))}{f(z)}\right).
\end{align}

Therefore using Lemma \ref{ln2.9} to (\ref{e3.2}), we get $m(r,\Phi_k)=o(\log r)$ as $r\to\infty$. Since $\Phi_k(z)$ is a holomorphic function in $\mathbb{C}^n$, we get $T(r,\Phi_k)=o(\log r)$ as $r\to\infty$, which shows that $\Phi_k(z)$ is a non-zero constant, say $c_k\in\mathbb{C}\backslash \{0\}$. Therefore from (\ref{e3.1}), we get
\begin{align}\label{e3.2a} 
(f(z)-\partial_{z_k}(f(z)))\partial_{z_k}(f(z))\equiv c_k(f(z)-a)(f(z)-b).
\end{align}

Following two sub-cases are immediately.\par

\smallskip
{\bf Sub-case 3.1.1.} Let $a\neq 0$. Since $f=a\Rightarrow \partial_{z_k}(f)=a$ and $f=b\Leftrightarrow \partial_{z_k}(f)=b$, it follows that both $f-a$ and $f-b$ have only simple zeros. Consequently from (\ref{e3.1}), we deduce that $\partial_{z_k}(f)$ has no zeros. Note that $\rho(\partial_{z_k}(f))\leq 1$ and so we may take 
\begin{align}\label{e3.3a}
\partial_{z_k}(f(z))=e^{Q(z)},
\end{align} where $Q(z)$ is a polynomial in $\mathbb{C}^n$ such that $\deg(Q)=1$. Therefore it is easy to prove that $\partial_{z_k}(f)-b$ has only simple zeros and so $f=b\rightleftharpoons \partial_{z_k}(f)=b$. Now by Lemma \ref{ln2.11}, we have
\begin{align}\label{e3.3b}
\partial_{z_k}(f)-b=A_k(f-b)
\end{align}
for some non-zero constant $A_k$ in $\mathbb{C}$. Note that $a$ is not a Picard exceptional value of $f$. Let $z_0$ be a zero of $f-a$. Since $f=a\Rightarrow \partial_{z_k}(f)=a$, we have $\partial_{z_k}(f(z_0))=a$. Then from (\ref{e3.3b}), we get $a-b=A_k(a-b)$. Since $a\neq b$, we have $A_k=1$ and so (\ref{e3.3b}) yields $f\equiv \partial_{z_k}(f)$, which is impossible. 

\smallskip
{\bf Sub-case 3.1.2.} Let $a=0$. Now from (\ref{e3.2a}), we have
\begin{align}\label{e3.4a} 
(f(z)-\partial_{z_k}(f(z)))\partial_{z_k}(f(z))\equiv c_k f(z)(f(z)-b).
\end{align}

Let $z_0\in\mathbb{C}^n$ be a simple zero of $f$. Since $f=0\Rightarrow \partial_{z_k}(f)=0$, it follows that $z_0$ is a zero of $\partial_{z_k}(f)$. Then from (\ref{e3.4a}), we immediately get a contradiction. Hence $f$ has no simple zero.
Again by a routine calculation, one can easily conclude from (\ref{e3.4a}) that all the zeros of $f(z)$ have multiplicity exactly $2$.  Consequently, we may take $f=h^2$, where $h$ is a holomorphic function in $\mathbb{C}^n$ such that $h$ has only simple zeros. Note that $\partial_{z_k}(f)=2h\partial_{z_k}(h)$. In this case, we have
$h^2=0\Rightarrow 2h\partial_{z_k}(h)=0$ and $h^2=b\Leftrightarrow 2h\partial_{z_k}(h)=b$. Also from (\ref{e3.4a}), we get
\begin{align}\label{e3.5} 
2(h(z)-2\partial_{z_k}(h(z)))\partial_{z_k}(h(z))\equiv c_k(h^2(z)-b)
\end{align}
for all $z\in\mathbb{C}^n$. Now from (\ref{e3.5}), we deduce that $\partial_{z_k}(h)$ has no zeros. Note that $\rho(\partial_u(h))\leq 1$ and so we may take $\partial_{z_k}(h(z))=e^{R(z)}$, where $R(z)$ is a polynomial in $\mathbb{C}^n$ such that $\deg(R)=1$. Again from (\ref{e3.5}), we get
\begin{align*}
 2h(z)e^{R(z)}-4e^{2R(z)}\equiv c_k(h^2(z)-b)
 \end{align*}
and so
\begin{align}\label{e3.6} 
(\partial_{z_k}(R(z))-c_k)h(z)\equiv (4\partial_{z_k}(R(z))-1)e^{R(z)}.
\end{align}

Since $0$ is not a Picard exception value of $h$, one can easily conclude from (\ref{e3.6}) that $\partial_{z_k}(R(z))-c_k=0$ and $4\partial_{z_k}(R(z))-1=0$. Consequently, we have 
\begin{align*}
\partial_{z_k}(R(z))=c_k=\frac{1}{4}
\end{align*}
and so from (\ref{e3.5}), we have 
\begin{align*}
(h(z)-4\partial_{z_k}(h(z)))^2\equiv b.
\end{align*}

Set $h(z)-4\partial_{z_k}(h(z))=d$, where $d$ is a root of the equation $x^2=b$. Since $f=h^2$ and $\partial_{z_k}(h(z))=e^{R(z)}$, we deduce that 
\begin{align*}
f(z)=16e^{2R(z)}+8de^{R(z)}+b.
\end{align*}

Clearly
\begin{align}\label{e3.7}
f(z)-b=16e^{2R(z)}+8de^{R(z)}=8(2e^{R(z)}+d)e^{R(z)}
\end{align}
and 
\begin{align}\label{e3.8}
\partial_{z_k}(f(z))-b=8e^{2R(z)}+2de^{R(z)}-d^2.
\end{align}

Since $f=b\Leftrightarrow \partial_{z_k}(f)=b$, from (\ref{e3.7}) and (\ref{e3.8}), we immediately get a contradiction.\par

\smallskip
{\bf Sub-case 3.2.} Let $\Phi_i\equiv 0$ for all $i=1,2,\ldots,n$. Since $\partial_{z_i}(f)\not\equiv 0$, it follows from (\ref{e3.1}) that 
\begin{align}\label{e3.9}
f(z)\equiv \partial_{z_i}(f(z))
\end{align} 
for all $i=1,2,\ldots,n$. Then from (\ref{e3.9}), we conclude that both $f$ and $\partial_{z_i}(f)$ have no zeros for all $i=1,2,\ldots,n$. If we take $f=e^{\zeta},$ where $\zeta$ is a non-constant entire function in $\mathbb{C}^n$, then from (\ref{e3.9}), we get $\partial_{z_i}(\zeta)=1$ for all $i=1,2,\ldots,n$.
By Taylor expansion, we have
\begin{align}\label{e3.10} 
\zeta(z)=\sum\limits_{\mu_1,\ldots,\mu_n=0}^{\infty} b_{\mu_1\ldots\mu_n} z_1^{\mu_1}\ldots z_n^{\mu_n},
\end{align}
where the coefficient $b_{\mu_1\ldots\mu_n}$ is given by
\begin{align}\label{e3.11} 
b_{\mu_1\ldots\mu_n}=\frac{1}{\mu_1!\ldots \mu_n!}\frac{\partial^{|I|}\zeta(0,0,\ldots,0)}{\partial z_1^{\mu_1}\cdots \partial z_n^{\mu_n}}.
\end{align}

Now from (\ref{e3.10}) and (\ref{e3.11}), we have $\zeta(z)=z_1+z_2+\ldots+z_n+A_0$, where $A_0=\zeta(0,0,\ldots,0)$. Therefore
\begin{align*}
f(z_1,\ldots,z_n)=ce^{z_1+z_2+\ldots+z_n},\;\;c=\exp(A_0).
\end{align*}
\end{proof}

\begin{proof}[{\bf Proof of Theorem \ref{t1.2}}] 
By the given conditions, we have $\partial_{z_i}(f)=a\Rightarrow f=a$ and $f=b\Leftrightarrow \partial_{z_i}(f)=b$ for all $i=1,2,\ldots,n$.  Set $\mathcal{F}=\{f_{\omega}\}$, where $f_{\omega}(z)=f(\omega+z)$, $\omega\in\mathbb{C}^n$. Clearly $\mathcal{F}$ is a family of holomorphic functions defined on $\mathbb{C}^n$. By assumption for any function $g(z)=f(\omega+z)$, we have $\partial_{z_i}(g)=a\Rightarrow g=a$ and $g=b\Leftrightarrow  \partial_{z_i}(g)=b$ for all $i=1,2,\ldots,n$. Since $\mathcal{F}$ is a family of holomorphic functions in $\Omega\subset\mathbb{C}^n$, then by Lemma \ref{ln2.3a}, we conclude that $\mathcal{F}$ is normal on $\Omega$. Now by Lemma \ref{ln2.1}, we have $f^{\#}(\omega)=f^{\#}_{\omega}(0)\leq M$ for some $M>0$ and for all $\omega\in\mathbb{C}^n$. Therefore Lemma \ref{ln2.3} gives us $\rho(f)\leq 1$. We put 
\begin{align}
\label{e4.1} \varphi_{ji}= \frac{\partial_{z_jz_i}^2(f)(\partial_{z_i}(f)-f)}{(f-b)(\partial_{z_i}(f)-a)}.
\end{align}

Now we consider the following two cases.

\smallskip
{\bf Case 1.} Suppose $\varphi_{st}\not\equiv 0$, where $s,t\in\{1,2,\ldots,n\}$. Then $f\not\equiv \partial_{z_t}(f)$ and $\partial_{z_sz_t}^2(f)\not\equiv 0$. Since $\partial_{z_t}(f)=a\Rightarrow f=a$ and $f=b\Leftrightarrow \partial_{z_t}(f)=b$, one can easily prove that $\varphi_{st}$ is a holomorphic function in $\mathbb{C}^n$. 
Now from (\ref{e4.1}), we have
\begin{align}\label{e4.2} 
\varphi_{st}=\frac{\partial_{z_t}(f)}{f-b}\frac{\partial_{z_sz_t}^2(f)}{\partial_{z_t}(f)-a}-\frac{\partial_{z_sz_t}^2(f)}{\partial_{z_t}(f)-a}-b\frac{\partial_{z_t}(f)}{f-b}\frac{\partial_{z_sz_t}^2(f)}{(\partial_{z_t}(f)-a)\partial_{z_t}(f)}.
\end{align}

Therefore using Lemma \ref{ln2.9} to (\ref{e4.2}), we get
\begin{align*}
 T(r,\varphi_{st})=m(r,\varphi_{st})&\leq m\left(r,\frac{\partial_{z_t}(f)}{f-b}\right)+m\left(r,\frac{\partial_{z_sz_t}^2(f)}{\partial_{z_t}(f)-a}\right)+m\left(r,\frac{\partial_{z_sz_t}^2(f)}{\partial_{z_t}(f)-a}\right)\\&+m\left(r,\frac{\partial_{z_t}(f)}{f-b}\right)+m\left(r,\frac{\partial_{z_sz_t}^2(f)}{(\partial_{z_t}(f)-a)\partial_{z_t}(f)}\right)+O(1)=o(\log r)
\end{align*}
as $r\to\infty$ and so $\varphi_{st}$ is a constant. Suppose $\varphi_{st}=c_{st}\in\mathbb{C}\backslash \{0\}$. Now from (\ref{e4.1}), we have
\begin{align}
\label{e4.3} c_{st}=\frac{\partial_{z_sz_t}^2(f)(\partial_{z_t}(f)-f)}{(f-b)(\partial_{z_t}(f)-a)}.
\end{align}

Since $\partial_{z_t}(f)=a\Rightarrow f=a$ and $f=b\Leftrightarrow \partial_{z_t}(f)=b$, it follows from (\ref{e4.3}) that $\partial_{z_sz_t}^2(f)$ has no zeros. Consequently both $f-b$ and $\partial_{z_t}(f)-b$ have only simple zeros. Hence $f=b\rightleftharpoons \partial_{z_k}(f)=b$. Now by Lemma \ref{ln2.11}, we have
\begin{align}\label{e4.4}
\partial_{z_t}(f)-b=A_t(f-b)
\end{align}
for some non-zero constant $A_t$.

\smallskip
First we suppose that $a$ is not a Picard exceptional of $\partial_{z_t}(f)-a$. Let $z_0$ be a zero of $\partial_{z_t}(f)-a$. Since $\partial_{z_t}(f)=a\Rightarrow f=a$, we have $f(z_0)=a$ and so from (\ref{e4.4}), we obtain $a-b=A_t(a-b)$. Since $a\neq b$, we get $A_t=1$ and so (\ref{e4.4}) yields $f\equiv \partial_{z_t}(f)$, which is impossible.
 
\smallskip
Next we suppose that $a$ is a Picard exceptional of $\partial_{z_t}(f)-a$. So we may take 
\begin{align}\label{e4.5}
\partial_{z_t}(f)=e^{Q_t}+a,
\end{align} 
where $Q_t(z)$ is a polynomial in $\mathbb{C}^n$ such that $\deg(Q_t)=1$. Now substituting (\ref{e4.5}) into (\ref{e4.4}), we deduce that
\begin{align}\label{e4.6}
f=\frac{1}{A_t}e^{Q_t}+b+\frac{a-b}{A_t}.
\end{align}

It is clear from (\ref{e4.6}) that 
\begin{align}\label{e4.7}
\partial_{z_t}(f)=\frac{\partial_{z_t}(Q_t)}{A_t}e^{Q_t}.
\end{align}

Now comparing (\ref{e4.5}) and (\ref{e4.6}), we get
\begin{align*}
\left(1-\frac{\partial_{z_t}(Q_t)}{A_t}\right)e^{Q_t}=-a.
\end{align*}

Since $a\neq 0$, we get a contradiction from above.

\smallskip
{\bf Case 2.} Suppose $\varphi_{ji}\equiv 0$ for all $i,j\in\{1,2,\ldots,n\}$. Then from (\ref{e4.1}), we have either $f=\partial_{z_i}(f)$ for all $i=1,2,\ldots,n$ or $\partial^2_{z_jz_i}(f)\equiv 0$ for all $i,j\in\{1,2,\ldots,n\}$. If $\partial^2_{z_jz_i}(f)\equiv 0$ for all $i,j\in\{1,2,\ldots,n\}$, then by Lemma \ref{ln2.12}, we get a contradiction. Hence $f=\partial_{z_i}(f)$ for all $i=1,2,\ldots,n$. In this case also, we have
\begin{align*}
f(z_1,\ldots,z_n)=ce^{z_1+z_2+\ldots+z_n}, \;\;c\in\mathbb{C}\backslash \{0\}.
\end{align*}

\end{proof}

\begin{proof}[{\bf Proof of Theorem \ref{t1.3}}] 
By the given conditions, we have $\partial_{z_i}(f)=0\Rightarrow f=0$ and $f=b\rightleftharpoons \partial_{z_i}(f)=b$ for all $i=1,2,\ldots,n$. Now proceeding in the same way as done in the proof of Theorem \ref{t1.2}, we can easily prove that $\rho(f)\leq 1$.
Then by Lemma \ref{ln2.11}, we have
\begin{align}\label{e5.1}
\partial_{z_i}(f)-b=A_i(f-b),
\end{align}
where $A_i$ is a constant in $\mathbb{C}\backslash \{0\}$ for $i=1,2,\ldots,n$.
We now consider the following two cases.

\smallskip
{\bf Case 1.} Suppose $A_i=1$ for all $i=1,2,\ldots,n$. Then from (\ref{e5.1}), we have $f\equiv \partial_{z_i}(f)$ for $i=1,2,\ldots,n$.
In this case also, we have
\begin{align*}
f(z_1,\ldots,z_n)=ce^{z_1+z_2+\ldots+z_n},\;\;c\in\mathbb{C}\backslash \{0\}.
\end{align*}

\smallskip
{\bf Case 2.} Suppose $A_i\neq 1$ for atleast one $i\in\{1,2,\ldots,n\}$. For the sake of simplicity, we may assume that $A_t\neq 1$, where $t\in\{1,2,\ldots,n\}$.

First we suppose that $0$ is not a Picard exceptional of $\partial_{z_t}(f)$. Let $z_0$ be a zero of $\partial_{z_t}(f)$. Since $\partial_{z_t}(f)=a\Rightarrow f$, we have $f(z_0)=0$ and so from (\ref{e5.1}), we obtain $0-b=A_t(0-b)$. Since $0\neq b$, we get $A_t=1$, which is impossible.
 
\smallskip
Next we suppose that $0$ is a Picard exceptional of $\partial_{z_t}(f)$.
So we may take 
\begin{align}\label{e5.2}
\partial_{z_t}(f(z))=c_te^{a_{t1}z_1+a_{t2}z_2+\ldots+a_{tn}z_n},
\end{align} 
where $a_{t1},a_{t2},\ldots,a_{tn}$ and $c_t\neq 0$ are constants such that $(a_{t1},a_{2t},\ldots,a_{tn})\neq (0,0,\ldots,0)$. Now substituting (\ref{e5.2}) into (\ref{e5.1}), we deduce that
\begin{align}\label{e5.3}
f(z)=\frac{c_t}{A_t}e^{a_{t1}z_1+a_{t2}z_2+\ldots+a_{tn}z_n}+b-\frac{b}{A_t}.
\end{align}

If possible, suppose $0$ is not a Picard exceptional of $\partial_{z_i}(f)$, where $i=1,2,\ldots,n$ such that $i\neq t$. Then one can easily prove that $A_i=1$, where $i=1,2,\ldots,n$ such that $i\neq t$ and so $f\equiv \partial_{z_i}(f)$, where $i=1,2,\ldots,n$ such that $i\neq t$. In this case from (\ref{e5.3}), we get a contradiction. Hence $0$ is not a Picard exceptional of $\partial_{z_i}(f)$ for all $i=1,2,\ldots,n$. Consequently (\ref{e5.3}) holds for $t=1,2,\ldots,n$. For the uniqueness of $f(z)$, we may assume that
\begin{align}\label{e5.4}
f(z)=\frac{c}{A}e^{a_{1}z_1+a_{2}z_2+\ldots+a_{n}z_n}+b-\frac{b}{A},
\end{align}
where $a_{1},a_{2},\ldots,a_{n}$ and $c$ are non-zero constants. Also from (\ref{e5.2}), we have
\begin{align}\label{e5.5}
\partial_{z_i}(f(z))=ce^{a_{1}z_1+a_{2}z_2+\ldots+a_{n}z_n},
\end{align} 
where $i=1,2,\ldots,n$.
Again from (\ref{e5.4}), we get 
\begin{align}\label{e5.6}
\partial_{z_i}(f(z))=\frac{a_ic}{A}e^{a_1z_1+a_2z_2+\ldots+a_nz_n},
\end{align}
where $i=1,2,\ldots,n$.
Now comparing (\ref{e5.5}) and (\ref{e5.6}), we get
\begin{align*}
c\left(1-\frac{a_i}{A}\right)e^{a_1z_1+a_2z_2+\ldots+a_nz_n}=0,
\end{align*}
which shows that $a_i=A$ for $i=1,2,\ldots,n$. Finally from (\ref{e5.4}), we have 
\begin{align}\label{e5.4}
f(z)=\frac{c}{A}e^{A(z_1+z_2+\ldots+z_n)}+b-\frac{b}{A},
\end{align}
where $A$ and $c$ are non-zero constants.
\end{proof}

\medskip
{\bf Statements and declarations:}

\smallskip
\noindent \textbf {Conflict of interest:} The authors declare that there are no conflicts of interest regarding the publication of this paper.

\smallskip
\noindent{\bf Funding:} There is no funding received from any organizations for this research work.

\smallskip
\noindent \textbf {Data availability statement:}  Data sharing is not applicable to this article as no database were generated or analyzed during the current study.

\end{document}